\documentclass[12pt]{article}
\usepackage{CJK}
\openup6pt
\usepackage{geometry}                
\geometry{letterpaper}                   
\usepackage{graphicx}
\usepackage{amssymb}
\usepackage{amsmath}
\usepackage{color}
\usepackage[titletoc]{appendix}
\usepackage{tikz}
\usepackage[numbers]{natbib}
\usepackage{hyperref}
\usepackage{algorithm}
\usepackage{algorithmic}
\usetikzlibrary{positioning}
\usetikzlibrary{arrows}
\newtheorem{remark}{Remark}

\newtheorem{theorem}{Theorem}[section]
\newtheorem{corollary}{Corollary}[section]
\newtheorem{lemma}{Lemma}[section]

\newenvironment{proof}{{\noindent\it Proof}\quad}{\hfill $\square$\par} 
\numberwithin{figure}{section}
\numberwithin{equation}{section}
\newcommand{\relu}{\mbox{{\rm ReLU}}}

\begin{document}

\title{ReLU Deep Neural Networks \\and Linear Finite Elements}
\author{Juncai He
	\thanks{School of Mathematical Sciences, Peking University, Beijing 100871, China. Email: juncaihe@pku.edu.cn}
	\and
	Lin Li
	\thanks{Beijing International Center for Mathematical Research, Peking University, Beijing 100871, China. Email: lilin1993@pku.edu.cn}
	\and 
	Jinchao Xu
	\thanks{Department of Mathematics, Pennsylvania State University, State College, PA 16802, USA. Email: xu@math.psu.edu}
	\and Chunyue Zheng
	\thanks{Department of Mathematics, Pennsylvania State University, State College, PA 16802, USA. Email: cmz5199@psu.edu}
}
  \date{}                                        

\maketitle
\begin{abstract}
	In this paper, we investigate the relationship between deep neural
	networks (DNN) with rectified linear unit (ReLU) function as the
	activation function and continuous piecewise linear (CPWL)
	functions, especially CPWL functions from the simplicial linear
	finite element method (FEM). 
	We first consider the special case of FEM.  By exploring the DNN
	representation of its nodal basis functions, 
	we present a ReLU DNN representation of CPWL in FEM. We
	theoretically establish that at least $2$ hidden layers are needed
	in a ReLU DNN to represent any linear finite element functions in
	$\Omega \subseteq \mathbb{R}^d$ when $d\ge2$.  Consequently, for
	$d=2,3$ which are often encountered in scientific and engineering
	computing, the minimal number of two hidden layers are necessary and
	sufficient for any CPWL function to be represented by a ReLU
	DNN. Then we include a detailed account on how a general CPWL in
	$\mathbb R^d$ can be represented by a ReLU DNN with at most
	$\lceil\log_2(d+1)\rceil$ hidden layers and we also give an
	estimation of the number of neurons in DNN that are needed in such a
	representation.  Furthermore, using the relationship between DNN and
	FEM, we theoretically argue that a special class of DNN models with
	low bit-width are still expected to have an adequate representation
	power in applications.  Finally, as a proof of concept, we present
	some numerical results for using ReLU DNNs to solve a two point
	boundary problem to demonstrate the potential of applying DNN for
	numerical solution of partial differential equations.
\end{abstract}

\section{Introduction}
\label{sec:into}

In recent years, deep learning models have achieved unprecedented
success in various tasks of machine learning or artificial
intelligence, such as computer vision, natural language processing and
reinforcement learning \cite{lecun2015deep}.  One main technique in
deep learning is deep neural network.  A typical DNN model is based on
a hierarchy of composition of linear functions and a given nonlinear
activation function.  However, why DNN models can work so well is
still unclear.  

Mathematical analysis of DNN can be carried out using many different
approaches.  One approach is to study the approximation properties of the
function class provided by DNN.   The approximation property of DNN is
relevant to the so-called expressive power \cite{cohen2016expressive} of a DNN model. 
Early studies of approximation properties of DNN can be traced back in
\cite{hornik1989multilayer} and \cite{cybenko1989approximation}
where the authors established some approximation properties for the
function classes given by a feedforward neural network with a single
hidden layer.  
Further error estimates for such neural networks in terms of number of
neurons can be found in \cite{jones1992simple} for sinusoidal
activation functions and in \cite{barron1993universal} for more
general sigmoidal activation functions.  There are many other papers
on this topic during the 90s and a good review of relevant works can
be found in \cite{ellacott1994aspects} and
\cite{pinkus1999approximation}.

There are many different choices of activation functions.  In fact, as
shown in \cite{leshno1993multilayer}, a neural network with a single
hidden layer can approximate any continuous function for any
activation function which is not a polynomial.  Among all the
activation functions, the so-called rectified linear unit (ReLU)
activation function \cite{nair2010rectified}, namely
$\relu(x)=\max(x,0)$, has emerged to be one of the most popular
activation functions used in the deep learning literature and
applications.  \cite{shaham2016provable} presents an approximation of
ReLU DNNs by relating to wavelets.  Recently,
\cite{klusowski2018approximation} establish $L^{\infty}$ and $L^2$
error bounds for functions of many variables that are approximated by
linear combinations of ReLU. \cite{zhou2018universality} presents
rates of approximation by deep CNNs for functions in the Sobolev space
$H^r(\Omega)$ with $r > 2 + d/2$.  This paper is devoted to some further
mathematical analysis of DNN models with ReLU as the activation
function.

It is not difficult to see that the following statement is true:
``{\it
	Every ReLU DNN function in $\mathbb R^d$ represents a continuous
	piecewise linear (CPWL) function defined on a number of polyhedral
	subdomains.}''
One important recent development is that the converse of the above
statement has also been proven true.  More specifically, the following result is
established by \cite{arora2016understanding} based on an earlier result
by \cite{tarela1999region} on lattice representation of DNN:
``{\it  Every CPWL function in $\mathbb{R}^d$ can be represented by a ReLU
	DNN model with at most $\lceil\log_2(d+1)\rceil$ hidden layers.}''

Motivated by this result, we study the following two questions on the DNN
representation of a given CWPL function:
\begin{enumerate}
	\item How many numbers of neurons are needed?
	\item What is the minimal number of layers that are needed?
\end{enumerate}

To answer the first question, in this paper, we will go through the
proof of this representation result to give some explicit estimations
of the number of neurons that are needed in a DNN to represent a given
CPWL function.  As a result, we find that the number of neurons that
are needed for a DNN to represent a CPWL on $m$-subdomains can be as
large as $\mathcal O(d2^{mm!})$!

In order to obtain DNN representation with fewer numbers of neurons, in
this paper, we consider a special class of CPWL functions, namely the
linear finite element (LFE) functions \cite{ciarlet2002finite}
defined on a collection of special subdomains, namely simplexes in
$\mathbb R^d$.  As every finite element function can be written as a
linear combination of nodal basis functions, it suffices to study DNN
representation of any given nodal basis function.  To represent a
nodal basis function by a DNN, we do not need to consider the
complicated domain partition related with lattice
representation(\cite{tarela1999region}), which is important in
representing general piecewise linear functions in
$\mathbb{R}^d$(~\cite{arora2016understanding}).  We prove that a
linear finite element function with $N$ degrees of freedom can be
represented by a ReLU DNN with at most $\mathcal O(d\kappa^dN)$ number of
neurons with $\mathcal O(d)$ hidden layers where $\kappa\ge 2$)
depends on the shape regularity of the underlying finite element
grid. 

To answer the second question, we will again consider the linear
finite element functions.  In this paper, we will show (see Theorem~\ref{lowerbound}) that at least $2$ hidden layers are needed for a ReLU DNN to
represent any linear finite element function in a bounded domain $\Omega
\subset \mathbb{R}^d$ or $\mathbb{R}^d$ when $d\ge2$.  The
$\lceil\log_2(d+1)\rceil$ number of hidden layers is also optimal for
$d=2, 3$.  Whether this number is also optimal for $d>3$ is still an
open problem.

In real applications, many efforts have been made to compress the deep
neural networks by using heavily quantized weights,
c.f.~\cite{yin2018binaryrelax}.  Especially, binary and ternary
weight models not only give high model compression rate, but also
eliminate the need of most floating-point multiplications during
interface phase. In particular, for some small data sets such as MINST
\cite{lecun1998gradient} and CIFAR-10 \cite{krizhevsky2009learning},
the ternary CNNs \cite{li2016ternary} are shown to have the same
accuracy as that of the original CNN.  Using the special structure for
representing any CPWL functions by ReLU DNNs, we provide certain
theoretical justification of the use of ternary CNNs.  Furthermore, we
also present a modified version of those models with some rigorous
mathematical justifications. 

Another topic that will be investigated in the paper is the
application of artificial neural networks for differential equations.
This topic can be traced back to \cite{meade1994numerical,
	meade1994solution, gobovic1994analog} in which collocation methods
are studied.  Recently, there are increased new research interests in the
literature for the application of deep neural networks for numerical
approximation of nonlinear and high dimensional PDEs as in
\cite{han2017overcoming, weinan2017deep, khoo2017solving}.  Based on
our result about the relationship between FEM and ReLU DNNs, we
discuss the application of ReLU DNN for solving PDEs with
respect to the convergence properties. In particular, we use an 1D
example to demonstrate that a Galerkin method using ReLU DNN 
can lead to better approximation result than adaptive finite element method
that has exactly the same number of degrees of freedom as in the ReLU
DNN. 

The remaining sections are organized as follows. In \S\ref{sec:reludnn_CPWL}, we introduce some notation and preliminary
results about ReLU DNNs. In \S\ref{sec:FEM_DNN} we investigate
the relationship between FEM and ReLU DNN. In \S\ref{sec:FEM_DNN1} we prove that at least 2 hidden layers are needed
to represent any LFE functions by ReLU DNN in $\Omega \subset
\mathbb{R}^d$ for $d\ge 2$.  In \S\ref{sec:cpwl}, we give the
self contained proof of representing CPWL and LFE functions with
$\lceil\log_2(d+1)\rceil$ hidden layers and give the size estimation.
In \S\ref{sec:specialstruc} we show that a special structure of
ReLU DNN can also recover any CPWL function.  In \S\ref{sec:appliPDE} we investigate the application of DNN for numerical
PDEs. In \S\ref{sec:conclusion} we give concluding remarks.

\section{Deep neural network (DNN) generated by ReLU}
\label{sec:reludnn_CPWL}
In this section, we briefly discuss the definition and properties of
the deep neural networks generated by using ReLU as the activation
function.

\subsection{General DNN}
Given $n, m\ge 1$, the first ingredient in defining a deep neural
network (DNN) is (vector) linear functions of the form
\begin{equation}\label{thetamap}
	\Theta:\mathbb{R}^{n}\to\mathbb{R}^{m} ,
\end{equation}as $\Theta(x)=Wx+b$ where
$W=(w_{ij})\in\mathbb{R}^{m\times n}$, $b\in\mathbb{R}^{m}$.

The second main ingredient is a nonlinear activation function, usually
denoted as 
\begin{equation}\label{sigma}
	\sigma: \mathbb{R} \to \mathbb{R}.
\end{equation} 
By applying the function to each component, we can extend this
naturally to 
$$
\sigma:\mathbb R^{n}\mapsto \mathbb R^{n}.
$$
Given $d,c, k\in\mathbb{N}^+$ and  
$$
n_1,\dots,n_{k}\in\mathbb{N} \mbox{ with }n_0=d, n_{k+1}=c, 
$$
a general DNN from $\mathbb{R}^d$ to $\mathbb{R}^c$ is given by
\begin{align*}
	f(x) &= f^k(x), \\
	f^{\ell}(x) &= [ \Theta^{\ell} \circ \sigma](f^{\ell-1}(x)) \quad \ell = 1:k,
\end{align*}
with $f^{0}(x) = \Theta (x)$. 
The following more concise notation is often used in computer science literature:
\begin{equation}
	\label{compress-dnn}
	f(x) = \Theta^{k}\circ \sigma \circ \Theta^{k-1} \circ \sigma \cdots \circ \Theta^1 \circ \sigma \circ \Theta^0(x),
\end{equation}
here $\Theta^i: \mathbb{R}^{n_{i}}\to\mathbb{R}^{n_{i+1}}$ are linear
functions as defined in \eqref{thetamap}.  Such a DNN is called a $(k+1)$-layer DNN, and is said
to have $k$ hidden layers. Unless otherwise stated, all layers mean hidden layers 
in the rest of this paper. The size of this DNN is
$n_1+\cdots+n_k$.  In this paper, we mainly consider a special activation function, known as the {\it rectified linear unit}
(ReLU), and defined as $\relu: \mathbb R\mapsto \mathbb R$,
\begin{equation}
	\label{relu}
	\relu(x)=\max(0,x), \quad x\in\mathbb{R}. 
\end{equation}
A ReLU DNN with $k$ hidden layers might be written as:
\begin{equation}
	\label{relu-dnn}
	f(x) = \Theta^{k}\circ \relu \circ \Theta^{k-1} \circ \relu \cdots \circ \Theta^1 \circ \relu \circ \Theta^0(x).
\end{equation}

We note that $\relu$ is a continuous piecewise linear (CPWL) function.
Since the composition of two CPWL functions is obviously still a CPWL
function, we have the following simple observation(\cite{arora2016understanding}).
\begin{lemma}\label{dnn-cpwl}
	Every ReLU DNN: $\mathbb{R}^d\to\mathbb{R}^c$ is a continuous
	piecewise linear function.  More specifically, given any ReLU DNN,
	there is a polyhedral decomposition of $\mathbb R^d$ such that this
	ReLU DNN is linear on each polyhedron in such a decomposition.
\end{lemma}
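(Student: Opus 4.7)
The plan is to prove the lemma by induction on the number of hidden layers $k$, exploiting the stability of the class of CPWL functions under the two basic operations that build a ReLU DNN: composition with an affine map and componentwise application of $\relu$.

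First I would dispose of the base case $k=0$: the map $f=\Theta^0$ is affine, hence trivially CPWL with the single polyhedral piece $\mathbb{R}^d$. Next I would set up the inductive hypothesis that $f^{k-1}$ is CPWL with an associated finite polyhedral decomposition $\{P_i\}_{i\in I}$ of $\mathbb{R}^d$ on each of whose pieces $f^{k-1}|_{P_i}(x)=W_i x+b_i$ is affine. The inductive step then proceeds in two parts, one for $\relu$ and one for $\Theta^k$.

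For the $\relu$ step, fix a piece $P_i$ and consider $\relu(W_i x+b_i)$. For each output coordinate $j=1,\dots,n_k$, the half-space $H_{i,j}^{+}=\{x:(W_i x+b_i)_j\ge 0\}$ and its complement $H_{i,j}^{-}$ are polyhedra, and on $H_{i,j}^{+}$ the $j$-th component of $\relu(W_i x+b_i)$ equals the affine function $(W_i x+b_i)_j$, while on $H_{i,j}^{-}$ it equals $0$. Intersecting $P_i$ with one choice of sign for every $j$ produces at most $2^{n_k}$ sub-polyhedra, and on each such sub-polyhedron $\relu\circ f^{k-1}$ is affine because each output coordinate is. Taking the union over $i\in I$ gives a refined finite polyhedral decomposition of $\mathbb{R}^d$ on which $\relu\circ f^{k-1}$ is piecewise affine. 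Composing with the affine map $\Theta^k$ preserves affineness on each piece, which completes the inductive step.

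Continuity is automatic because ReLU and all affine maps are continuous and compositions of continuous functions are continuous, so the piecewise affine function obtained at the end agrees on the shared boundaries of neighboring polyhedra. The only mildly subtle point—and the part that needs to be handled carefully rather than being a genuine obstacle—is verifying that the finite intersections of half-spaces and the pieces $P_i$ really are polyhedra again and that finiteness of the decomposition is preserved, since $|I|$ can grow by a factor of $2^{n_k}$ at each layer. Once that bookkeeping is in place the induction closes and yields both claims of the lemma simultaneously.
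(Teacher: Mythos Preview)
Your proof is correct and is essentially a rigorous expansion of what the paper does: the paper merely observes that $\relu$ is CPWL and that the composition of CPWL functions is again CPWL, citing \cite{arora2016understanding} without further detail. Your induction on the number of hidden layers, with the explicit refinement of the polyhedral decomposition by intersecting each piece with the $2^{n_k}$ sign patterns of the affine coordinates, is exactly how one makes that one-line observation precise, so there is no genuine difference in approach.
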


Here is a simple example for the ``grid" created by some 2-layer ReLU DNNs in $\mathbb{R}^2$.

\begin{figure}[ht]
	\includegraphics[width=.3\textwidth]{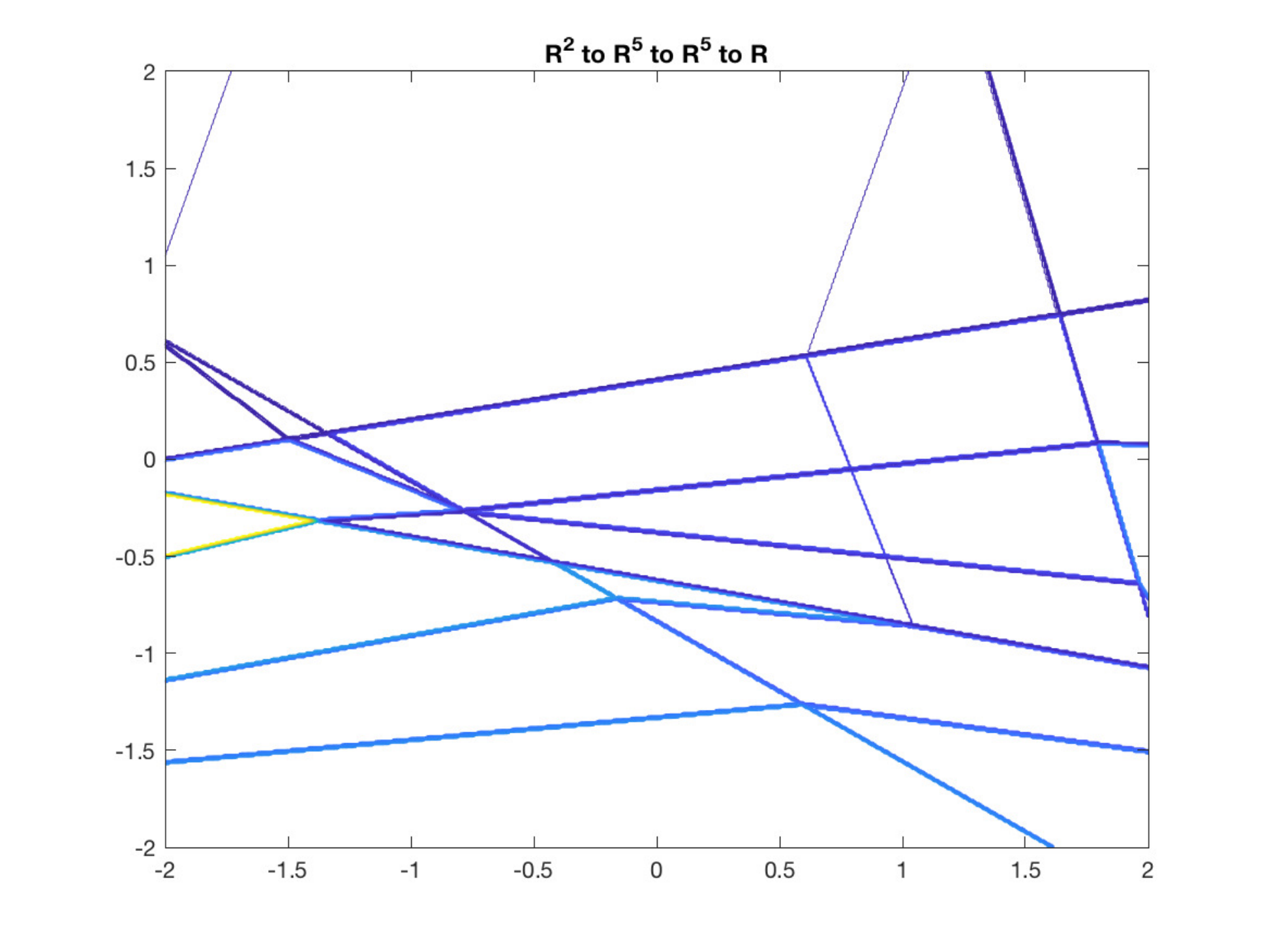}  
	\includegraphics[width=.3\textwidth]{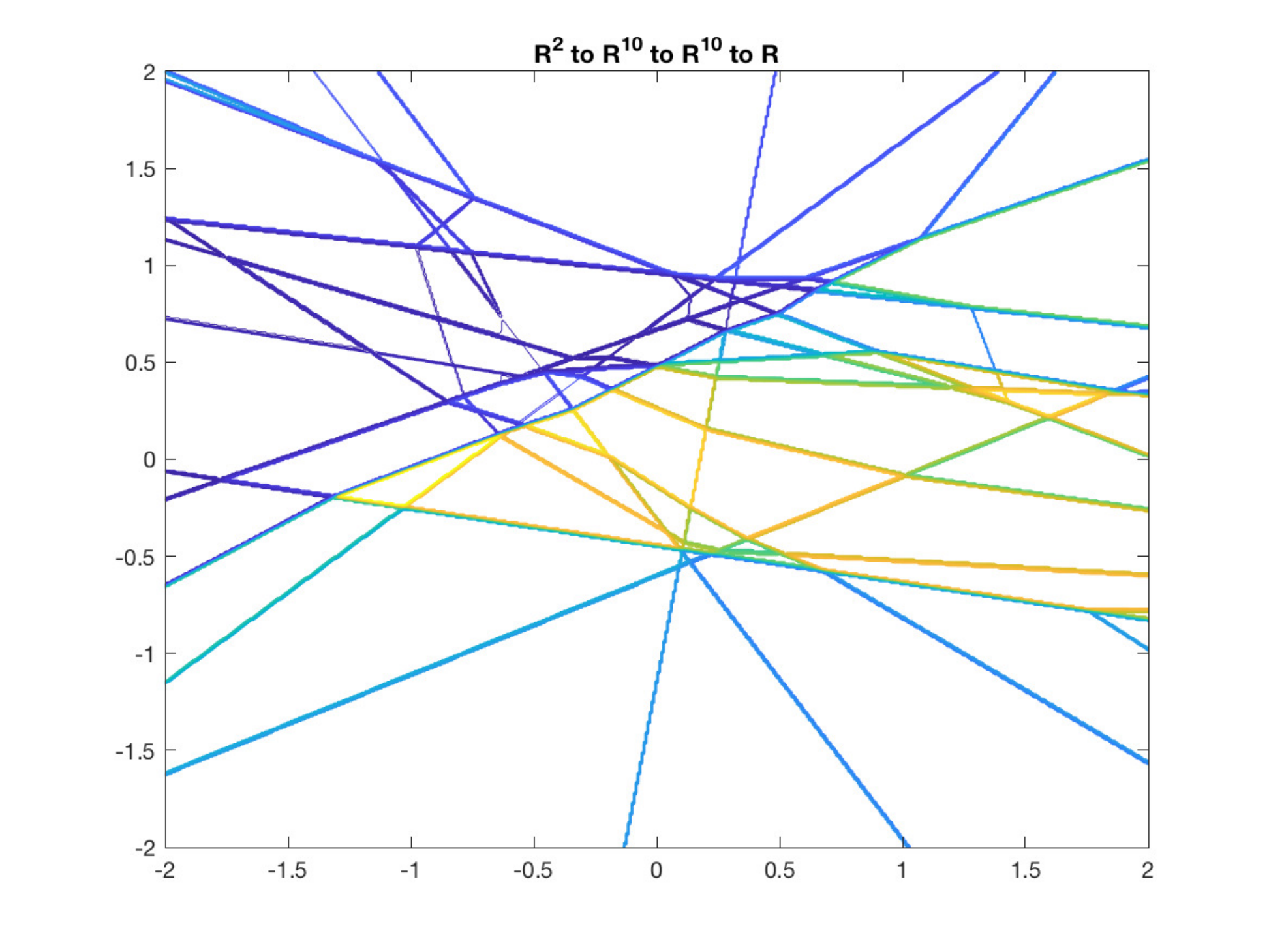}  
	\includegraphics[width=.3\textwidth]{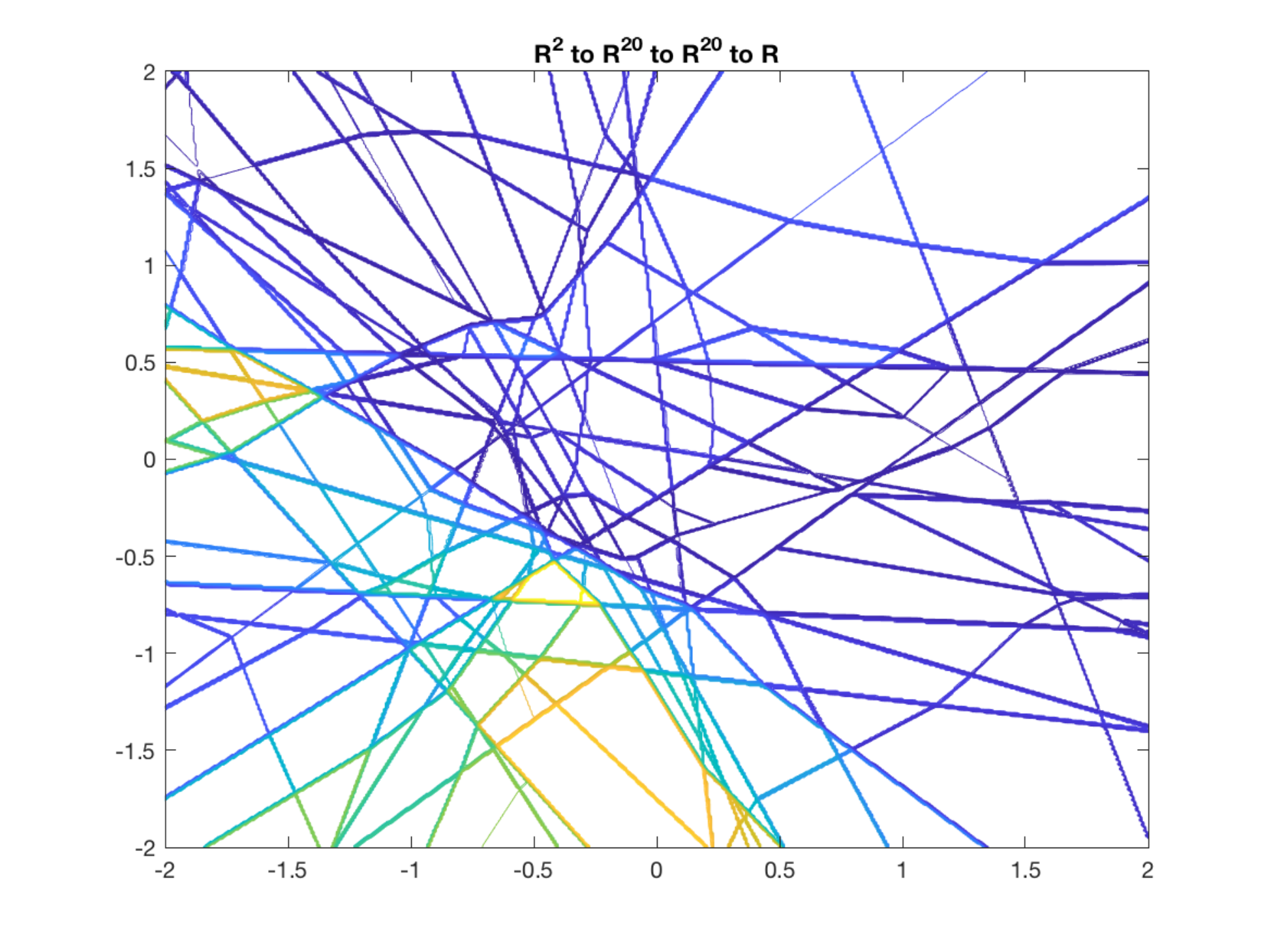}  
	\caption{Projections of the domain partitions formed by 2-layer ReLU DNNs with sizes $(n_0, n_1, n_2, n_3)= (2, 5, 5, 1), (2, 10, 10, 1) \text{and} (2, 20, 20, 1)$ with random parameters.}
	\label{fig:dnn-region}
\end{figure}

In the rest of the paper, we will use the terminology of CPWL to
define the class of functions that are globally continuous and locally
linear on each polyhedron in a given finite polyhedral decomposition of
$\mathbb R^d$. 

For convenience of exposition,  we introduce the following notation:
\begin{equation}
	\begin{aligned}
		{\rm{DNN}_J} :=\{& f:f=
		\Theta^J \circ \relu\circ \Theta^{J-1} \cdots \relu\circ \Theta^0(x), \\
		&\Theta^\ell \in \mathbb{R}^{n^{\ell+1} \times (n^\ell+1)}, \quad n^0 = d, \quad n^{J+1} = 1, \quad n^\ell \in \mathbb{N}^+\}.
	\end{aligned}
\end{equation}
Namely ${\rm{DNN}_J}$ represents the DNN model with $J$ hidden layers and
ReLU activation function with arbitrary size. 

%

\subsection{A shallow neural network ${\rm DNN}_1$} 
We note that for $J=0$, ${\rm DNN}_0$ is a simple function space of
global linear functions, which is often used in classic statistical
analysis such as linear regression.  The structure of ${\rm DNN}_J$
gets more interesting as $J$ becomes larger. We shall now
discuss the simple case when $J=1$, namely  
\begin{equation}
	\label{DNN1}
	{\rm DNN}_1^m=
	\left\{
	f: f=\sum_{i=1}^m \alpha_i\relu(w_i x+b_i)+\beta
	\right\},
\end{equation}
where $\alpha_i, b_i, \beta \in \mathbb{R}, w_i\in \mathbb{R}^{1\times d}$, for $i=1,2,...,m$.
Here we introduce the superscript $m$ to denote the number of
neurons. This simple neural network already has rich mathematical
structures and approximation properties. 
Given a bounded domain $\Omega\subset \mathbb R^d$, we introduce the
following notation
\begin{equation}
	\label{DNN1Omega}
	{\rm DNN}_1^m(\Omega)=
	\left\{
	f: f(x) \in {\rm DNN}_1^m, \quad  x\in \Omega \subset \mathbb R^d
	\right\},
\end{equation}
as the restriction of ${\rm DNN}_1^m$ on $\Omega$.

Approximation property for the function class ${\rm DNN}_1^m(\Omega)$
has been much studied in the literature.  For example, in
\cite{hornik1989multilayer} and \cite{cybenko1989approximation},
${\rm DNN}_{1}^m(\Omega)$ is proved to be dense in $C^0(\Omega)$ as $m
\to \infty$, which is known as universal approximation. There are also many works devoted to the asymptotic
error estimates.  For example,  \cite{barron1993universal}
established the following estimate:
\begin{equation}
	\label{Barron0}
	\inf_{g \in {\rm DNN}_{1}^m(\Omega)} \|f - g\|_{0, 2, \Omega} \lesssim
	|\Omega|^{1/2} m^{-{1\over2}}
	\int_{\mathbb R^d}|\omega|_\Omega|\hat f(\omega)|d\omega,
\end{equation}
where $|\Omega|$ denotes the volume of $\Omega$ and 
$$
|\omega|_\Omega=\sup_{x\in\Omega}|\omega\cdot (x-x_{\Omega})|,
$$
for some point $x_\Omega\in \Omega$.

For a given set of $w^i$ and $b^i$, it is tempting to think the
functions in ${\rm DNN}^m_1$ are generated by $\{\relu(w_i
x+b_i)\}_{i=1}^m$.  In such a consideration, the following result is of great theoretical interest. 
The proof will be seen in Section~\ref{sec:FEM_DNN1}.
\begin{theorem}\label{linearindep}
	$\{\relu(w_ix+b_i)\}_{i=1}^m$ are linearly independent if $(w_i,
	b_i)$ and $(w_j, b_j)$ are linearly independent in
	$\mathbb{R}^{1\times (d+1)} $ for any $i \neq j$.
\end{theorem}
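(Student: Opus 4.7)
The plan is to exploit the geometric fact that each $\relu(w_ix+b_i)$ has a ``kink'' exactly along the hyperplane $H_i=\{x:w_ix+b_i=0\}$ (assuming $w_i\ne 0$), and that the hypothesis forces these hyperplanes to be pairwise distinct. The heart of the argument: distinct hyperplanes cannot cancel each other's kinks, so a vanishing linear combination must have all zero coefficients.

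First I would dispose of degenerate cases. If $w_i=0$ then $\relu(w_ix+b_i)=\max(0,b_i)$ is constant; the hypothesis allows at most one such index (since two pairs $(0,b_i)$ and $(0,b_j)$ with $b_i,b_j\ne 0$ would be scalar multiples), and such a term can be dispatched separately by inspection. So I focus on the generic case $w_i\ne 0$ for all $i$.

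For each fixed $i$, I would first claim there exists $x_0\in H_i\setminus\bigcup_{j\ne i}H_j$. Since $(w_i,b_i)$ and $(w_j,b_j)$ are not scalar multiples, $H_i\ne H_j$, so $H_i\cap H_j$ is an affine subspace of $H_i$ of dimension at most $d-2$. A finite union of such proper subspaces cannot cover the $(d-1)$-dimensional hyperplane $H_i$, so $x_0$ exists. I then take a small open ball $B$ around $x_0$ that meets $H_i$ but is disjoint from the $H_j$ for $j\ne i$. On $B$, each $\relu(w_jx+b_j)$ with $j\ne i$ restricts to an affine function (either $w_jx+b_j$ or $0$, depending on the sign). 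Assuming $\sum_i\alpha_i\relu(w_ix+b_i)\equiv 0$, I would then have $\alpha_i\relu(w_ix+b_i)$ equal to an affine function on $B$. But the gradient of $\relu(w_ix+b_i)$ jumps by $w_i\ne 0$ across $H_i\cap B$, while an affine function has constant gradient, so $\alpha_iw_i=0$ and hence $\alpha_i=0$. Doing this for each $i$ completes the proof.

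The subtlest step is the existence of $x_0$: this is precisely where the linear-independence hypothesis is used essentially, as it ensures $H_i\ne H_j$ and thus that $H_i\cap H_j$ has positive codimension inside $H_i$. Everything else reduces to the elementary observation that a ReLU creates a gradient jump across its kink hyperplane that no affine function (and hence no finite sum of ReLUs smooth on $B$) can mask.
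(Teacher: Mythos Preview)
Your argument is correct and follows essentially the same idea as the paper: both exploit that the gradient of $\relu(w_ix+b_i)$ is discontinuous exactly along the hyperplane $H_i=\{w_ix+b_i=0\}$, and that the pairwise linear-independence hypothesis makes these hyperplanes distinct so no cancellation of kinks is possible. The paper packages this via the global discontinuity-set machinery developed in the proof of Theorem~\ref{lowerbound} (showing $D_{\nabla f}\neq\emptyset$ for any nontrivial combination, contradicting $f\equiv 0$), whereas you give a self-contained local version by isolating a point of $H_i\setminus\bigcup_{j\ne i}H_j$ and reading off $\alpha_i=0$ from the gradient jump there; the content is the same.

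One small caveat on your degenerate-case discussion: if $w_i=0$ and $b_i<0$ then $\relu(w_ix+b_i)\equiv 0$, which would make the family linearly dependent even though the hypothesis can still be satisfied (e.g.\ $(0,-1)$ is linearly independent from any $(w_j,b_j)$ with $w_j\ne 0$). This is really an edge case in the theorem's statement rather than in your method, and the paper's proof tacitly assumes $w_i\ne 0$ as well; but ``dispatched by inspection'' should be sharpened or the case $w_i=0$, $b_i\le 0$ explicitly excluded.
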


In real applications, $w_i$ and $b_i$ are variables. 
As a result, 
${\rm  DNN}^m_1$ 
is generated by variable basis functions
$\{\relu(w_i x+b_i)\}_{i=1}^m$ and in particular   ${\rm  DNN}^m_1$
is a nonlinear space which is expected to have certain nonlinear
approximation property as discussed in 
\cite{devore1998nonlinear}.

\section{Linear finite element (LFE) function as a DNN}\label{sec:FEM_DNN}
In this section, we consider a special CPWL function space, namely the
space of linear simplicial finite element functions. We will first give a brief
description of finite element method and give a constructive
proof that any linear simplicial finite element function can be
represented by a ReLU DNN.  

\subsection{Linear finite element spaces}\label{sec:FEM}
The finite element method (FEM), as a popular numerical method for
approximating the solutions of partial differential equations (PDEs),
is a well-studied subject (\cite{ciarlet2002finite},\cite{brenner2007mathematical}).  The finite element function space is
usually a subspace of the solution space, for example, the space of piecewise
linear functions over a given mesh. In \cite{arora2016understanding}, it is shown that piecewise linear functions can be written as ReLU
DNNs, which will be discussed in details later. By exploring the relationship between FEM and ReLU DNN, we
hope to shed some new light on how DNN works in this special case.

Assuming that $\Omega \subset \mathbb{R}^d$ is a bounded domain. We consider a
special finite element function class consisting of CPWL functions
with respect to a simplicial partition of $\Omega$.  Such simplicial
partitions are often known as finite element grids or meshes.  Some
typical finite element grids are shown in
Figure~\ref{typicalfemgrid123} for $d=1, 2, 3$.
\begin{figure}[htbp]
	\centering
	\begin{minipage}[t]{0.3\textwidth}
		\centering
		\includegraphics[width=5cm]{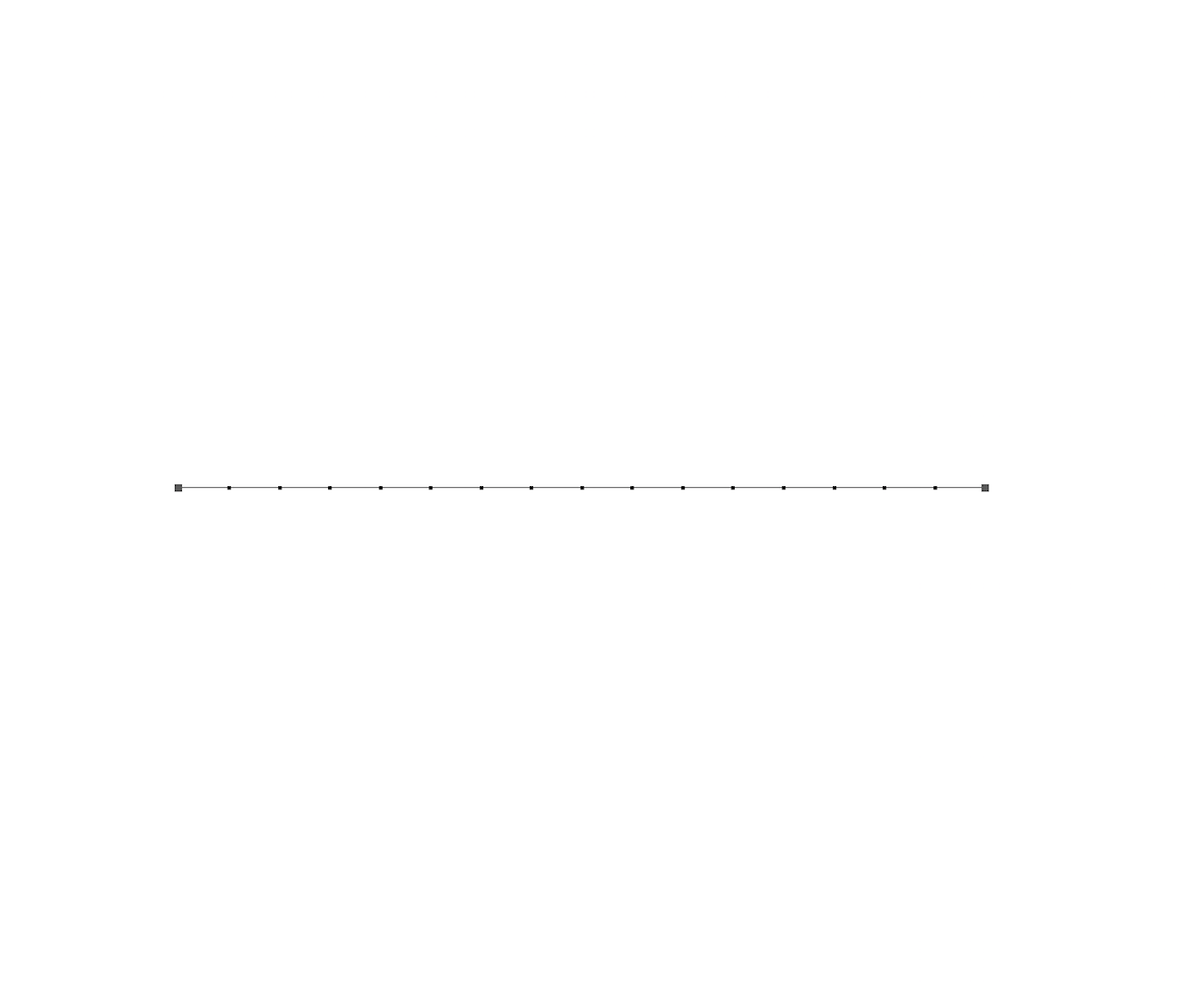}
	\end{minipage}
	\begin{minipage}[t]{0.3\textwidth}
		\centering
		\includegraphics[width=4cm]{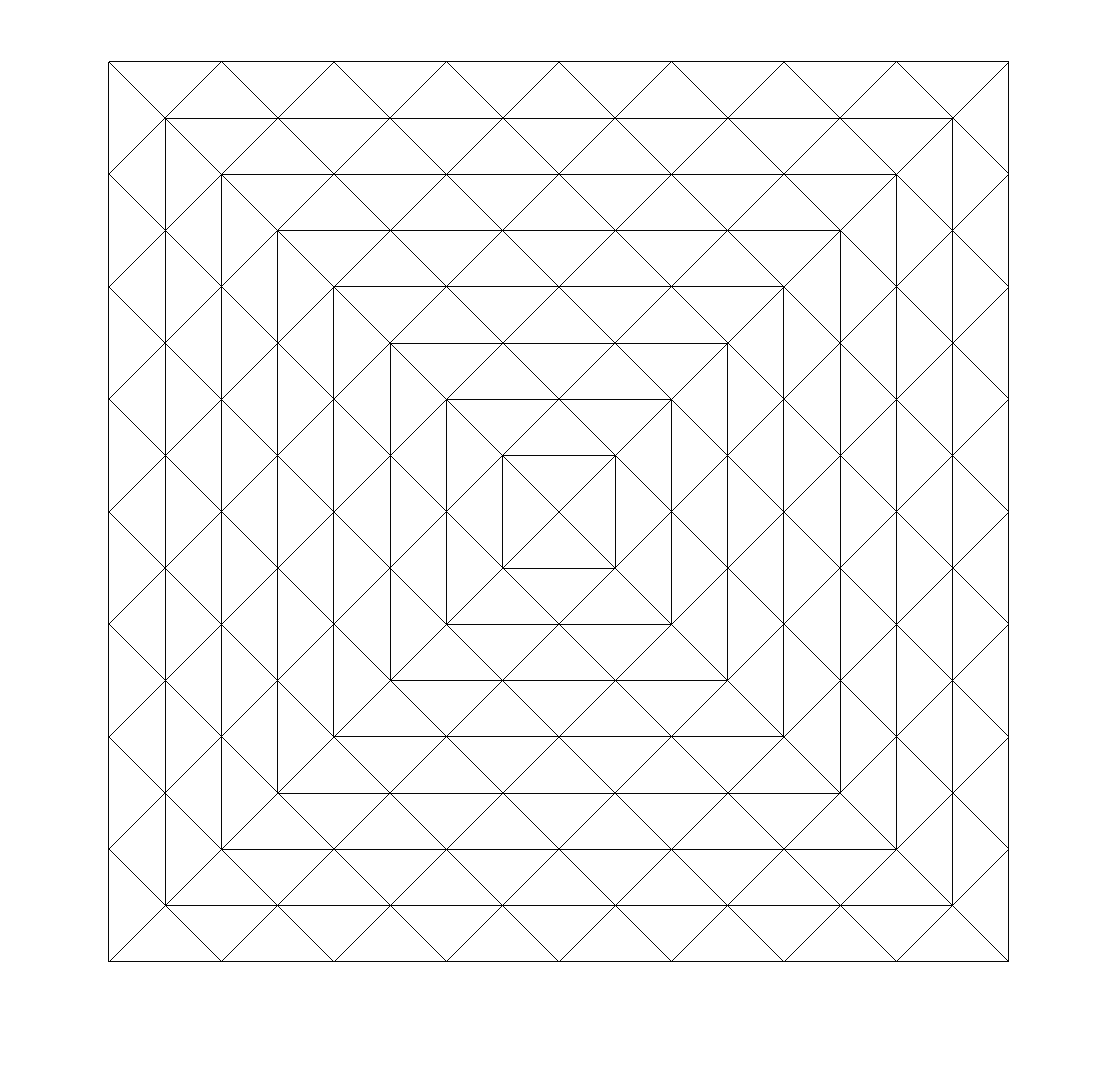}
	\end{minipage}
	\begin{minipage}[t]{0.3\textwidth}
		\centering
		\includegraphics[width=5cm]{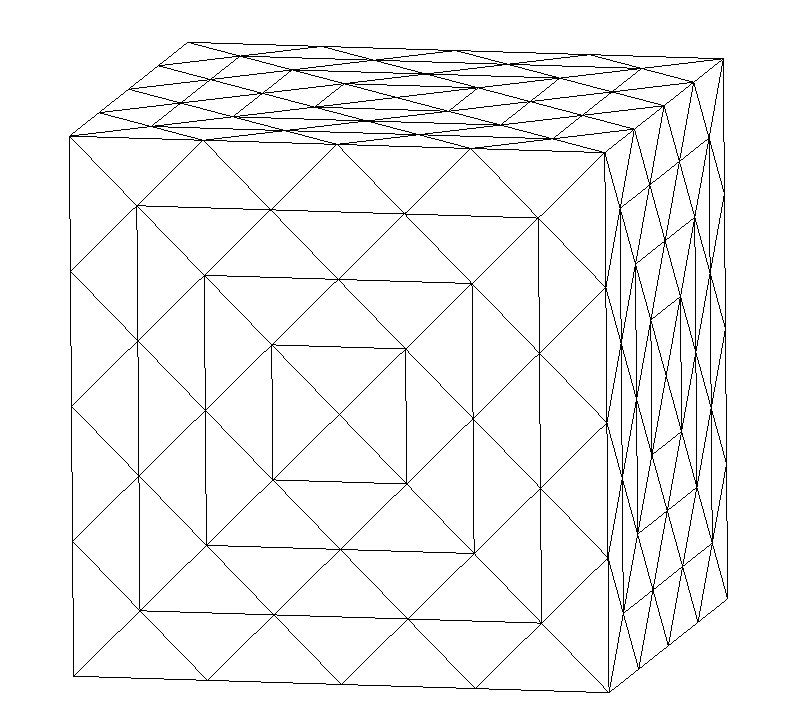}
	\end{minipage}
	\caption{an interval, a triangle and a tedrohedron partition}
	\label{typicalfemgrid123}
\end{figure}

A finite element space is defined in association with a simplicial
finite element grid ${\cal T}_h\subset \Omega$.  A simplicial finite
element grid ${\cal T}_h $ consists of a set of simplexes $\{\tau_k\}
$ and the corresponding set of nodal points is denoted by ${\cal
	N}_h$. For a given grid $\mathcal T_h$, the corresponding finite
element space is given by

\begin{equation}
	\label{femspace}
	V_h=\{v\in C(\Omega):   v \mbox{ is linear  on each element } \tau_k \in \mathcal T_h\}.
\end{equation}

Given $x_i\in \mathcal N_h$,  it is easy to see that there exists a unique
function $\phi_i\in V_h$, known as the nodal basis function, such that
\begin{equation}
	\label{phi-i}
	\phi_i(x_j)=\delta_{ij}, \quad x_j\in {\cal N}_h.   
\end{equation}
A typical profile of $\phi_i$ is shown in Fig.~\ref{fig:basis} for
$d=1$ and $d=2$.
\begin{figure}[htbp]
	\centering
	\begin{minipage}[t]{0.45\textwidth}
		\centering
		\includegraphics[width=6cm]{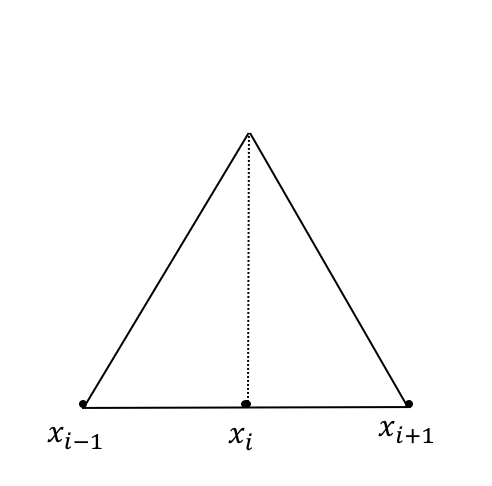}
	\end{minipage}
	\begin{minipage}[t]{0.45\textwidth}
		\centering
		\includegraphics[width=7cm]{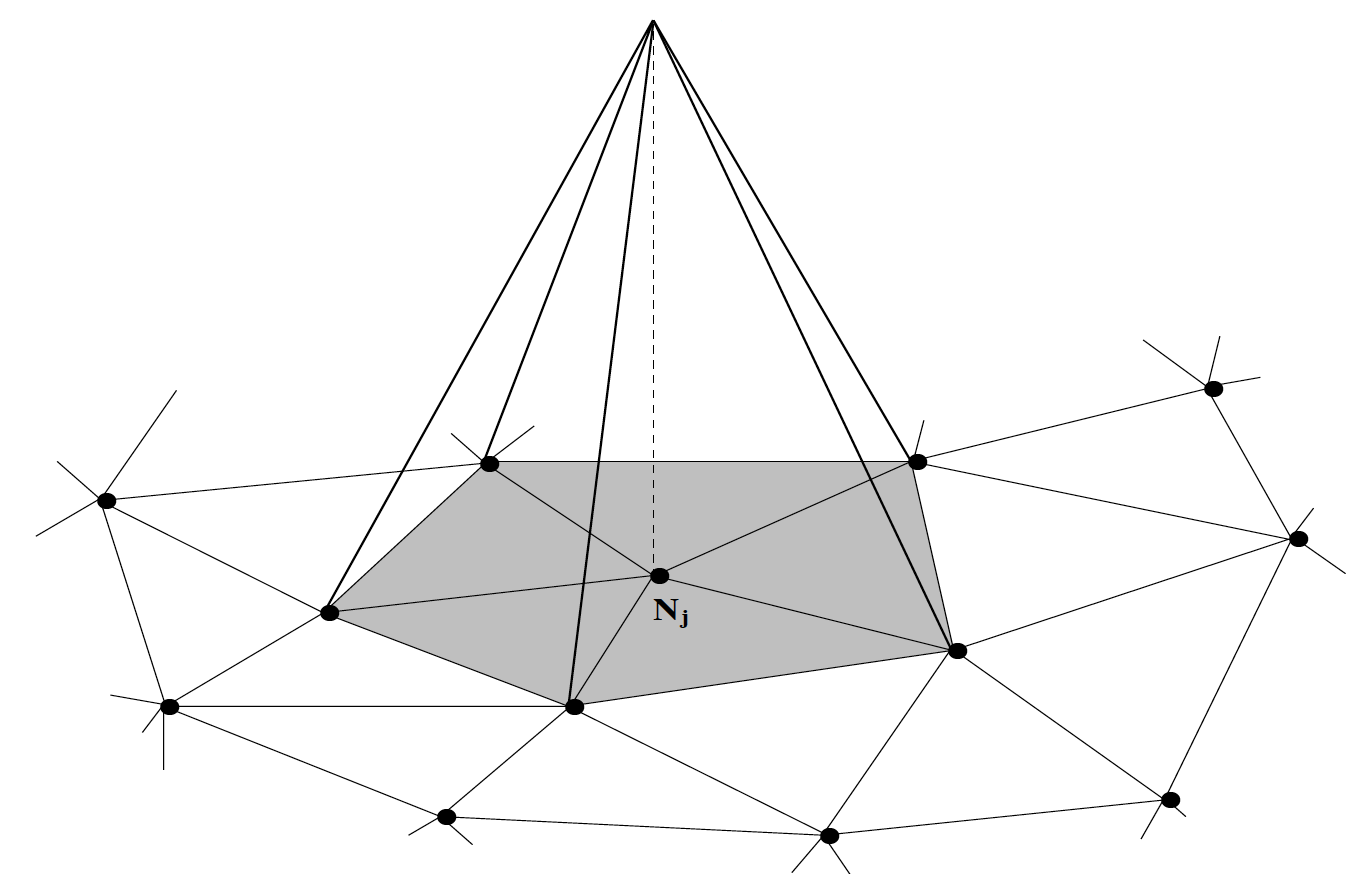}
	\end{minipage}
	\caption{The basis function in 1D and 2D}
	\label{fig:basis}
\end{figure}

Obviously any $v\in V_h$ can be uniquely represented in terms of these
nodal basis functions:
\begin{equation}
	\label{v-basis}
	v(x) = \sum_{i=1}^N \nu_i\phi_i(x),
\end{equation}
where $N$ is the degrees of freedom.

Given $x_i\in {\cal N}_h$, 
let $N(i)$
denote all the indices $j$ such that $\tau_j$ contains the
nodal point $x_i$, namely 
$$
N(i)= \{j: x_i\in \tau_j\}, 
$$ 
and $k_h$ denote the maximum number of
neighboring elements in the grid
\begin{equation}
	\label{kh}
	k_h\equiv d(\mathcal T_h)=\max_{x_i\in {\cal N}_h}| N(i)|.   
\end{equation}
Let $G(i)$ denote the support of the nodal basis $\phi_i$:
$$
G(i)=\bigcup_{k\in N(i)} \tau_k.
$$
We say that the grid $\mathcal T_h$ is locally convex if $G(i)$ is convex  for
each $i$.  

We proceed next to demonstrate how a finite element function can
be represented by a ReLU DNN. Our derivation
and analysis are based on the representation of the finite element
function as a linear combination of basis functions as follows.

\subsection{DNN representation of finite element functions}
As an illustration, we will now demonstrate how a linear finite
element function associated with a locally convex grid $\mathcal T_h$
can be represented by a ReLU DNN.  For more general grids, we refer to
Remark~\ref{rk:convexity} and \S\ref{sec:cpwl}.

Thanks to \eqref{v-basis}, it suffices to show that each basis
function $\phi_i$ can be represented by a ReLU DNN. 
We first note that the case where $d=1$ is trivial as the basis
function $\phi_i$ with support in $[x_{i-1},
x_{i+1} ]$ can be easily written as
\begin{equation}
	\label{1d-basis}
	\phi_i(x) = \frac{1}{h_{i-1}}\relu(x-x_{i-1}) -(\frac{1}{h_{i-1}}+\frac{1}{h_i})\relu(x-x_i) +\frac{1}{h_i}\relu(x-x_{i+1}),
\end{equation}
where $h_i = x_{i+1} - x_i$.

In order to consider the cases where $d>1$, we first prove the following lemma. 
\begin{lemma} \label{basismaxmin} Given $x_i\in {\cal N}_h$, if $G(i)$ is
	convex, then the corresponding basis function can be written as
	\begin{equation}
		\label{i-maxmin}
		\phi_i(x)=\max\bigg\{0,\min\limits_{k\in N(i)}g_k(x)\bigg\},
	\end{equation}
	where, for each $k\in N(i)$, $g_k$ is the global linear function such
	that $g_k=\phi_i$ on $\tau_k$.
\end{lemma}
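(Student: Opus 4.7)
The plan is to verify the identity pointwise, splitting $\mathbb{R}^d$ into two regions: the convex set $G(i)$ (itself partitioned into the simplices $\tau_k$, $k\in N(i)$) and its complement. The key auxiliary fact, which is exactly where convexity of $G(i)$ enters, would be that $g_l(x)\ge 0$ for every $x\in G(i)$ and every $l\in N(i)$---not just on $\tau_l$ where it reduces to $\phi_i\ge 0$. To see this, one observes that the face $F_l$ of $\tau_l$ opposite $x_i$ lies in $\partial G(i)$ (any element sharing $F_l$ with $\tau_l$ cannot contain $x_i$, hence cannot belong to $N(i)$); its affine hull is the hyperplane $\{g_l=0\}$, and $g_l(x_i)=1>0$. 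Since $G(i)$ is convex and $F_l$ is a codimension-one piece of its boundary, this hyperplane is a supporting hyperplane of $G(i)$, whence $G(i)\subset\{g_l\ge 0\}$.

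With this in hand, I would handle the case $x\in G(i)$ by picking any $\tau_k\ni x$, so that $\phi_i(x)=g_k(x)\ge 0$, and showing $\min_{l\in N(i)}g_l(x)=g_k(x)$. Since $g_l-g_k$ is affine on $\mathbb{R}^d$, this reduces to checking $g_l\ge g_k$ at the vertices of $\tau_k$: at $x_i$ both values equal $1$, and at any other vertex $v$ of $\tau_k$ we have $g_k(v)=\phi_i(v)=0$ while $g_l(v)\ge 0$ by the auxiliary fact (as $v\in G(i)$). Affine interpolation then gives $g_l\ge g_k$ throughout $\tau_k$, so the right-hand side of the claimed formula evaluates to $\max\{0,g_k(x)\}=g_k(x)=\phi_i(x)$.

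For $x\notin G(i)$ the goal is to exhibit one $l\in N(i)$ with $g_l(x)\le 0$, so that $\max\{0,\min_l g_l(x)\}=0=\phi_i(x)$. The plan is to join $x_i$ to $x$ by a line segment, let $y$ be the last point of this segment still in $G(i)$, and note that $y$ lies on some facet of the convex polyhedron $G(i)$. The main obstacle---and really the only delicate step---will be to identify this exit facet with some $F_l$, as opposed to a facet of $G(i)$ that happens to sit on $\partial\Omega$. For an interior vertex $x_i$ this is automatic, because every facet of $G(i)$ is of the form $F_l$. For a boundary vertex the additional $\partial\Omega$-facets all contain $x_i$, so a segment emanating from $x_i$ can never first exit through them. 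Past $y$, the segment crosses into $\{g_l<0\}$, giving $g_l(x)<0$ as needed. Combining the two cases yields the identity $\phi_i(x)=\max\{0,\min_{k\in N(i)}g_k(x)\}$.
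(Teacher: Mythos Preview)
Your proposal is correct and follows essentially the same route as the paper's proof: both split into the cases $x\in G(i)$ and $x\notin G(i)$, both exploit convexity of $G(i)$ to get $g_l\ge 0$ on $G(i)$ (you phrase it cleanly as a supporting-hyperplane fact, the paper phrases it as ``all points of $\tau_{k_0}$ lie on the same side of $P_k$''), and both handle the exterior case by running the segment from $x_i$ to $x$ until it exits $G(i)$ through some $F_l$. Your vertex-by-vertex verification of $g_l\ge g_k$ on $\tau_k$ is equivalent to the paper's check at $x_i$ together with the opposite face $P_{k_0}\cap\tau_{k_0}$, and your more explicit discussion of boundary versus interior vertices (which the paper leaves implicit) is a welcome clarification rather than a different idea.
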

\begin{proof} To show \eqref{i-maxmin} holds for all $x\in
	\mathbb{R}^d$, we first consider the case $x\in G(i)$, namely $x\in
	\tau_{k_0}$ for some $k_0\in N(i)$.  Thus
	\begin{equation}
		\label{phi-g}
		\phi_i(x) = g_{k_0}(x)\ge0.    
	\end{equation}
	Let $P_k$ be the hyperplane that passes through the $d-1$ subsimplex
	(of $\tau_k$) that does not contain $x_i$ (see the left figure in Figure \ref{fig:prooflemmamaxmin}).  Since 
	$G(i)$ is convex by assumption, all points in $\tau_{k_0}$ should be on the same
	side of the hyperplane $P_k$. As a result, for all $k\in N(i)$, 
	$$
	g_k(y)\ge 0 \equiv g_{k_0}(y), \quad y\in P_{k_0}\cap\tau_{k_0}. 
	$$
	By combining the above inequality with the following obvious
	inequality that 
	$$
	g_{k}(x_i) =1 \ge 1 = g_{k_0}(x_i), \quad k\in N(i),
	$$
	and the fact that all $g_k$ are linear, we conclude that 
	$$
	g_k(y)\ge g_{k_0}(y), \quad \forall y \in \tau_{k_0},k\in N(i).
	$$
	In particular
	$$
	g_k(x)\ge g_{k_0}(x), \quad k\in N(i).
	$$
	This, together with \eqref{phi-g}, proves that \eqref{i-maxmin} holds for all $x\in G(i)$. Thus
	\begin{equation}\label{eq:lem31:1}
		\max\bigg\{ 0,\min_{k\in N(i)}g_k(x) \bigg\}=g_{k_0}(x).
	\end{equation}

	\begin{figure}[htbp]
		\centering
		\begin{minipage}[t]{0.45\textwidth}
			\centering
			\includegraphics[width=6cm]{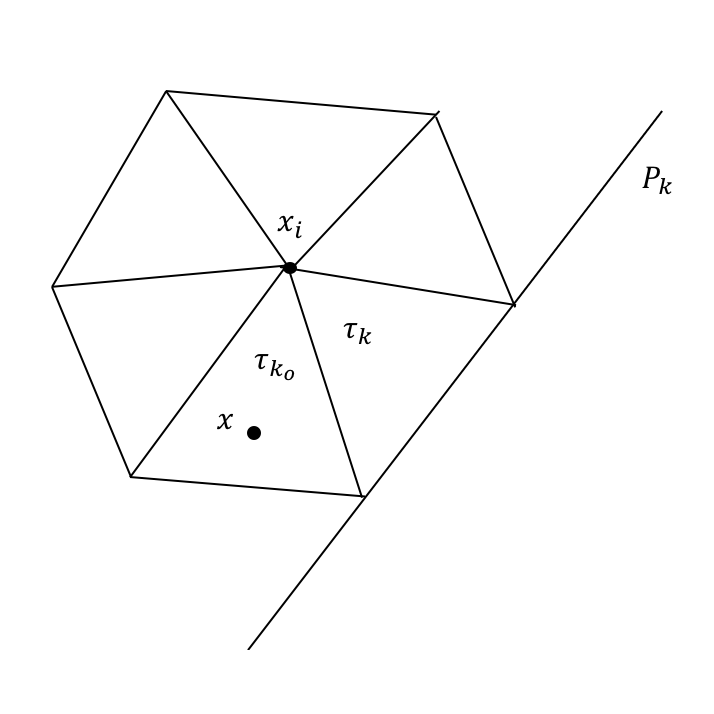}
		\end{minipage}
		\begin{minipage}[t]{0.45\textwidth}
			\centering
			\includegraphics[width=6cm]{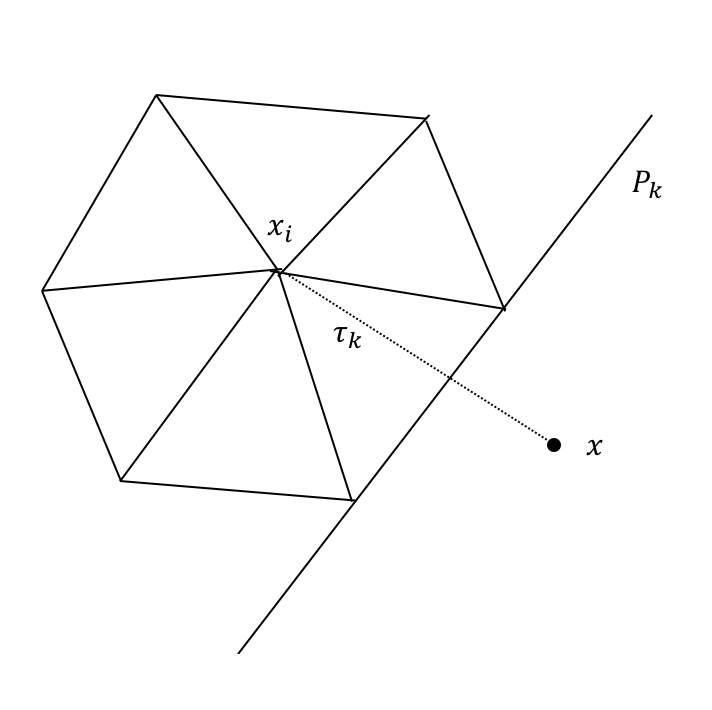}
		\end{minipage}
		\caption{Left: $x\in G(i)$, right: $x\notin G(i)$}
		\label{fig:prooflemmamaxmin}
	\end{figure}

	On the other hand, if $x\notin G(i)$, there exists a $\tau_k\subset
	G(i)$ such that $\tau_k$ contains a segment of the straight line that
	pass through $x$ and $x_i$(see the right figure in Figure \ref{fig:prooflemmamaxmin}).  Again let $P_k$ be the hyperplane
	associated with $\tau_k$ as defined above.  We note that $x$ and $x_i$ are
	on the different sides of $P_k$. Since 
	$$ g_k(x_i)\ge 0,\quad g_k(y) = 0,\quad y\in P_k,
	$$
	we then have
	$$
	\min_{k\in N(i)} g_k(x) \le g_k(x) \le 0,
	$$
	which implies
	$$
	\max\bigg\{ 0,\min_{k\in N(i)} g_k(x) \bigg\} = 0 = \phi_i(x),\qquad x\notin G(i).
	$$
	This finishes the proof of Lemma~\ref{basismaxmin}.
\end{proof}

\begin{remark}\label{rk:convexity} If $G(i)$ is not convex, we could also write the basis function as some max-min functions. But the form of max-min function is not as simple as the case where $G(i)$ is convex, and it depends on the shape of the support of the basis function. In some cases, we can write the basis function as the max-min-max form if $G(i)$ is a special non-convex set.   \end{remark}

We are now in a position to state and prove the main result in this section. 
\begin{theorem}\label{fem-dnn}
	Given a locally convex finite element grid ${\cal T}_h$, any linear
	finite element function with $N$ degrees of freedom, can be written
	as a ReLU-DNN with at most $k = \lceil\log_2 k_h\rceil+1$ hidden layers and at most $\mathcal{O}(k_hN)$ number of the neurons.
\end{theorem}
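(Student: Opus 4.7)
The plan is to reduce the theorem, via \eqref{v-basis}, to representing a single nodal basis function $\phi_i$ by a ReLU DNN and then put $N$ such sub-networks in parallel. By Lemma~\ref{basismaxmin} each basis function has the form $\phi_i(x)=\max\{0,\min_{k\in N(i)}g_k(x)\}$, where each $g_k$ is a global affine function of $x$ and the inner $\min$ has at most $k_h$ terms. The proof therefore reduces to two sub-tasks: (a) implement $\min$ of $m\le k_h$ affine functions by a ReLU network of depth $\lceil\log_2 m\rceil$ and size $O(m)$, and (b) apply one final $\mathrm{ReLU}(\cdot)=\max\{0,\cdot\}$ layer.

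For sub-task (a) I would use the standard pairwise identity
\begin{equation*}
\min(a,b)=\tfrac12(a+b)-\tfrac12\bigl(\relu(a-b)+\relu(b-a)\bigr),
\end{equation*}
which expresses $\min(a,b)$ as an affine combination of two ReLU units. Arranging the $k_h$ functions $g_k$ as the leaves of a balanced binary tree and applying this identity level by level, I fold $\min_{k\in N(i)}g_k$ into a ReLU DNN with $\lceil\log_2 k_h\rceil$ hidden layers whose widths are $2k_h,\,2k_h/2,\ldots,2$, giving a per-basis size of $O(k_h)$ by the geometric series. The affine functions $g_k$ themselves need not consume any neurons, as they are absorbed into the input affine map $\Theta^0$ of the DNN. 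One additional $\mathrm{ReLU}$ applied to the output of the min-tree realizes the outer $\max\{0,\cdot\}$, bringing the depth to $\lceil\log_2 k_h\rceil+1$ and the per-basis size still to $O(k_h)$.

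To assemble the full function, I place the $N$ basis sub-networks in parallel, all reading from the same input $x$ and all having the common depth $\lceil\log_2 k_h\rceil+1$; the final scalar output is obtained by absorbing the coefficients $\nu_i$ from \eqref{v-basis} into the last affine map $\Theta^{k}$, so no extra layer or neurons are needed for the linear combination. The total neuron count is the sum of $O(k_h)$ over $N$ basis functions, yielding $O(k_h N)$ as claimed. One bookkeeping subtlety is that the pairwise $\min$ identity above produces an \emph{affine} (not ReLU) output at each intermediate node, but this is harmless because the next level of the tree immediately feeds the value into another pair of ReLU units through their affine arguments; one must simply track that each intermediate affine value is carried by two ReLU outputs (for the positive and negative parts) so that the next layer can form arbitrary affine combinations of them.

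The main obstacle I anticipate is the bookkeeping of widths and the correct handling of arbitrary $k_h$ (not a power of two), where the binary tree is unbalanced and constants in the geometric sum must be managed carefully; beyond that, the argument is a direct composition of Lemma~\ref{basismaxmin} with the folklore min/max-tree construction for ReLU networks.
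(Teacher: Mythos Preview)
Your proposal is correct and follows essentially the same route as the paper: invoke Lemma~\ref{basismaxmin}, realize the inner $\min$ by a balanced binary tree of pairwise $\min$'s (each costing a constant number of ReLU units), add one final ReLU for the outer $\max\{0,\cdot\}$, and stack the $N$ basis sub-networks in parallel. The only cosmetic difference is that the paper writes the pairwise $\min$ directly with four ReLU units via $\min\{a,b\}=\tfrac12\bigl(\relu(a+b)-\relu(-a-b)-\relu(a-b)-\relu(b-a)\bigr)$, which already carries the affine part $\tfrac12(a+b)$ through ReLU and so avoids the extra positive/negative-part bookkeeping you flag at the end.
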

\begin{proof}
	We have the following identity,
	\begin{equation}\label{min2terms}
		\min\{a,b\} = \frac{a+b}{2} -  \frac{|a-b|}{2}= v\cdot \relu(W\cdot [a, b]^T),
	\end{equation}
	where 
	$$
	v = \frac{1}{2}[1,-1,-1,-1],\qquad W = \left[
	\begin{matrix}
	&1  & 1\\
	&-1 &-1\\
	&1  &-1\\
	&-1 & 1\\
	\end{matrix}
	\right].
	$$
	By Lemma~\ref{basismaxmin}, the basis function $\phi_i(x)$ can be written as:
	$$
	\phi_i(x) = \max\bigg\{ 0,\min_{k\in N(i)}g_k(x)\bigg\}.
	$$ 
	For convenience, we assume that
	$$
	N(i) = \{ r_1,r_2,...r_{|N(i)|} \}.
	$$
	Then we have
	\begin{equation*}
		\begin{aligned}
			\min\limits_{k\in N(i)}g_k(x)	&= \min\big\{g_{r_1}(x),...,g_{r_{|N(i)|}}(x)\big\}\\
			&= \min\bigg\{ \min\{g_{r_1},...,g_{r_{\lceil|N(i)|/2 \rceil}}\},\min\{ g_{r_{\lceil|N(i)|/2 \rceil+1}},...,g_{r_{|N(i)|}} \} \bigg\}\\	
			&= 	v\cdot \relu(W\cdot
			\begin{bmatrix}
				\min\{g_{r_1},...,g_{r_{\lceil|N(i)|/2 \rceil}}\}\\
				\min\{ g_{r_{\lceil|N(i)|/2 \rceil+1}},...,g_{r_{|N(i)|}} \} 
			\end{bmatrix} ).
		\end{aligned}
	\end{equation*}
	According to this procedure, we get the minimum of $|N(i)|$ terms by splitting them in two, each taking the minimum over at  most $\lceil|N(i)|/2 \rceil$ terms. This contributes to one ReLU hidden layer. Then we can further split the terms 
	$$
	\min\{g_{r_1},...,g_{r_{\lceil|N(i)|/2 \rceil}}\},\quad
	\min\{ g_{r_{\lceil|N(i)|/2 \rceil+1}},...,g_{r_{|N(i)|}} \}
	$$
	until all the minimum functions contain only 1 or 2 terms.
	\begin{enumerate}
		\item If there is one term 
		$$
		\min \{a\} = a.
		$$
		\item If there are two terms
		$$
		\min\{a,b\} = v\cdot \relu(W\cdot [a,b]^T).
		$$
	\end{enumerate}
	which is also a ReLU DNN with $1$ hidden layer.  So we can write a basis function as a $1+\lceil\log_2 k_h\rceil$-hidden-layer DNN. Considering the binary-tree structure, a $k$-layer full binary-tree has $2^{k}-1$ nodes.  We can see the number of neurons is at most
	$$
	\mathcal{O}(2^k)  = \mathcal{O}(2^{1+\lceil\log_2 k_h\rceil}) = \mathcal{O}(k_h).
	$$
	By (\ref{v-basis}), the piecewise linear function can be represented
	as a DNN with $k = 1 + \lceil \log_2 k_h \rceil$ hidden layers. The
	number of neurons is at most $\mathcal{O}(k_hN)$. 
\end{proof}

We now consider a special class of the so-called shape regular finite
element grid $\mathcal T_h$ which satisfies
\begin{equation}
	\label{shape-regular}
	\kappa_1\le \frac{r_\tau}{R_\tau}\le \kappa_2,
	\quad\forall\tau\in\mathcal T_h,
\end{equation}
for some constants $\kappa_1$ and $\kappa_2$ independent of $h$ and
$d$, where $r_\tau$ ($R_\tau$)  is the radius of the largest (smallest) ball
contained in (containing) $\tau$. 
\begin{corollary}\label{fem-dnn-cont}
	Given a locally convex and shape regular finite element grid ${\cal T}_h$, any linear
	finite element function with $N$ degrees of freedom(DOFs), can be written
	as a ReLU-DNN with at most $\mathcal O(d)$ hidden layers. The
	number of neurons is at most $\mathcal{O}(\kappa^dN)$ for some
	constant $\kappa\ge 2$ depending on the shape-regularity of
	$\mathcal T_h$.  The number of non-zero parameters is at most $\mathcal{O} (d\kappa^dN)$.
\end{corollary}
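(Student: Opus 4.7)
The plan is to obtain this corollary by combining Theorem~\ref{fem-dnn} with a dimension-and-shape-regularity bound on the local vertex valence $k_h = \max_i |N(i)|$. Theorem~\ref{fem-dnn} already delivers a DNN with $\lceil \log_2 k_h\rceil + 1$ hidden layers and $\mathcal{O}(k_h N)$ neurons, so all the work is in estimating $k_h$ in terms of $d$ and the shape-regularity constants $\kappa_1,\kappa_2$.

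The central geometric step I would carry out first is the following. Fix a nodal point $x_i$ and any simplex $\tau \in N(i)$. Shape regularity forces the solid angle that $\tau$ subtends at its vertex $x_i$ to be bounded below by some constant $c_d > 0$ depending only on $\kappa_1,\kappa_2$ and $d$. One clean way to see this is to note that $\tau$ contains an inscribed ball of radius $r_\tau$ and is contained in a circumscribed ball of radius $R_\tau$, with $r_\tau/R_\tau \ge \kappa_1$; intersecting $\tau$ with a small sphere of radius $\varepsilon$ about $x_i$ yields a spherical region whose $(d-1)$-measure is at least a fixed fraction of $\varepsilon^{d-1}$ times the area $\omega_d$ of $S^{d-1}$, with that fraction depending only on $\kappa_1,\kappa_2,d$. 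Since the simplices in $N(i)$ tile a neighbourhood of $x_i$ with disjoint interiors, their solid angles sum to at most $\omega_d$, giving
\begin{equation*}
    k_h \;=\; \max_i |N(i)| \;\le\; \frac{\omega_d}{c_d} \;\le\; \kappa^{d}
\end{equation*}
for some $\kappa \ge 2$ depending only on the shape-regularity constants.

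Once this estimate is in hand, the remaining steps are routine book-keeping. Substituting into Theorem~\ref{fem-dnn} yields at most $\lceil \log_2 k_h\rceil + 1 \le d\log_2 \kappa + 1 = \mathcal{O}(d)$ hidden layers and at most $\mathcal{O}(k_h N) = \mathcal{O}(\kappa^d N)$ neurons, giving the first two claims. For the count of non-zero parameters I would trace the architecture produced in the proof of Theorem~\ref{fem-dnn}: the only layer whose neurons receive input from $\mathbb{R}^d$ is the first one (which evaluates the affine functions $g_k$ and feeds their values into the binary min-tree), so this layer contributes $\mathcal{O}(d\, \kappa^d N)$ weights; each subsequent neuron in the min-tree has only $\mathcal{O}(1)$ incoming edges (cf.~the explicit matrix $W$ in \eqref{min2terms}), contributing $\mathcal{O}(\kappa^d N)$ more parameters in total. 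Summing gives $\mathcal{O}(d\, \kappa^d N)$ non-zero parameters.

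The only genuine obstacle is the solid-angle lower bound $c_d$; everything else is substitution into the already-proved Theorem~\ref{fem-dnn}. To make that step airtight I would either invoke a standard estimate from finite-element textbooks relating the inscribed/circumscribed radius ratio to the minimum dihedral and solid angles of a simplex, or give a short self-contained argument using the two-ball sandwich description of a shape-regular $\tau$. The constant $\kappa$ is then absorbed into the $\mathcal{O}$-notation and the three claimed bounds follow at once.
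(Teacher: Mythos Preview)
Your approach is correct and matches the paper's intended argument: the corollary is stated without proof immediately after Theorem~\ref{fem-dnn}, and is meant to follow by combining that theorem with the standard shape-regularity bound $k_h \le \kappa^d$, exactly as you outline. Your solid-angle (or equivalently, volume-packing) estimate for $k_h$ and your layer-by-layer accounting of non-zero weights are the right way to fill in the details the paper omits.
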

We note that, using the approach described in this section, a finite
element function with $N$ DOFs can be represented by a
DNN with $\mathcal O(N)$ number of weights. This property is expected
to be useful when DNNs are used in adaptive mesh-less or vertex-less
numerical discretization methods for partial differential equation,
which is a subject of further study.

\subsection{Comparison of error estimates in adaptive finite element
	and DNN methods}
Error estimates for adaptive finite element methods are well
studied in the literature. For example, an appropriately adapted
linear finite element function with $\mathcal{O} (N)$ DOFs is proved
to admit the following error estimate:
\begin{equation}
	\label{femerror1}
	\inf_{v\in V_h}\|u- v\|_{0,2, \Omega} \le C N^{-\frac{2}{d}} |u|_{2,\frac{2d}{d+2},\Omega},
\end{equation}
if $u \in W^{2,\frac{2d}{d+2}}(\Omega)$ and $v$ is the
interpolation based on the adapted finite element grid.  More details
can be founded in \cite{nochetto2011primer, devore1998nonlinear}.

For a shallow network ${\rm DNN}_1$ with $\mathcal O(N)$ DOFs
(i.e. $\mathcal{O} (\frac{N}{d})$ neurons), we have the next error
estimate in \eqref{Barron0} as
\begin{equation}
	\min_{v \in {\rm DNN}_{1}^\frac{N}{d}(\Omega)} \|u - v\|_{0, 2, \Omega} \lesssim
	|\Omega|^{1/2} \left(\frac{N}{d}\right)^{-{1\over2}}
	\int_{\mathbb R^d}|\omega|_\Omega|\hat u(\omega)|d\omega. 
\end{equation}
In comparison, an adaptive finite element function with the same order of
$\mathcal O(N)$ DOFs can only have convergence rate of order $\mathcal
O(N^{-\frac{2}{d}})$.

As will be shown in \S \ref{sec:minilayers}, shallow neural
networks ${\rm DNN}_1$ (namely with only one hidden layer) cannot
recover a linear finite element function in general, but may potentially 
lead to better asymptotic accuracy as the dimension $d$ gets larger. 

One idea that may help us to understand is that the
shallow network is a kind of $N$-term or basis selection
(\cite{devore1998nonlinear} )approximation scheme with $\{\sigma(w_i
x+b_i)\}_{i=1}^N$ as the basis functions (as shown in
Theorem~\ref{linearindep}),  similar to using $\{sin(nx)\}_{n=1}^N$ as the
basis functions in Fourier approximation or some others in wavelets.

For deep ReLU neural networks, our connections of FEM and ReLU DNNs in
this section help us to construct a special ReLU DNN models with depth
$\mathcal{O}(d)$ and parameters $\mathcal O(dk_dN)$ for $\mathcal
O(N)$ DOFs. By using the approximation result for adaptive FEM, DNN
approximation $u_{DNN}$ for special structure with $\mathcal O(N)$ DOFs
can get
\begin{equation}\label{errordnnelem}
	\|u-u_{DNN}\|_{0,2, \Omega}\lesssim  \left(\frac{N}{dk_d}\right)^{-\frac{2}{d}}|u|_{1,\frac{2d}{2+d}, \Omega} \lesssim  (N)^{-\frac{2}{d}}|u|_{1,\frac{2d}{2+d}, \Omega},
\end{equation}
and $k_d = \mathcal{O}(\kappa^d)$.  This shows that there exists some special deep ReLU DNN structure which is at least as good as adaptive FEM.

\section{LFE can not be recovered by ${\rm DNN}_1$ for $d\ge 2$}\label{sec:FEM_DNN1}
\label{sec:minilayers}
In the previous section, we show that a finite element function can be
represented by a ReLU DNN with $\log_2 k_h+1$ hidden layers. 

In view of Lemma \ref{dnn-cpwl} and the fact that ${\rm{DNN}_J}
\subseteq {\rm{DNN}_{J+1}} $, it is natural to ask that how many
layers are needed at least to recover all linear finite element
functions in $\mathbb{R}^d$.  In this section, we will show that 
\begin{equation}\label{key}
	J_d \ge 2, \quad \text{if} \quad d\ge 2,
\end{equation}
where $J_d$ is the minimal $J$ such that all linear finite element
functions in $\mathbb R^d$ can be recovered by ${\rm DNN}_J$.

In particular, we will show the following theorem.
\begin{theorem}\label{lowerbound}
	If $\Omega\subset \mathbb R^d$ is either 
	a bounded domain or $\Omega=\mathbb{R}^d$,  
	${\rm DNN}_1$ can not be used to recover all linear finite element
	functions in $\Omega$. 
\end{theorem}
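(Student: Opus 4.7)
The plan is a proof by contradiction, exhibiting one linear finite element function that does not lie in ${\rm DNN}_1$. I take any simplicial mesh of $\Omega$ fine enough to contain an interior vertex $x_i$ whose closed star $\omega_i := \bigcup_{k \in N(i)} \tau_k$ is compactly contained in $\Omega$ with enough surrounding room that any hyperplane crossing $\omega_i$ must also meet $\Omega \setminus \omega_i$; such a mesh exists whether $\Omega$ is bounded or $\Omega = \mathbb{R}^d$. By \eqref{v-basis} it suffices to prove that the nodal hat function $\phi_i \notin {\rm DNN}_1$.

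Suppose for contradiction that $\phi_i(x) = \sum_{j=1}^m \alpha_j \relu(w_j x + b_j) + \beta$ on $\Omega$. Using the elementary identities $\relu(cy) = c\,\relu(y)$ for $c > 0$, $\relu(-y) = \relu(y) - y$, and $y = \relu(y) - \relu(-y)$, any two summands with linearly dependent parameter pairs $(w_j, b_j)$ can be merged into one while pushing a possible affine remainder into an auxiliary affine piece $a^T x + d$. Iterating yields a representation
$$ \phi_i(x) = \sum_j \alpha_j\,\relu(w_j x + b_j) + a^T x + (d+\beta) \quad \text{on } \Omega, $$
in which all remaining $(w_j, b_j)$ are pairwise linearly independent in $\mathbb{R}^{1\times (d+1)}$ with $w_j \neq 0$, so the hyperplanes $H_j := \{w_j x + b_j = 0\}$ are pairwise distinct. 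Theorem~\ref{linearindep} legitimises this reduction by guaranteeing the surviving ReLU terms are linearly independent, so no further collapse is possible.

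Next I apply the distributional Laplacian. A direct calculation gives $\Delta[\relu(w_j x + b_j)] = |w_j|\,\mathcal{H}^{d-1}|_{H_j}$ (the $(d-1)$-dimensional Hausdorff measure on $H_j$, scaled by $|w_j|$), and the affine piece is harmonic, so on $\Omega$
$$ \Delta \phi_i = \sum_j \alpha_j |w_j|\,\mathcal{H}^{d-1}|_{H_j \cap \Omega}. $$
Because $d \ge 2$ and the $H_j$ are pairwise distinct hyperplanes, the measures on different $H_j$ are mutually singular (pairwise intersections $H_j \cap H_k$ have codimension $\ge 2$ and hence null $(d-1)$-measure). The left-hand side, however, is supported on the $(d-1)$-faces lying inside the bounded set $\omega_i$. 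Each full, hence unbounded, hyperplane $H_j$ crossing $\omega_i$ meets the open set $\Omega \setminus \omega_i$ in a subset of positive $(d-1)$-measure, and there $\phi_i \equiv 0$ forces $\Delta \phi_i = 0$. Mutual singularity rules out any cross-cancellation between distinct $H_j$, so every $\alpha_j$ must vanish. But then $\phi_i$ is affine on $\Omega$, which contradicts $\phi_i(x_i) = 1$ together with $\phi_i \equiv 0$ outside $\omega_i$.

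The main obstacle is the canonical-form reduction—precisely where Theorem~\ref{linearindep} is genuinely needed in order to reach pairwise linearly independent parameter pairs and hence distinct break hyperplanes—followed by a careful deployment of the distributional Laplacian, especially the mutual-singularity step that forbids cross-cancellation across different $H_j$. The geometric bookkeeping (arranging $\omega_i$ to sit strictly inside $\Omega$ with room for every hyperplane through $\omega_i$ to exit into $\Omega \setminus \omega_i$) is routine; the case $\Omega = \mathbb{R}^d$ is immediate and requires no adjustment.
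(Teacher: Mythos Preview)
Your argument is correct and shares the paper's core idea: reduce to a representation with pairwise distinct break hyperplanes, then exploit the fact that the ``singular set'' of a compactly supported hat function is bounded while each hyperplane is not. The paper executes this by tracking the discontinuity set $D_{\nabla f}$ of the gradient and arguing set-theoretically that $\bigcup_\ell D_{\tilde g_\ell}\setminus\bigcup_{s\neq t}(D_{\tilde g_s}\cap D_{\tilde g_t})$ is unbounded (or reaches $\partial\Omega$ in the bounded case). You instead apply the distributional Laplacian, obtaining surface measures $|w_j|\,\mathcal{H}^{d-1}|_{H_j}$ that are mutually singular when $d\ge 2$, and read off $\alpha_j=0$ from the portion of each $H_j$ lying in $\Omega\setminus\omega_i$. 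This measure-theoretic packaging is slightly cleaner and handles $\Omega$ bounded and $\Omega=\mathbb{R}^d$ more uniformly, but it is the same mechanism underneath: the support of $\Delta\phi_i$ is exactly the gradient-discontinuity set the paper works with.

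One point to fix: your appeal to Theorem~\ref{linearindep} is circular, since in the paper that theorem is proved \emph{after} Theorem~\ref{lowerbound}, precisely by recycling the discontinuity-set analysis developed there. Fortunately the appeal is also unnecessary. The merging of ReLU terms with proportional $(w_j,b_j)$ is a self-contained finite algebraic reduction that terminates with pairwise linearly independent parameter pairs, hence distinct hyperplanes $H_j$; whether the surviving ReLU functions are linearly independent as vectors plays no role in your Laplacian step. Simply delete the sentence invoking Theorem~\ref{linearindep} and the proof stands.
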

\begin{proof}
	We prove it by contradiction. Let us assume that for any continuous
	piecewise linear function $f: \Omega \to \mathbb{R} $, we can find
	finite $N \in \mathbb{N}$, $w_i \in \mathbb{R}^{1,d}$ as row vector
	and $\alpha_i, b_i, \beta \in \mathbb{R}$ such that
	$$
	f =  \sum_{i=1}^N \alpha_i \relu(w_i  x +b_i) + \beta,
	$$
	with $f_i = \alpha_i \relu(w_i x +b_i)$, $\alpha_i \neq 0$ and $w_i
	\neq 0$.  Consider the finite element functions, if this one hidden
	layer ReLU DNN can recover any basis function of FEM, then it can
	recover the finite element space.  Thus let us assume $f$ is a locally
	supported basis function for FEM, i.e. ${\rm{supp}}(f)$ is bounded,
	where
	$$
	{\rm{supp}(f)} := \overline{\{ x \in \Omega ~|~ f(x) \neq 0\} }.
	$$
	Furthermore, if $\Omega$ is a bounded domain, we assume that 
	\begin{equation}\label{distcondi}
		d({\rm supp}(f), \partial \Omega) > 0,
	\end{equation}with 
	$$
	d(A, B) = \inf_{x\in A, y\in B} \|x-y\|,
	$$ 
	as the distance of two closed sets. 
	
	A more important observation is that $\nabla f: \Omega \to
	\mathbb{R}^d$ is a piecewise constant vector function. The key
	point is to consider the discontinuous points for $g := \nabla
	f = \sum_{i=1}^N \nabla f_i$.
	
	For more general case, we can define the set of discontinuous points of a function by
	$$
	D_{g} := \{x \in \Omega~|~ x ~ \text{is a discontinuous point of} ~ g\}.
	$$
	Because of the property that 
	\begin{equation}\label{eq:disfun}
		D_{f+g} \supseteq D_{f} \cup D_{g} \backslash (D_{f} \cap D_{g}),
	\end{equation}
	we have
	\begin{equation}\label{eq:dis_fn}
		D_{\sum_{i=1}^N g_i} \supseteq \bigcup_{i=1}^N D_{g_i} \backslash \bigcup_{i\neq j}\left( D_{g_i}\cap D_{g_j} \right).
	\end{equation}
	Note that
	\begin{equation}\label{eq:def_gi}
		g_i = \nabla f_i(x) =  \nabla \left( \alpha_i \relu(w_i  x +b_i)  \right) =\left(\alpha_iH(w_i  x +b_i)\right)w_i \in \mathbb{R}^d,
	\end{equation}
	for $i=1:N$ with $H$ be the Heaviside function defined as: 
	$$
	H(x) = \begin{cases}
	0 &\text{if} ~ x \le 0, \\
	1 &\text{if} ~ x > 0.
	\end{cases}
	$$ 
	This means that 
	\begin{equation}\label{eq: D_gi}
		D_{g_i} = \{ x ~|~ w_i  x + b_i = 0\}
	\end{equation}
	is a $d-1$ dimensional affine space in $\mathbb{R}^d$.

	Without loss of generality, we can assume that 
	\begin{equation}\label{eq:assumD_gi}
		D_{g_i} \neq D_{g_j}.
	\end{equation}
	When the other case occurs, i.e. $D_{g_{\ell_1}} = D_{g_{\ell_2}} = \cdots= D_{g_{\ell_k}}$, by the definition of $g_i$ in \eqref{eq:def_gi} and $D_{g_i}$ in \eqref{eq: D_gi} , 
	this happens if and only if there is a row vector $(w, b)$ such that
	\begin{equation}\label{eq:Dfcondition}
		c_{\ell_i}\begin{pmatrix}
			w &
			b
		\end{pmatrix} =  
		\begin{pmatrix}
			w_{\ell_i} &
			b_{\ell_i}
		\end{pmatrix},
	\end{equation}
	with some $c_{\ell_i} \neq 0$ for $i = 1:k$.  We combine those $g_{\ell_i}$ as
	\begin{align}\label{mergeH}
		\tilde g_{\ell} &= \sum_{i=1}^k g_{\ell_i} = \sum_{i=1}^k \alpha_{\ell_i} H(w_{\ell_i}  x + b_{\ell_i}) w_{\ell_i}, \\
		&= \sum_{i=1}^k \left( c_{\ell_i}\alpha_{\ell_i} H\left(c_{\ell_i}(w  x + b)\right) \right) w, \\
		&=\begin{cases}
			\left(\sum_{i=1}^k  c_{\ell_i}\alpha_{\ell_i} H(c_{\ell_i}) \right) w  \quad &\text{if} \quad w x + b > 0,\\
			\left(\sum_{i=1}^k  c_{\ell_i}\alpha_{\ell_i} H(-c_{\ell_i}) \right) w  \quad &\text{if} \quad w x + b \le 0.\\
		\end{cases}
	\end{align}
	Thus, if 
	$$
	\left(\sum_{i=1}^k  c_{\ell_i}\alpha_{\ell_i} H(c_{\ell_i}) \right)  = \left(\sum_{i=1}^k  c_{\ell_i}\alpha_{\ell_i} H(-c_{\ell_i}) \right),
	$$
	$\tilde g_\ell$ is a constant vector function, that is to say $D_{\sum_{i=1}^k g_{\ell_i}} = D_{\tilde g_\ell} = \emptyset$. 
	Otherwise, $\tilde g_\ell$ is a piecewise constant vector function with the property that 
	$$
	D_{\sum_{i=1}^k g_{\ell_i}} = D_{\tilde g_\ell} = D_{g_{\ell_i}} = \{ x ~|~ w x + b = 0\}.
	$$
	This means that we can use condition \eqref{eq:Dfcondition} as an equivalence relation and split $\{g_i\}_{i=1}^N$ into some groups, and we can combine those $g_{\ell_i}$ in each group as what we do above. After that, we have
	$$
	\sum_{i=1}^N g_i = \sum_{\ell=1}^{\tilde N} \tilde g_{\ell},
	$$
	with $D_{\tilde g_s} \neq D_{\tilde g_t}$.
	Finally, we can have that $D_{\tilde g_s} \cap D_{\tilde g_t}$ is an empty set or a $d-2$ dimensional affine space in $\mathbb{R}^d$.
	Since
	$\tilde N \le N$ is a finite number, 
	$$
	D := \bigcup_{i=1}^N D_{\tilde g_\ell} \backslash \bigcup_{s\neq t}\left( D_{\tilde g_s}\cap D_{\tilde g_t} \right)
	$$
	is an unbounded set. 
	\begin{itemize}
		\item If $\Omega = \mathbb{R}^d$,
		$$
		{\rm supp(f)} \supseteq D_{g} = D_{\sum_{i=1}^N g_i} = D_{ \sum_{\ell=1}^{\tilde N} \tilde g_{\ell}} \supseteq D,
		$$ is contradictory to the assumption that $f$ is locally supported.
		\item If $\Omega$ is a bounded domain, 
		$$
		d(D, \partial \Omega) = 
		\begin{cases}
		s > 0 \quad &\text{if}\quad  D_{\tilde g_i} \cap \Omega = \emptyset, \forall i\\
		0 \quad &\text{otherwise}.
		\end{cases}
		$$
		Note again that all $D_{\tilde g_i}$'s are $d-1$ dimensional affine spaces, while $D_{\tilde g_i} \cap D_{\tilde g_j}$ is either an empty set or a d-2 dimensional affine space. 
		If $d(D, \partial \Omega) > 0$, this implies that $\nabla f$ is continuous in $\Omega$, which contradicts the  assumption that $f$ is a basis function in FEM.
		If $d(D, \partial \Omega) = 0$, this contradicts the previous assumption in \eqref{distcondi}.
	\end{itemize}
	Hence ${\rm DNN}_1$ cannot recover any piecewise linear function in $\Omega$ for $d \ge 2$.
\end{proof}

Following the proof above, we can prove Theorem~\ref{linearindep}.

\begin{proof}
	If $(w_i, b_i)$ and $(w_j, b_j)$ are linearly independent for any $i \neq j$, we know
	that the set of discontinuous points for any nontrivial combinations of $\nabla_x \relu(w_i  x + b_i)$ cannot be empty. 
	So, this is contradictory to
	$$
	f = \sum_{i=1}^m \alpha_i \relu(w_i x+b_i) = 0,
	$$ 
	since $D_{\nabla f} \neq \emptyset$ where $f(x)$ is a combination of $ \{\relu(w_i  x + b_i)\}$. 
\end{proof}

This shows that despite it has the so-called universal approximation
properties \cite{hornik1989multilayer, cybenko1989approximation},
shallow network is not enough in the case of recovering all CPWL
functions.  More precisely, although the shallow ReLU DNNs are CPWL
functions themselves and can approximate any CPWL functions with any
accuracy, there are some CPWL functions they cannot represent
exactly. As an example, a local basis function in FEM with compact
support and some other simple conditions cannot be represented by ReLU
DNNs with one hidden layer for dimensions greater than $2$.

As for the upper bound, Theorem \ref{main0} in \cite{arora2016understanding} provides us with one answer.

\begin{corollary}
	\begin{equation}
		2 \le J_d \le \lceil\log_2(d+1)\rceil.
	\end{equation}
	This also indicates that $\lceil\log_2(d+1)\rceil$ is ``optimal" for $d=2,3$.
\end{corollary}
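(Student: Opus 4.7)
The plan is to observe that this corollary is a pure packaging of two already-available facts, one proved in this paper and one imported from the literature, so the work amounts to assembling the inequalities cleanly and then verifying the optimality claim by a short numerical check.

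For the lower bound, I would simply invoke Theorem~\ref{lowerbound}, which was just established immediately above. By the very definition of $J_d$ as the smallest $J$ for which every linear finite element function in $\mathbb{R}^d$ lies in ${\rm DNN}_J$, Theorem~\ref{lowerbound} says precisely that $J=1$ fails once $d\ge 2$. Combined with the monotone inclusion ${\rm DNN}_J\subseteq {\rm DNN}_{J+1}$ noted earlier, this gives $J_d\ge 2$. For the upper bound, I would cite Theorem~\ref{main0} of \cite{arora2016understanding}, which guarantees that every CPWL function on $\mathbb{R}^d$ can be represented by a ReLU DNN with at most $\lceil\log_2(d+1)\rceil$ hidden layers. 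Since any linear finite element function is, in particular, CPWL, this representation applies and yields $J_d\le \lceil\log_2(d+1)\rceil$.

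To conclude the optimality remark for small $d$, I would just observe that $\lceil\log_2 3\rceil=2$ and $\lceil\log_2 4\rceil=2$, so the two bounds coincide at the value $2$ for $d=2,3$. In those dimensions the sandwich collapses, which is exactly the assertion of the corollary.

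The only real subtlety, and not so much an obstacle as a caveat, is that the two bounds stop agreeing for $d\ge 4$, since $\lceil\log_2(d+1)\rceil$ begins to grow while the lower bound we have proved remains at $2$. I would therefore flag in passing that closing this gap for $d>3$ is not addressed by the present argument and is noted in the introduction as an open problem; no attempt to improve either bound is needed to finish the proof of the corollary as stated.
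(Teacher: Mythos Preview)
Your proposal is correct and matches the paper's approach: the corollary is stated there without a standalone proof, immediately after the lower bound $J_d\ge 2$ from Theorem~\ref{lowerbound} and with an explicit pointer to Theorem~\ref{main0} (from \cite{arora2016understanding}) for the upper bound. Your added numerical check that $\lceil\log_2 3\rceil=\lceil\log_2 4\rceil=2$ and your caveat about $d\ge 4$ are exactly the content the paper intends.
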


\section{General CPWL as a ReLU DNN}\label{sec:cpwl}
In the previous sections, we present a special approach to represent a
linear simplicial finite element function by a ReLU DNN.  
In this section, we discuss a general approach to represent a general CPWL by
a ReLU DNN, which is introduced in \cite{arora2016understanding}.  In comparison with the special approach in \S\ref{sec:FEM_DNN}, this general approach gives a ReLU DNN with relatively fewer
layers but significantly more number of neurons.

\subsection{The main result}
Assume that $f:\mathbb{R}^d\to\mathbb{R}$ is a continuous function
that are piecewise linear on $m$ subdomains
$$
\Omega_i, \quad i=1:m.
$$
Namely, on each $\Omega_i$, $f$ is a linear function:
$$
f(x)=f_i(x)=a_i\cdot x+b_i, \quad x\in \Omega_i,
$$
with some $a_i \in \mathbb{R}^d$ and $b_i \in \mathbb{R}$.
\begin{lemma}
	\label{unique}
	There are $M$ number of subdomains, 
	$$
	\tilde \Omega_k, \quad k=1:M,
	$$
	such that
	$$
	f_i-f_j \mbox{ does not change sign on each } \tilde\Omega_k, k=1:M. 
	$$
	Furthermore
	$$
	m\le M\le m!
	$$   
\end{lemma}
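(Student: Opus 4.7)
The plan is to produce the refined partition as the cell complex of the hyperplane arrangement formed by the pairwise difference hyperplanes, and then bound the number of cells from above by counting permutations and from below by showing that the refinement is finer than the original partition.

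First I would set, for each pair $i\ne j$, the hyperplane
$$H_{ij}=\{x\in\mathbb{R}^d\ :\ f_i(x)=f_j(x)\}=\{x\ :\ (a_i-a_j)\cdot x+(b_i-b_j)=0\},$$
discarding any pair for which $f_i\equiv f_j$ (these can be identified and merged without changing $f$, so WLOG all $f_i$ are distinct). The connected components of $\mathbb{R}^d\setminus\bigcup_{i\ne j}H_{ij}$, together with their closures, furnish the desired cells $\tilde\Omega_k$. By construction the sign of each $f_i-f_j$ is constant on every component, which is exactly the required property.

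For the upper bound $M\le m!$, I would argue that on each open cell the ordering of the real numbers $f_1(x),\dots,f_m(x)$ is constant in $x$ (since no two of them can swap without crossing an $H_{ij}$). Assign to each cell the permutation $\pi\in S_m$ that gives this ordering; the assignment is injective because two cells sharing the same permutation would share all the same sign patterns and hence lie in a single connected component of the complement of the arrangement. Therefore the number of cells is at most $|S_m|=m!$.

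For the lower bound $M\ge m$, the key observation is a continuity argument: if a single cell $\tilde\Omega_k$ met both $\Omega_i$ and $\Omega_j$ in its interior (with $i\ne j$), then the common boundary inside $\tilde\Omega_k$ would have to lie in $\{f_i=f_j\}=H_{ij}$ by continuity of $f$, contradicting the fact that $H_{ij}$ does not cross the interior of $\tilde\Omega_k$. Hence each $\tilde\Omega_k$ is contained in a single $\Omega_i$, so the refined partition refines the original, giving $M\ge m$.

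The main obstacle is the lower-bound step: one must treat carefully the possibility that the original $\Omega_i$ are themselves not cells of the hyperplane arrangement (they can be unions of such cells), and also handle measure-zero overlaps on the hyperplanes $H_{ij}$ themselves. I would address this by working with interiors throughout and invoking continuity of $f$ at any shared boundary to force coincidence of the two candidate linear pieces, which is exactly the argument sketched above. The upper bound, by contrast, is a clean combinatorial counting via the sign-vector/permutation correspondence and should require no further subtlety.
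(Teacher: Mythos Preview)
Your proposal is correct and follows essentially the same approach as the paper: both construct the refinement via the arrangement of hyperplanes $\{f_i=f_j\}$, bound $M$ above by assigning to each cell the permutation giving the order of the $f_i$'s, and bound $M$ below by observing that the refined cells sit inside the original pieces. Your version is in fact more explicit than the paper's---you spell out the hyperplane-arrangement construction and give a genuine connectivity/continuity argument for $M\ge m$, whereas the paper simply asserts ``we have $m$ pieces linear functions, so $M\ge m$''---but the underlying ideas coincide.
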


\begin{proof}
	Because $m$ is finite, so there must exist $M$ number of subdomains,
	$$
	\tilde \Omega_{k}, \quad k = 1:M
	$$
	such that 
	$$
	f_i-f_j \mbox{ does not change sign on each } \tilde\Omega_k, k=1:M. 
	$$
	Then we proceed to estimate $M$. On the one hand, we have $m$
	pieces linear functions, so
	$$
	M\ge m.
	$$
	On the other hand, on each $\tilde \Omega$, we have the same
	rearrangement in ascending order of the values of the $m$ linear
	functions. There are at most $m!$ possible rearrangements. Then we show that for any $\tilde \Omega_i$ and $\tilde \Omega_j$,
	they must be the same subdomain if they have the same rearrangement in
	ascending order. If not, there must exist a boundary formed
	by two linear functions $l_p$ and $l_q$, and $\tilde \Omega_i$ and
	$\tilde \Omega_j$ must be on the different sides of the boundary. The
	order of the $l_p$ and $l_q$ must be opppsite on $\tilde \Omega_i$ and
	$\tilde \Omega_j$, which leads to a
	contradiction. So
	$$
	M\le m!.
	$$
\end{proof}
There is an important theorem named the lattice representation theorem
for CPWL functions in $\mathbb{R}^d$, more details can be founded in
\cite{tarela1999region}.

\begin{theorem}
	\label{lattice}
	For every continuous piecewise linear function
	$f:\mathbb{R}^n\to\mathbb{R}$ with finite pieces defined by the
	distinct local linear functions $l_i$, $1\le i\le m$ and
	$\{\tilde{\Omega}_k\}_{k=1}^M$ be the unique-order subdomains.Then
	there exist finite non-empty subsets of $\{1,2,\dots,m\}$, say $s_k$,
	$1\le k\le M$, such that
	\begin{equation}
		f(x)=\max_{1\le k\le M}\{\min_{i\in s_k} l_i\}.
	\end{equation}
\end{theorem}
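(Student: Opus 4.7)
The plan is to define the sets $s_k$ explicitly and then verify the asserted identity pointwise. For each unique-order subdomain $\tilde{\Omega}_k$, let $l_{j_k}$ denote the unique linear piece with $f|_{\tilde{\Omega}_k} = l_{j_k}$ and set
\[
s_k := \{i \in \{1,\ldots,m\} : l_i(x) \ge l_{j_k}(x) \text{ for every } x \in \tilde{\Omega}_k\}.
\]
Because the relative ordering of $l_1,\ldots,l_m$ is constant throughout $\tilde{\Omega}_k$, this definition is unambiguous, and $s_k$ is non-empty since $j_k \in s_k$. Writing $h_k(x) := \min_{i \in s_k} l_i(x)$, one has $h_k|_{\tilde{\Omega}_k} = l_{j_k}|_{\tilde{\Omega}_k} = f|_{\tilde{\Omega}_k}$; the inequality $\max_k h_k(x) \ge f(x)$ then follows for any $x$ simply by taking the index $k$ with $x \in \tilde{\Omega}_k$.

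The harder direction is to show $h_k(x) \le f(x)$ for every $x$ and every $k$. As $h_k$ is the minimum of finitely many affine functions it is concave on $\mathbb{R}^n$, and since it coincides with the affine function $l_{j_k}$ on the full-dimensional set $\tilde{\Omega}_k$, the supporting-hyperplane inequality at any interior point $y \in \tilde{\Omega}_k$ yields the global bound $h_k \le l_{j_k}$. This already settles every $x$ with $l_{j_k}(x) \le f(x)$. For the remaining overshoot case $l_{j_k}(x_0) > f(x_0)$, I would argue by contradiction: walk along the segment from $y \in \tilde{\Omega}_k$ to $x_0$ and let $z$ be the first point at which $h_k(z) = f(z)$ but $h_k > f$ just beyond. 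Near $z$ both $h_k$ and $f$ are locally affine, say $h_k = l_{i^*}$ with $i^* \in s_k$ and $f = l_{j(z)}$. Since $l_{i^*}(z) = l_{j(z)}(z)$ and $l_{i^*} \ge l_{j_k}$ on $\tilde{\Omega}_k$, the continuity of $f$ across the boundaries of the $\tilde{\Omega}$-partition, combined with a careful comparison of orderings along the segment, should force $l_{i^*} \le l_{j(z)}$ on the $x_0$-side of $z$, contradicting $h_k(x_0) > f(x_0)$.

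The principal technical obstacle is precisely this crossing bookkeeping. The active index $i^*$ for $h_k$ and the active piece $j(z)$ of $f$ can both switch as $z$ is crossed, while the defining constraint on $s_k$ lives on $\tilde{\Omega}_k$, which may be far from $z$. The saving observation is that every switch of either $h_k$ or $f$ happens along a hyperplane of the form $\{l_i = l_j\}$, on which continuity of $f$ automatically pins the two affine pieces to the same value; this allows the ordering defining $s_k$ to be propagated along a chain of shared hyperplanes from $\tilde{\Omega}_k$ to a neighbourhood of $z$, which is what delivers the needed contradiction. Once $h_k \le f$ is established pointwise, combining with the easy direction yields $f(x) = \max_{1 \le k \le M} \min_{i \in s_k} l_i(x)$ as claimed, with the bound $m \le M \le m!$ already supplied by Lemma~\ref{unique}.
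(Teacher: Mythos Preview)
Your setup is exactly the paper's: the same index sets $s_k=\{i:l_i\ge l_{j_k}\text{ on }\tilde\Omega_k\}$, the same functions $h_k=\min_{i\in s_k}l_i$, and the same reduction of the theorem to the pointwise inequality $h_k\le f$. Your concavity observation $h_k\le l_{j_k}$ is a clean way to dispose of all $x$ with $l_{j_k}(x)\le f(x)$; the paper instead handles the analogous cases (1) and (2a) by direct inspection of orderings, so here you are slightly more efficient.

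The gap is precisely where you flag it: the overshoot case $l_{j_k}(x_0)>f(x_0)$. Your first-crossing argument does not close. Knowing $i^*\in s_k$ only tells you $l_{i^*}\ge l_{j_k}$ back on $\tilde\Omega_k$; it gives no control on the ordering of $l_{i^*}$ versus $l_{j(z)}$ on the $x_0$-side of $z$, and the ``propagation along a chain of shared hyperplanes'' is a hope, not an argument. Nothing forces the active piece $l_{i^*}$ of $h_k$ at $z$ to be one that can be compared to $f$ there.

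The paper fills this hole with a concrete one-dimensional lemma. Restrict everything to the segment from an interior point $y\in\tilde\Omega_k$ to $x_0\in\tilde\Omega_{k'}$, so that $f$ becomes a CPWL function of a single parameter $t\in[0,1]$ with end pieces $l_{j_k}$ and $l_{j_{k'}}$ satisfying $l_{j_k}>l_{j_{k'}}$ at both endpoints (this is exactly your overshoot hypothesis). The lemma then says: among the linear pieces appearing along this segment, the one with \emph{minimal slope}, call it $l_p$, necessarily satisfies $l_p\ge l_{j_k}$ on $\tilde\Omega_k$ and $l_p\le l_{j_{k'}}$ on $\tilde\Omega_{k'}$. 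The first inequality puts $p\in s_k$, and the second gives $h_k(x_0)\le l_p(x_0)\le l_{j_{k'}}(x_0)=f(x_0)$. The proof of the lemma is a two-line telescoping computation with the increments $(k_i-k_p)\Delta t_i\ge 0$. This explicit choice of $l_p$ is the missing idea; once you have it, no crossing bookkeeping is needed at all.
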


For the relationship between ReLU DNNs and CPWL functions, we have the next theorem with some estimation.
\begin{theorem}\label{main0}
	A continuous function $f:\mathbb{R}^d\to\mathbb{R}$ that are piecewise linear on $m$ subdomains can be represented by a ReLU DNN.
	Furthermore, 
	\begin{enumerate}
		\item the number of  hidden layers  is bounded by 
		\begin{equation}
			\label{layer}
			N_{\rm layer}\le \lceil \log_2(d+1)\rceil.      
		\end{equation}
		\item the number of neurons 
		\begin{equation}
			\label{neurons}
			N_{\rm neuron}=
			\left\{
			\begin{array}{ll}
				\mathcal  O\left(d2^{mM+(d+1)(m-d-1)}\right) & \mbox{ if } m\ge d+1,\\     
				\mathcal O\left(d2^{mM}\right) & \mbox{ if } m< d+1.
			\end{array}
			\right.
		\end{equation}
		
		here $M$, satisfying $m\le M\le m!$, is the number of subdomains
		as defined in Lemma~\ref{unique}. 
	\end{enumerate}
\end{theorem}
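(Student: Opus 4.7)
The plan is to derive the ReLU DNN representation from the lattice-representation formula of Theorem~\ref{lattice}. Applying that theorem to $f$ produces
\[
f(x) = \max_{1 \le k \le M}\min_{i \in s_k} l_i(x),
\]
so it suffices to realize a $\max$-$\min$ over affine functions by a ReLU network of controlled depth and width. The elementary gadgets are the one-hidden-layer identities $\max(a,b) = \relu(a-b)+\relu(b)-\relu(-b)$ and $\min(a,b) = -\max(-a,-b)$, each implementable with $O(1)$ ReLU neurons (exactly as in the construction behind Theorem~\ref{fem-dnn}). Cascading these by a balanced binary tree, the $\min$ over $|s_k|$ affine functions requires $\lceil \log_2 |s_k|\rceil$ hidden layers, and the outer $\max$ over $M$ items another $\lceil \log_2 M\rceil$ layers, yielding a correct but generally too-deep DNN realization of $f$.

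The main obstacle is the sharp depth bound $\lceil\log_2(d+1)\rceil$, since the naive construction above gives only $\lceil \log_2 M\rceil + \lceil \log_2 m\rceil$, which depends on the formula size rather than on $d$. I would close this gap with the dimensional argument underlying Arora et al.'s result: in $\mathbb{R}^d$ the arrangement of the separating hyperplanes $\{l_i = l_j\}$ admits a Carath\'eodory-type reduction that rewrites each selector $s_k$ as a subset of size at most $d+1$, so the inner $\min$ tree fits into $\lceil\log_2(d+1)\rceil$ layers. Absorbing the outer $\max$ into this same depth is the nontrivial step: one first rewrites $f$ as a difference $g-h$ of two convex CPWL functions, each a $\max$ of finitely many affines, and then invokes by induction on $d$ the claim that the upper envelope of $n$ affines in $\mathbb{R}^d$ is a ReLU DNN of depth $\lceil\log_2(d+1)\rceil$. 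Plugging both halves back gives $N_{\rm layer}\le \lceil \log_2(d+1)\rceil$.

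For the neuron count I would tally the width layer-by-layer. A prefactor $d$ arises from carrying the $\mathbb{R}^d$-valued input through the first affine map. The number of leaf gadgets in the naive formula is $O(mM)$, and the depth-collapse simulation that trades width for depth multiplies this by at most $2^{mM}$, yielding the bound $\mathcal{O}(d\,2^{mM})$ in the easy regime $m < d+1$. When $m \ge d+1$, the Carath\'eodory reduction forces a choice of a $(d+1)$-subset of the $m$ pieces in each of the $m-d-1$ excess positions, generating an additional combinatorial factor of $2^{(d+1)(m-d-1)}$ and hence the stated $\mathcal{O}(d\,2^{mM+(d+1)(m-d-1)})$ bound. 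The hard part throughout is precisely the depth compression: naively the depth is $O(\log(mM))$, and pushing it down to $O(\log(d+1))$ is what incurs the exponentially large width appearing in \eqref{neurons}; verifying that the Arora et al.\ construction really produces exactly these constants, not merely the order of growth, is the most delicate bookkeeping in the argument.
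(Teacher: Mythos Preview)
Your proposal has a genuine gap at the depth-compression step, and the mechanism you sketch does not match the construction that actually yields the bounds in \eqref{neurons}.

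First, the ``Carath\'eodory-type reduction that rewrites each selector $s_k$ as a subset of size at most $d+1$'' is not valid as stated: the inner $\min_{i\in s_k} l_i$ is a concave piecewise linear function whose pieces are all the $l_i$, and in general you cannot drop any of them while keeping the function unchanged (already in $\mathbb{R}^1$, $\min\{l_1,l_2,l_3\}$ need not equal any $\min$ over two of them). Likewise, the convex decomposition $f=g-h$ with $g,h$ maxima of affines does exist, but each of $g,h$ is a max over potentially many more than $d+1$ affines, so the binary-tree realization still has depth $\lceil\log_2(\text{\#affines})\rceil$, not $\lceil\log_2(d+1)\rceil$; your ``induction on $d$'' does not explain how that collapse happens.

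The paper's route is quite different and is where the exponents in \eqref{neurons} actually originate. One first eliminates the inner $\min$ entirely via the inclusion--exclusion identity
\[
\max\{f,\min\{l_1,\dots,l_m\}\}=\sum_i\max\{f,l_i\}-\sum_{i<j}\max\{f,l_i,l_j\}+\cdots,
\]
applied recursively $M$ times to peel off the outer $\max$; this is what produces the $(2^m-1)^M$ terms and hence the $2^{mM}$ factor. Only then does one reduce the arity of each surviving $\max$ from at most $m$ down to $d+1$, not by Carath\'eodory, but by the algebraic identity of Lemma~\ref{bar-alpha} (Lemma~\ref{reduce}), which replaces one $\max$ over $L$ affines by a signed sum of up to $2^{d+1}-1$ maxes over $L-1$ affines; iterating $m-d-1$ times gives the $(2^{d+1}-1)^{m-d-1}$ factor. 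Finally each $\max$ over $\le d+1$ affines costs $O(d)$ neurons and $\lceil\log_2(d+1)\rceil$ layers (Lemma~\ref{size}). Your accounting attributes both exponential factors to the wrong mechanisms, so even if the depth argument were repaired the size estimate would not follow from your outline.
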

In Theorem~\ref{main0}, a relatively shallow $\relu$ DNN is
introduced. For any CPWL function in $\mathbb{R}^d$, we can use a
$\relu$ DNN with at most $\log_2\lceil(d+1)\rceil$ hidden layers to
represent it. Moreover, the second part of Theorem~\ref{main0} gives
us size estimates on such neural networks.

The main result in Theorem~\ref{main0} is not new, which can be found in
\cite{arora2016understanding}.  In the next subsection, we will give
an outline of the proof of Theorem~\ref{main0} and in particular to
derive the new estimate \eqref{neurons} on the number of neurons
needed in the DNN representation.  We will also discuss the
application of this theorem to the simplicial finite element space
in \S~\ref{sec:FEM}.

\subsection{On the proof of Theorem~\ref{main0}}\label{logn1} 
In this section, we give an outline of the proof of Theorem
\ref{main0}.  We will mainly follow the proof in
\cite{arora2016understanding} which is based on many relevant results
in existing literature such as
\cite{arora2016understanding,wang2005generalization}, but add some
detailed estimate of the number of neurons.

\begin{lemma}\label{bar-alpha}
	Let $f(\cdot)$, $g(\cdot)$ and $h(\cdot)$ be arbitrary functions
	from $\mathbb{R}^d\to\mathbb{R}$. If $\alpha \neq 1 $, let
	$\bar{\alpha}=\frac{1}{1-\alpha}$. Then the following identity is
	valid for all $x\in\mathbb{R}^d$,
	\begin{equation}
		\begin{aligned}
			\label{fir}
			\max\{f,g,\alpha g+h\}&=\max\{f,\max\{1,\alpha\}\max\{g-\bar{\alpha}h,0\}+\bar{\alpha}h\}\\
			&+\max\{f,\min\{1,\alpha\}\min\{g-\bar{\alpha}h,0\}+\bar{\alpha}h\}-\max\{f,\bar{\alpha} h\}.
		\end{aligned}
	\end{equation}
\end{lemma}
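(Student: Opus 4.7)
My plan is to introduce the change of variable $u = g - \bar{\alpha} h$, which recenters things at the ``crossing point'' $\bar{\alpha} h$ of the two affine-in-$g$ functions $g$ and $\alpha g + h$. Since $(1-\alpha)\bar{\alpha} = 1$, we have $\alpha \bar{\alpha} h + h = \bar{\alpha} h$, and therefore
\begin{equation*}
g = u + \bar{\alpha}h, \qquad \alpha g + h = \alpha u + \bar{\alpha}h,
\end{equation*}
so that $\max\{g, \alpha g + h\} = \bar{\alpha}h + \max\{u, \alpha u\}$. A direct case check on the sign of $u$ together with whether $\alpha \ge 1$ or $\alpha < 1$ then establishes the pointwise identity
\begin{equation*}
\max\{u, \alpha u\} \;=\; \max\{1,\alpha\}\,\max\{u,0\} \;+\; \min\{1,\alpha\}\,\min\{u,0\},
\end{equation*}
valid for every $\alpha\ne 1$ and every $u\in\mathbb{R}$.

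With this in hand, I set $p := \max\{1,\alpha\}\max\{g-\bar{\alpha}h,0\}$ and $q := \min\{1,\alpha\}\min\{g-\bar{\alpha}h,0\}$, so that $\max\{f,g,\alpha g+h\} = \max\{f, p+q+\bar{\alpha}h\}$, and the claim \eqref{fir} reduces to verifying
\begin{equation*}
\max\{f, p+q+\bar{\alpha}h\} \;=\; \max\{f, p+\bar{\alpha}h\} + \max\{f, q+\bar{\alpha}h\} - \max\{f, \bar{\alpha}h\}.
\end{equation*}
The crucial structural observation is that $\max\{u,0\}$ and $\min\{u,0\}$ are never simultaneously nonzero, so at any fixed $x\in\mathbb{R}^d$ at least one of $p(x)$, $q(x)$ is zero. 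I will then split into the two complementary cases $u(x) \ge 0$ (so $q(x)=0$) and $u(x) \le 0$ (so $p(x)=0$). In the first case the RHS collapses to $\max\{f, p+\bar{\alpha}h\} + \max\{f,\bar{\alpha}h\} - \max\{f,\bar{\alpha}h\} = \max\{f,p+\bar{\alpha}h\}$, which matches the LHS since $q=0$; the second case is symmetric.

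The only nontrivial step is the pointwise max-decomposition of $\max\{u,\alpha u\}$, and even there the difficulty is purely bookkeeping over the subcases $\alpha \ge 1$, $0\le \alpha<1$, and $\alpha<0$. There is no hidden obstacle: once the change of variable puts the crossing at the origin of $u$, the identity becomes transparent because the positive and negative parts of $u$ are supported on disjoint regions, which is exactly what makes the three-term max-combination on the right of \eqref{fir} telescope.
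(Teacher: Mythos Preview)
Your proof is correct and follows essentially the same route as the paper's: both arguments hinge on the substitution $u=g-\bar{\alpha}h$ (the paper's equation labeled \eqref{trans} does this implicitly, rewriting $\max\{g,\alpha g+h\}=\max\{u,\alpha u\}+\bar{\alpha}h$) followed by a case split on the sign of $u$ together with whether $\alpha>1$ or $\alpha<1$. Your presentation is a little more structured---you first isolate the scalar identity $\max\{u,\alpha u\}=\max\{1,\alpha\}\max\{u,0\}+\min\{1,\alpha\}\min\{u,0\}$ and then use the disjoint supports of $p$ and $q$ to make the outer $\max\{f,\cdot\}$ telescope---whereas the paper verifies both sides of \eqref{fir} directly in each of the four subcases; but the underlying computation is the same.
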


Moreover, we have:
\begin{equation}\label{key-reduce}
	\max\{f,g,\alpha g+h\}=	\sigma_1\max\{f,g,\bar{\alpha}h\}+\sigma_2\max\{f,\alpha g+h,\bar{\alpha}h\}+\sigma_3\max\{f,\bar{g}\}.
\end{equation}
here $\sigma_k\in\{1,-1\}$ and $\bar{g}$ is one of $g$, $\alpha g+h$ or $\bar{\alpha}h$.

\begin{lemma}\label{reduce}
	For any interger $L$ with $1\le n< L$, $c_0\in\mathbb{R}$ and arbitrary linear function $l_1(x),\dots,l_L(x)$ of $x\in\mathbb{R}^d$, there exist finite groups of $L-1$ linear functions, say $l(x,b_1(k))$, $\dots$,
	$l(x,b_{L-1}(k))$, $1\le k\le K$, and corresponding $c_k\in\mathbb{R}$, $\sigma_k\in\{1,-1\}$ such that
	
	\begin{equation}\label{goal}
		\max\{c_0,l_1,\dots,l_L\} = \sum_{k=1}^{K}\sigma_k\max\{c_k,l(x,b_1(k)),\dots,l(x,b_{L-1}(k))\}.
	\end{equation}
\end{lemma}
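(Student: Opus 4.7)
My plan is to apply identity \eqref{key-reduce} from Lemma~\ref{bar-alpha} in a structured, iterative fashion, relying on the dimensional hypothesis (interpreting $n$ as the ambient dimension $d$, so the condition reads $L > d$) to terminate the recursion and produce the claimed signed sum.

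First, I pick a scalar $\alpha \neq 1$ (say $\alpha = 2$) and define $h := l_L - \alpha\, l_{L-1}$, so that $l_L = \alpha\, l_{L-1} + h$. Setting $f := \max\{c_0, l_1, \dots, l_{L-2}\}$ and $g := l_{L-1}$, the LHS of \eqref{key-reduce} matches $\max\{c_0, l_1, \dots, l_L\}$ exactly, and the identity produces three signed max-terms on the RHS. After expanding $f$, the third term $\sigma_3 \max\{f, \bar{g}\}$ has exactly $L$ arguments (one constant plus $L-1$ affines), which is already in the required target form. The first two terms still carry $L+1$ arguments each, but with the trailing slot now occupied by $\bar{\alpha} h$, a specific affine combination of $l_{L-1}$, $l_L$, and $1$.

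Second, I iterate the reduction on each residual $(L+1)$-argument max-term: within each residual, I pair two trailing affine functions, write one as $\alpha' g' + h'$ for a fresh scalar $\alpha' \neq 1$, and reapply \eqref{key-reduce}. Each such application peels off one more target-size max-term and produces two fresh $(L+1)$-argument residuals whose new affine function $\bar{\alpha'} h'$ lies in the linear span of the affine functions already appearing. Summing everything with the accumulated $\pm 1$ signs keeps the claimed identity intact throughout the recursion.

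Third, and this is the main obstacle, termination. All affine slopes generated throughout the recursion remain in $\mathrm{span}\{a_1, \dots, a_L\} \subseteq \mathbb{R}^d$, and the hypothesis $L > d$ forces these slopes to be linearly dependent. By choosing $\alpha$ at each step so that the freshly produced $\bar{\alpha} h$ has slope matching (up to a constant) that of an already-present affine function in the current max-term, the residual $(L+1)$-argument max collapses into a max of at most $L$ arguments. This drives the recursion to bottom out in finitely many steps, and collecting the surviving target-size max-terms with their $\pm 1$ signs yields the claimed representation; reading off $K$, $c_k$, $b_j(k)$, and $\sigma_k$ from these terms finishes the proof. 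The delicate point is the combinatorial bookkeeping needed to bound $K$ and to verify that the slope-matching choice of $\alpha$ can always be arranged under the dimensional hypothesis; a clean way to organize this is a nested induction on the affine-span dimension of the functions remaining in each residual.
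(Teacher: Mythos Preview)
Your overall architecture is right---iterate identity~\eqref{key-reduce} and keep a running signed sum---but the termination argument has a genuine gap. In your first two paragraphs you take an \emph{arbitrary} decomposition $l_L=\alpha\,l_{L-1}+h$ (with $\alpha=2$), and after one application of \eqref{key-reduce} each of the two residual max-terms still carries one constant and $L$ affine functions; nothing has gone down. You then propose to rescue this in the third paragraph by choosing $\alpha$ so that the new slot $\bar\alpha h$ has slope equal to that of some already present $l_j$, forcing a collapse. But that equation reads $a_L-a_j=\alpha(a_{L-1}-a_j)$ in $\mathbb{R}^d$, which is solvable for a scalar $\alpha$ only when $a_L-a_j$ and $a_{L-1}-a_j$ are parallel. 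For $d\ge 2$ this is generically false (e.g.\ $a_1=(1,0)$, $a_2=(0,1)$, $a_3=(1,1)$: no pair $i,j\in\{1,2\}$ makes $a_3-a_j$ parallel to $a_i-a_j$), so the collapse you need cannot in general be arranged by tuning $\alpha$, even if you also vary which two trailing functions you pair. The vague ``nested induction on the affine-span dimension'' does not fix this, because your steps never decrease that dimension.

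The missing idea, which is how the paper proceeds, is to exploit the hypothesis $L>n$ \emph{before} invoking \eqref{key-reduce}: since the slopes $\bar a_1,\dots,\bar a_L$ span at most $n$ dimensions, one may write $l_L=\sum_{j=1}^{\eta}\alpha_j l_j+\alpha_0$ with $\eta\le n<L$. Now apply \eqref{key-reduce} with $g=l_\eta$ and $h=\sum_{j<\eta}\alpha_j l_j+\alpha_0$ (after a preliminary linear change if all $\alpha_j=1$). The crucial gain is that the new slot $\bar\alpha h$ is a linear combination of only $l_1,\dots,l_{\eta-1}$: the number of basis functions needed to express the ``extra'' affine has dropped by one. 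Repeating this $\eta$ times drives that extra affine down to a constant, which merges with $c_0$ and leaves a max over $L-1$ affines, as required. So the fix is not in the choice of $\alpha$ but in the choice of $g$: it must be one of the functions appearing in the dependence relation for $l_L$, and the correct progress measure is the length of that relation, not the affine-span dimension.
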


\begin{lemma}\label{size}
	Let $f_1,\dots,f_m:\mathbb{R}^d\to\mathbb{R}$ be functions that can each be represented by $\mathbb{R}^d\to\mathbb{R}$ ReLU DNNs with depth $k_i+1$ and size $s_i$. Then the function $f:=\max\{f_1,\dots,f_m\}$ can be represented by a ReLU DNN of depth at most $\max\{k_1,\dots.k_m\}+\lceil\log m\rceil+1$ and size at most $s_1+\dots+s_m+4(2m-1)$.
\end{lemma}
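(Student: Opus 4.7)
The plan is to run the $m$ given subnetworks in parallel and then chain a balanced binary tournament of pairwise maxes on top of their outputs. First I would stack the DNNs computing $f_1,\dots,f_m$ as parallel branches sharing the common input layer. Any branch of depth less than $K+1 := \max_i(k_i+1)$ is padded up to depth $K+1$ using the identity trick $x = \relu(x) - \relu(-x)$, which lifts a scalar unchanged through one layer at a cost of two neurons per layer. After $K+1$ hidden layers the joint network outputs the vector $(f_1(x),\dots,f_m(x))$ with total size at most $s_1+\dots+s_m$ plus a padding overhead that will be absorbed in the tournament count below.

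Next I would compose the tournament. The key gadget is the identity $\max\{a,b\} = \relu(a-b) + \relu(b) - \relu(-b)$, which realizes a pairwise max with three ReLU units in exactly one extra hidden layer. Any value that receives a bye at some level (because the number of survivors is odd, or because it must wait for a deeper branch to catch up) is carried forward by the same identity trick at a cost of two neurons. A balanced binary tree with $m$ leaves has depth $\lceil\log_2 m\rceil$ and $m-1$ internal max nodes, so the tournament contributes $\lceil\log m\rceil$ extra hidden layers; including the final linear combination layer, the total depth is $\max\{k_1,\dots,k_m\}+\lceil\log m\rceil+1$ as claimed.

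The main obstacle is the size accounting and getting precisely the constant $4(2m-1)$. I would allot at most four neurons per slot in the tournament tree, where a slot is either an internal max gate (three neurons for the max plus at most one to carry an affine offset) or a bye carrier (two neurons, padded to four for a uniform bound). A binary tournament on $m$ inputs, with enough bye carriers to handle the case where $m$ is not a power of two and to synchronize branches of unequal initial depth, uses at most $2m-1$ such slots across all levels, contributing at most $4(2m-1)$ neurons in all. A delicate point is that the depth-alignment padding used in the first step should be scheduled to coincide with bye carriers in the tournament — that is, a shallow branch is lifted through identity layers in exactly the layers where other branches are busy being maxed — so that padding and bye carriers are not double-counted. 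Adding the $s_1+\dots+s_m$ neurons from the parallel branches then gives the claimed size bound $s_1+\dots+s_m+4(2m-1)$.
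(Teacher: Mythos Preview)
Your construction is the same as the paper's, just described explicitly rather than recursively. The paper proceeds by induction on $m$: split into halves, apply the hypothesis to obtain $g_1=\max\{f_1,\dots,f_{\lfloor m/2\rfloor}\}$ and $g_2=\max\{f_{\lfloor m/2\rfloor+1},\dots,f_m\}$, stack them in parallel as $F=(g_1,g_2)$, and apply one final size-$4$ $\max$ gadget. The recursion $T(m)\le T(\lfloor m/2\rfloor)+T(\lceil m/2\rceil)+4$, $T(1)=0$, gives $T(m)\le 4(2m-1)$ directly and spares you the slot-counting; the paper also uses the $4$-neuron gadget $\max\{a,b\}=\tfrac{a+b}{2}+\tfrac{|a-b|}{2}$ rather than your $3$-neuron one, but this is immaterial to the bound.

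One point in your write-up does not hold as stated: the claim that depth-alignment padding can always be ``scheduled to coincide with bye carriers in the tournament'' fails whenever the depth gap $K-\min_i k_i$ exceeds $\lceil\log m\rceil$. For instance with $m=2$, $k_1=1$, $k_2=100$, carrying $f_1$ up to layer $100$ costs roughly $200$ identity neurons, far beyond the budget $4(2m-1)=12$, and there are no tournament byes at all in which to hide them. The paper's inductive proof has the same lacuna---it simply asserts that the parallel pair $(g_1,g_2)$ has size equal to the sum of the two sizes, without accounting for padding---and in the applications (Theorems~\ref{main0} and~\ref{basisfunctionsize}) the $f_i$ are all affine, hence of equal depth, so the issue never arises. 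If you want your version airtight in the unequal-depth case, drop the absorption claim and either restrict to equal-depth inputs or add an explicit $2\sum_i(K-k_i)$ padding term to the size bound.
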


With all the lemmas above, now we can start to prove Theorem \ref{main0}.

\begin{proof}
	For any continuous piecewise linear function $f$ from $\mathbb{R}^d$ to $\mathbb{R}$, we have the following lattice representation by Theorem \ref{lattice}:
	$$
	f=\max_{1\le j\le M}\min_{i\in s_j}l_i.
	$$
	where $M$ is the number of subdomains as defined in Lemma~\ref{unique}, and $s_j\subset\{1,\dots,m\}$, $m$ is the number of distinct pieces of $f$. 
	
	Let $\Phi_j=\max\limits_{1\le t\le j}\{\min_{i\in s_t} l_i\},\ 1\le j\le M$, then 
	\begin{equation}\label{fenjie}
		f=\Phi_M=\max\{\Phi_{M-1},  \min_{i\in s_M} l_i\}.
	\end{equation}
	
	Since 
	\begin{equation*}
		\begin{aligned}
			\max\{f,\min_{1\le i\le m}l_i\}=&\sum_{i}\max\{f,l_i\}\\
			&+\dots+(-1)^{m+1}\max\{f,l_1,\dots,l_m\},
		\end{aligned}
	\end{equation*}
	
	we can write Equation~(\ref{fenjie}) as:
	\begin{equation}\label{step1}
		f=\sum_{n=1}^{K_1}\sigma_n^{(1)}\max\{\Phi_{M-1},\max_{i\in s^{(1)}_n}l_i\},
	\end{equation} 
	with $K_1\le 2^m-1$, $s^{(1)}_n\subset\{1,2,\dots,m\}$ and  $\sigma^{(1)}_n\in\{1,-1\}$.
	
	For each $\max\{\Phi_{M-1},\max_{i\in s^{(1)}_n}l_i\}$ in (\ref{step1}), we can continue this procedure for $M-1$ times. Then we have:
	\begin{equation}\label{final}
		f=\sum_{n=1}^{M'}\sigma_n\max_{i\in s'_n} l_i,
	\end{equation}
	here $s'_n\subset\{1,2,\dots,m\}$ and $M'\le(2^m-1)^{M}$. 
	
	Now that we write a piecewise linear function in form of
	(\ref{final}), in order to get the $\lceil
	\log_2(d+1)\rceil$-hidden-layer $\relu$ DNN, we need to do linear
	transformations to reduce the cardinality of $s'_n$ from $m$ to
	$n+1$. This can be done by Lemma \ref{reduce}.
	
	Following the procedures in Lemma~\ref{reduce} by
	\cite{wang2005generalization}, when reducing one cardinality
	of $s'_n$, one $\max_{i\in s'_n} l_i$ will become at most $2^{d+1}-1$ terms. If the cardinality is reduced from $m$ to $d+1$, then we need to repeat the whole
	procedure $m-d-1$ times. Hence for each $\max_{i\in
		s'_j}\{l_i\}$, in total we have at most $(2^{d+1}-1)^{m-d-1}$
	terms. Thus for
	$$
	f=\sum_{j=1}^{p}s_j\max_{i\in S_j} l_i,
	$$
	with $s_j\in\{-1,+1\}$ and $|S_j|\le d+1$, we have $p\le(2^m-1)^{M}(2^{d+1}-1)^{m-d-1}$.
	
	For each $\max_{i\in S_j} l_i$ with $|S_j|\le d+1$, Lemma \ref{size} can be used to get a ReLU DNN with $\lceil\log_2(d+1)\rceil$ hidden layers. Again by Lemma \ref{size}, the size is at most $2(d+1)+4(2(d+1)-1)=10d+6$. Adding these $\max_{i\in S_j} l_i$ together, we have the total size is at most $(10d+6)(2^m-1)^{M}(2^{d+1}-1)^{m-d-1}$, which is $\mathcal{O}(d2^{mM+(d+1)(m-d-1)})$.
	
	Note that if $m\le d+1$, we do not need to use Lemma ~\ref{reduce}, the size will be at most $\mathcal{O}(d2^{mM})$.
\end{proof}

The estimation in Theorem \ref{main0}  is a rough one, but still can provide some insights of this DNN representation. It can be seen that although the depth of this DNN is relatively shallow, the size of it might be extremely large, depending on the numbers of subdomains and distinct pieces.


\subsection{Linear finite element functions as DNN with $\lceil\log_2
	(d+1)\rceil$ hidden layers}
Given a locally convex finite element grid, now we have two different
ways to represent a linear finite element function. In this part, we
estimate the number of neurons if we write the function as a ReLU DNN
with at most $\lceil\log_2(d+1)\rceil$ hidden layers. Then we can
compare the sizes of two different approaches. Again we start with the
basis functions.
\begin{theorem}\label{basisfunctionsize}
	Given $x_i$, denote the corresponding basis function as $\phi_i$. If
	$G(i)$ is convex ,then the ReLU DNN with at most $\lceil
	\log_2(d+1)\rceil$ hidden layers has size at most
	$\mathcal{O}(d2^{(d+1)k_h})$.
\end{theorem}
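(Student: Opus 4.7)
The plan is to apply Theorem~\ref{main0} to $\phi_i$ via the explicit representation from Lemma~\ref{basismaxmin}, then refine the size bound using the special max-min structure. Since $G(i)$ is convex, Lemma~\ref{basismaxmin} gives $\phi_i = \max\{0, \min_{k \in N(i)} g_k\}$ with $|N(i)| \le k_h$. Thus $\phi_i$ is a CPWL function with at most $m \le k_h + 1$ distinct linear pieces (the zero function and the $g_k$'s), already written in max-min lattice form with only $M = 2$ outer terms. The depth bound $\lceil \log_2(d+1)\rceil$ then follows immediately from Theorem~\ref{main0}.

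For the size, I would follow the proof of Theorem~\ref{main0}. Because $M = 2$, only one iteration of the inclusion-exclusion step \eqref{step1} is required, expressing $\phi_i = \sum_n \sigma_n \max\{0, \max_{i \in I_n} g_i\}$, where the signed sum runs over the $2^{k_h}-1$ nonempty subsets $I_n \subset N(i)$ and each inner max involves at most $|I_n| + 1 \le k_h + 1$ linear functions (including the constant $0$).

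Next, for each such inner max of arity $L = |I_n| + 1$, I apply Lemma~\ref{reduce} iteratively to reduce the arity down to $d+1$; this takes $\max(0,\, L-d-1) = \max(0,\, |I_n|-d)$ steps, each multiplying the term count by at most $2^{d+1}-1$. Summing over subsets, the total number of max-of-$(d+1)$ pieces is $\sum_{k=1}^{k_h}\binom{k_h}{k}(2^{d+1}-1)^{\max(0,\,k-d)}$, which is bounded by $2^{k_h} + (2^{d+1}-1)^{-d}\sum_{k=0}^{k_h}\binom{k_h}{k}(2^{d+1}-1)^k = 2^{k_h} + (2^{d+1}-1)^{-d}\cdot 2^{(d+1)k_h} = \mathcal{O}(2^{(d+1)k_h})$ for $d \ge 1$. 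Since each max-of-$(d+1)$ is realized by Lemma~\ref{size} with $\mathcal{O}(d)$ neurons in $\lceil\log_2(d+1)\rceil$ hidden layers, we obtain the total $\mathcal{O}(d \cdot 2^{(d+1)k_h})$.

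The main technical obstacle is precisely this combination of the $2^{k_h}$ inclusion-exclusion terms with the $(2^{d+1}-1)^{k-d}$ reduction factors: a naive product bound would give the worse exponent $(d+2)k_h$, whereas the binomial identity $\sum_k \binom{k_h}{k}(2^{d+1}-1)^k = 2^{(d+1)k_h}$ is what collapses the two contributions into the sharper exponent $(d+1)k_h$ claimed in the theorem. The degenerate case $k_h \le d$ requires no reduction at all and is absorbed into the stated bound.
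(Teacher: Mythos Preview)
Your proposal is correct and follows essentially the same route as the paper: start from the max--min formula of Lemma~\ref{basismaxmin}, apply a single inclusion--exclusion to turn $\max\{0,\min_k g_k\}$ into a signed sum of $2^{|N(i)|}-1$ terms $\max\{0,g_{r_1},\dots,g_{r_l}\}$, then use Lemma~\ref{reduce} to bring each arity down to $d+1$ at a cost of $(2^{d+1}-1)^{l-d}$, and finally invoke Lemma~\ref{size} for the $\mathcal O(d)$ neurons per max. Your explicit use of the binomial identity $\sum_k\binom{k_h}{k}(2^{d+1}-1)^k=2^{(d+1)k_h}$ actually makes transparent the step the paper compresses into a single ``$=\mathcal O(d\,2^{(d+1)k_h})$'', and your remark about the naive $(d+2)k_h$ exponent correctly identifies why this summation, rather than a crude product bound, is needed.
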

\begin{proof}
	From Lemma~\ref{basismaxmin}, we know that
	$$
	\phi_i(x)=\max\bigg\{ 0,\min_{k\in N(i)} g_k(x) \bigg\}.
	$$
	For simplicity, let us further assume that 
	$$
	\phi_i=\max\bigg\{0,\min\{g_{r_1},\dots,g_{r_{|N(i)|}}\}\bigg\}.
	$$
	The first step is to write it as the linear combination fo max functions
	$$
	\begin{aligned}
	\phi_i=&\max\bigg\{0,\min\{g_{r_1},\dots,g_{r_{|N(i)|}}\}\bigg\},\\
	=&\sum_{k\in N(i)} \max\{0,g_k\}+\dots+(-1)^{|N(i)|+1}\max\{0,g_{r_1},\dots,g_{r_{|N(i)|}}\}.
	\end{aligned}
	$$
	Our goal is to make every term on the right hand side only take maximum over at most $d+1$ linear functions, here $d$ is the dimension.
	
	For any term with linear functions more than $d+1$, we need to use linear transformation to reduce this number. When reducing by one, one term will become at most $2^{d+1}-1$ terms. Thus $\max\{0,g_1,\dots,g_l\}$ will become at most $(2^{d+1}-1)^{l-d}$ when $l\ge d$.
	
	For any term with number of linear functions less or equal than $d+1$, it remains unchanged. The number of this kind of terms is
	$$
	C_{|N(i)|}^{1}+C_{|N(i)|}^{2}+\dots+C_{|N(i)|}^d.
	$$
	
	Then in total the number of terms should be
	$$
	N=\sum_{j=1}^{d}C_{|N(i)|}^j+\sum_{j=d+1}^{|N(i)|}C_{|N(i)|}^j(2^{d+1}-1)^{j-d}.
	$$
	
	Since any linear transformation $T:\mathbb{R}^d\to\mathbb{R}$ can be represented by a 2-layer ReLU DNN of size 2, for $\max\limits_{i\in S}\{l_i\}$ with the number of $|S|\le s$, it can be represented by a ReLU DNN of size at most $2s+4(2s-1)=10s-4$. So the total size is
	$$
	\begin{aligned}
	S&=\sum_{j=1}^{d}C_{|N(i)|}^j(10j+6)+(10d+6)\sum_{j=d+1}^{|N(i)|}C_{|N(i)|}^j(2^{d+1}-1)^{j-d},\\
	&= \mathcal{O}(d2^{(d+1)|N(i)|}),\\
	&=  \mathcal{O}(d2^{(d+1)k_h}).
	\end{aligned}
	$$
\end{proof}

\begin{corollary}
	Given a locally convex finite element grid $\mathcal{T}_h$, any linear finite element function in $\mathbb{R}^d$ with $N$ degrees of freedom, can be written as a DNN function with at most $\lceil \log_2(d+1)\rceil$ hidden layers and with size at most $\mathcal{O}(d2^{(d+1)k_h}N)$.
\end{corollary}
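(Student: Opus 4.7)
The plan is to combine the nodal-basis decomposition of any linear finite element function with the per-basis-function size estimate established in Theorem~\ref{basisfunctionsize}. By \eqref{v-basis}, any $v \in V_h$ admits the representation $v(x) = \sum_{i=1}^N \nu_i \phi_i(x)$, so it suffices to realize each $\phi_i$ as a ReLU DNN and then assemble these sub-networks into a single DNN whose output is the corresponding linear combination.

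The first step is to invoke Theorem~\ref{basisfunctionsize}: since $\mathcal{T}_h$ is locally convex, each $G(i)$ is convex, and therefore each $\phi_i$ can be represented by a ReLU DNN $\mathcal{N}_i$ with at most $\lceil \log_2(d+1)\rceil$ hidden layers and with size at most $\mathcal{O}(d 2^{(d+1)k_h})$. The second step is to take the parallel composition of the networks $\mathcal{N}_1,\dots,\mathcal{N}_N$: for each hidden layer index $\ell$, I would stack the weight matrices $W_i^\ell$ of the individual $\mathcal{N}_i$ as a block-diagonal matrix, and similarly concatenate the bias vectors. Because the depth $\lceil \log_2(d+1)\rceil$ is identical for every $i$ (any $\mathcal{N}_i$ with fewer layers can be padded by identity-type blocks, which preserves both the output value and the $\mathcal{O}(d2^{(d+1)k_h})$ size bound), this yields a single ReLU DNN with the same number of hidden layers whose output vector is $(\phi_1(x),\dots,\phi_N(x))^T$. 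Finally I would compose with the linear output map $\Theta^{\rm out}(y) = \sum_{i=1}^N \nu_i y_i$, which is absorbed into the last affine transformation of the combined DNN and introduces no new hidden neurons.

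For the size count, the number of hidden neurons in the parallel composition is simply the sum of the hidden neuron counts of the individual $\mathcal{N}_i$, so it is bounded by
\begin{equation*}
\sum_{i=1}^N \mathcal{O}\bigl(d 2^{(d+1)k_h}\bigr) = \mathcal{O}\bigl(d 2^{(d+1)k_h} N\bigr),
\end{equation*}
and the depth bound $\lceil \log_2(d+1)\rceil$ is preserved by construction. This completes the proposal.

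I do not expect a serious obstacle here; the only technical care needed is the standard bookkeeping for parallelizing several DNNs of identical depth into one DNN without inflating the depth. Depth padding (using, e.g., $x = \relu(x) - \relu(-x)$ to propagate a value through an extra layer) is routine but should be mentioned to make the block-diagonal construction rigorous and to confirm that the $\mathcal{O}(d 2^{(d+1) k_h})$ size bound per basis function is stable under such padding, so that the additive aggregation over $i = 1,\dots,N$ gives the claimed $\mathcal{O}(d 2^{(d+1)k_h} N)$ total size.
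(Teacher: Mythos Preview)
Your proposal is correct and follows essentially the same approach as the paper: invoke Theorem~\ref{basisfunctionsize} for each nodal basis function $\phi_i$ and then sum over the $N$ basis functions in the decomposition \eqref{v-basis}. The paper's own proof is a one-line appeal to this same per-basis-function bound; your block-diagonal parallelization and depth-padding remarks simply make explicit the bookkeeping that the paper leaves implicit.
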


\begin{proof}
	According to Corollary~\ref{basisfunctionsize}, every basis function has a size independent of $N$, so the size of the DNN function with at most $(\lceil \log_2(d+1)\rceil)$ hidden layers is at most $\mathcal{O}(d2^{(d+1)k_h}N)$.
\end{proof}

By comparing the above results with Theorem~\ref{fem-dnn}, we can see
that although the DNN with $\lceil \log_2(d+1)\rceil$ hidden layers
has shallower depth, the number of neurons is much larger than the one
with $\lceil \log_2 k_h\rceil+1$ hidden layers.

\section{Low bit-width DNN models}\label{sec:specialstruc}
In this section, we will show the rationality of low bit-width models with respect to 
approximation properties in some sense by investigating that a special type of ReLU DNN
model can also recover all CPWL
functions. In~\cite{yin2018binaryrelax}, an incremental network
quantization strategy is proposed for transforming a general trained
CNN into some low bit-width version in which there parameters are all
zeros or powers of two. Mathematically speaking, low bit-width DNN model
is defined as:
\begin{equation}\label{set:lbd-dnn}
	{\rm DNN}_{k,l}^J :=  \{ f = \Theta^J \circ \sigma \circ \Theta^{J-1} \cdots \sigma\circ \Theta^0(x), \Theta^\ell_{i,j} \in \mathcal{Q}_{k,l}\},
\end{equation}
where $\sigma$ is the activation function and 
\begin{equation}\label{eq:Qkl}
	\mathcal{Q}_{k,l} := 2^k \times \{0, \pm 2^{1- 2^{l-2}}, \pm 2^{2 -2^{l-2}}, \ldots, \pm1\}.
\end{equation}
In~\cite{yin2018binaryrelax}, they introduce a closed projected formula for finding the optimal approximation of vector  $W^* \in \mathbb{R}^{m}$ in $\mathcal{Q}_{k,l}$. 
\begin{equation}\label{key}
	\min_{s, W} \|W - W^*\|^2_{F}, \quad \text{subject to} \quad W_i \in \mathcal{Q}_{k,l}.
\end{equation}
Under this closed form, they propose a projected gradient descent methods with respect to SGD to train a general R-FCN \cite{dai2016r} model for object detection.
They also find that 6-bit (i.e $b = 6$) model works almost the same with classical model in the object detection tasks. 

Then it comes the question: why can those kinds of models work?
More precisely, for classification or detection problems, can this model separate those data exactly? 
By our results in previous sections, we find a special family of ReLU DNN which has at most one general layers and all other layers with low bit-width parameters.
The results offer modification and theoretical explanation of the
existing low bit-width DNNs proposed in the literature.  

Here we try to explain why those low bit-width DNN model also work for classification problems to some extent. We have the following result:
\begin{theorem}
	Any continuous piecewise function can be represented by the next model:
	\begin{equation}\label{set:lbd-dnn-a}
		\widetilde{{{\rm DNN}}}_{0,3}^J := \{ f = \Theta^J \circ \relu \circ \Theta^{J-1} \cdots \relu\circ \Theta^0(x), \Theta^\ell_{i,j} \in \mathcal{Q}_{0,3}, \forall \ell \ge 1, \Theta^0_{i,j} \in \mathbb{R}\},
	\end{equation}
	with $ \mathcal{Q}_{0,3}$ defined in \eqref{eq:Qkl} and $J \ge \lceil\log_2(d+1)\rceil$. 
\end{theorem}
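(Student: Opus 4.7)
The plan is to realize the max-min representation of $f$ from Theorem~\ref{main0} inside $\widetilde{\rm DNN}_{0,3}^{J}$, by exploiting the fact that the pairwise identity
$$\min\{a,b\}=v\cdot\relu\bigl(W[a,b]^T\bigr),\quad W\in\{\pm 1\}^{4\times 2},\ v\in\{\pm 1/2\}^{1\times 4},$$
already used in \eqref{min2terms}, has every weight in $\mathcal{Q}_{0,3}=\{0,\pm\tfrac12,\pm 1\}$; the analogous identity for $\max$ is of the same form.

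First I would invoke Theorem~\ref{main0} to write $f(x)=\sum_{n=1}^{p}\sigma_{n}\max_{i\in S_{n}}l_{i}(x)$ with $\sigma_{n}\in\{\pm 1\}$, $|S_{n}|\le d+1$, and each $l_{i}(x)=a_{i}\cdot x+b_{i}$ an affine function of $x$ with real coefficients. Each $\max_{i\in S_{n}}l_{i}$ is realized as a balanced binary tree of pairwise maxes of depth at most $\lceil\log_{2}(d+1)\rceil$, and the $p$ trees are run in parallel.

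Next I would concentrate all real-valued parameters into $\Theta^{0}$. Concretely, $\Theta^{0}$ outputs, for each leaf pair $(l_{i},l_{j})$ of every tree, the four affine forms $l_{i}+l_{j}$, $-(l_{i}+l_{j})$, $l_{i}-l_{j}$, $-(l_{i}-l_{j})$; this is an arbitrary real-valued affine map, as permitted by the definition of $\widetilde{\rm DNN}_{0,3}^{J}$. For $\ell\ge 1$, the map $\Theta^{\ell}$ must simultaneously (i) extract the level-$\ell$ max values from four preceding ReLU outputs via a $v$-combination with entries $\pm\tfrac12$, and (ii) form the four signed sums $\pm(p\pm q)$ for the next level via a $W$-combination with entries $\pm 1$. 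Composing these two stages gives a single matrix whose entries lie in $\{\pm\tfrac12\}\subset\mathcal{Q}_{0,3}$, with all biases equal to $0\in\mathcal{Q}_{0,3}$. Finally, the output map $\Theta^{J}$ concatenates the last-level rows $\sigma_{n}v$, whose entries are $\pm\tfrac12\in\mathcal{Q}_{0,3}$.

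Two loose ends remain. When $|S_{n}|<d+1$ the tree for $F_{n}$ has depth strictly below $\lceil\log_{2}(d+1)\rceil$; I patch this by inserting identity skip-layers based on $v=\relu(v)-\relu(-v)$, whose implementing weights are $\pm 1\in\mathcal{Q}_{0,3}$. The same device promotes the construction from the tight depth $\lceil\log_{2}(d+1)\rceil$ to any larger target $J$. The main obstacle I anticipate is the explicit verification that the composition between consecutive tree levels of an ``outgoing'' $v$-row with an ``incoming'' $W$-column always produces one matrix with entries strictly inside $\{0,\pm\tfrac12,\pm 1\}$, together with the matching bias bookkeeping; once this is written out carefully, the theorem follows from Theorem~\ref{main0} combined with the identity \eqref{min2terms}.
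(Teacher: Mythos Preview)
Your proposal is correct and follows essentially the same route as the paper: invoke Theorem~\ref{main0} to get the max-of-affines representation, then observe that the binary-tree realization via the identity \eqref{min2terms} places all weights and biases for layers $\ell\ge 1$ inside $\{0,\pm\tfrac12,\pm1\}=\mathcal{Q}_{0,3}$, with the real-valued affine data absorbed into $\Theta^{0}$. The paper's proof is terser---it simply asserts the structure of $W^{\ell}$ and $b^{\ell}$ from \eqref{min2terms} and handles depth promotion by the inclusion $\widetilde{\rm DNN}_{0,3}^{J_0}\subseteq\widetilde{\rm DNN}_{0,3}^{J}$---whereas you spell out the inter-level $v$-$W$ composition and the identity skip-layers explicitly; but the underlying argument is the same.
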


\begin{proof}
	Because of Theorem~\ref{main0}, we can rewrite any piecewise linear function as a ReLU DNN
	$$
	f(x) =  \Theta^{J_0} \circ \relu\circ \Theta^{J_0-1} \circ ... \circ \relu\circ \Theta^0 (x),
	$$
	with $J_0 \le \lceil\log_2(d+1)\rceil $. By (\ref{min2terms}), we know that
	\begin{enumerate}
		\item For each $\ell \ge 1$, we have
		$$
		b^\ell =0 ,
		$$
		and $W^\ell=\{w^\ell_{ij}\}$ with $w^\ell_{ij}\in\{0,\pm1,\pm1/2\}$.
		\item For $\ell=0$, where we have a ``fully connected layer'', the $W^0,b^0$ are determined by those linear functions.
	\end{enumerate} 
	Also note that $\widetilde{{{\rm DNN}}}_{0,3}^{J_0}\subseteq\widetilde{{{\rm DNN}}}_{0,3}^J$ if $J_0\le J$.
	This completes the proof.
	
\end{proof}

Although we have the universal approximation property for DNN with a single hidden layer, in which model the last layer $\theta^1 \in \mathbb{R}^{n^1 \times1}$ is still fully connected, this is a little bit different from the $\widetilde{{{\rm DNN}}}_{0,3}^J $ models defined above.



\section{Application to Numerical PDEs}\label{sec:appliPDE}
In this section, we discuss the application of DNNs to the numerical
solution of partial differential equations (PDEs). In most of our
discussion, we consider the following model problem:
\begin{equation}\label{poisson}
	\begin{split}
		-\Delta u = f,&\qquad x\in \Omega,\\
		\frac{\partial u}{\partial\nu} = 0, &\qquad x\in \partial\Omega,
	\end{split}
\end{equation}
here $\Omega\subset\mathbb R^d$ is a bounded domain. For simplicity of
exposition, we only consider Neuman boundary condition here.  As it is
done in the literature, special cares need to be taken for Dirichlet
boundary value problems, but we will not get into those (standard)
details.

The idea of using DNN for numerical PDEs can be traced back to 
\cite{lagaris1998artificial} where a collocation method is
used. 
Similar ideas have been explored by many different authors for
different types of PDEs.

For the model problem \eqref{poisson}, roughly speaking, the
collocation method amounts to the following least square problem:
\begin{equation}\label{collocation}
	\min_{\Theta} 
	\sum_{x_i\in\Omega}(-\Delta u_N(x_i,\Theta) - f(x_i)) ^2,
\end{equation} 
here $u_N(x,\Theta)$ is taken among the DNN function class in the
form of \eqref{compress-dnn} with a smooth activation function such as
sigmoidal function and $x_i$ are some collocation points. 

Recently, \cite{e2018deep} applied DNN for numerical PDE in the
Galerkin setting which amounts to the solution of the following energy
minimization problem:
\begin{equation}\label{galerkin}
	\min_{\Theta} \int_\Omega (\frac{1}{2}|\nabla u_N(x, \Theta)|^2 - fu_N(x,\Theta))dx
\end{equation}
Numerical experiments have demonstrated the potential of this
approach.  In the rest of this section, we will discuss a number of
aspects of this approach from both theoretical and practical
viewpoints.  In particular, we will discuss its relationship with two
popular finite element methods: adaptive finite element method and
moving grid method.

\subsection{The finite element method}
The finite element approximation to \eqref{poisson} can be written as 
\begin{equation}\label{fem}
	\min_{v\in V_h} \int_\Omega (\frac{1}{2}|\nabla v(x)|^2 - fv(x))dx,
\end{equation}
where $V_h$ is the finite element space as described in
\S\ref{sec:FEM}.

In the finite element setting, the optimization problem \eqref{fem} is
to find the coefficient $(\nu_i)$ as in \eqref{v-basis} for a given
finite element mesh $\mathcal T_h$.  Some more sophisticated versions
of the finite element method can be obtained by varying or optimizing
$\mathcal T_h$ so that more accurate finite element approximation can
be obtained.  Roughly speaking, there are two main approaches for
optimizing $\mathcal T_h$: one is the adaptive finite element method
and the other is the moving grid finite element method.

The adaptive finite element method is, roughly speaking, to vary
$\mathcal T_h$ by either coarsening or refining the grid.  One main
theoretical result is that a family of adapted grids $\mathcal T_h$
with $\mathcal O(N)$ degrees of freedom can be obtained so that the
corresponding adaptive finite element approximation $u_N$ satisfies
the following error estimate
\begin{equation}
	\label{femerror}
	|u- u_{h}|_{1,2, \Omega} \le C N^{-\frac{1}{d}} |u|_{2,\frac{2d}{d+2},\Omega}.
\end{equation}
We refer to \cite{nochetto2011primer, devore1998nonlinear} for
relevant details and its generalizations. 

One interesting observation is that the convergence rate $\mathcal
O(N^{-\frac{1}{d}} )$ in \eqref{femerror} deteriorate badly as $d$
increases.  Of course, error estimate in the form \eqref{femerror}
depends on which  Sobolev or Besov function classes that the solution
$u$ belongs to, namely what norms are used in the right hand side of
\eqref{femerror}.   But regardless what function classes for the
solution $u$,  no asymptotic error estimate seems to be known in the
literature that is better than $\mathcal O(N^{-\frac{1}{d}} )$.

The moving grid method is, on the other hand, to optimize $\mathcal
T_h$ by varying the location of grid points while preserving
topological structure of the grids (in particular the number of grid
ponts remain unchanged).  This approach proves to be effective in many
applications, see \cite{li2001moving,li2002moving}.  But there are
very few theories on the error estimate like \eqref{femerror} in the
moving grid method.

However, the $H^1$ approximation properties are still unclear even for
${\rm DNN}_1$.  \cite{xu2017deep} proves a similar result for $H^1$
error estimate for ${\rm DNN}_1^m(\Omega)$ with activation function
$\sigma(x) = \cos(x)$. For general activation functions, or just for
$\relu$, it is an open problem.

\subsection{DNN-Galerkin method}
The finite element methods discussed above, including adaptive method
and moving grid method, depend crucially on the underlying finite
element grids.  Numerical methods based on DNN, as we shall describe
now, are a family of numerical methods that require no grids at all.
This is reminiscent of the ``mesh-less method'' that have been much
studied in recent years
\cite{liu2002mesh,yagawa1996free,idelsohn2003meshless}.  But the
mesh-less method still requires the use of discretization points.  The
{\it DNN-Galerkin} method (as we shall call), namely the Galerkin
version of the DNN-element method such as \eqref{galerkin}, goes one
step further: it does not even need any discretization points!  It is
a totally point-free method!

Let us now give a brief discussion on the error estimate for the
DNN-Galerkin method.  We first recall a classic result by
\cite{barron1993universal} for a DNN with one hidden layer of
$\mathcal O(N)$ DOFs ( i.e. $\mathcal O(\frac{N}{d})$ neurons),
\begin{equation}
	\label{errorshallow0}
	\inf_{v\in {\rm DNN}_{1}^{\frac{N}{d}}(\Omega)}\|u- v\|_{0,2, \Omega} \lesssim
	\left(\frac{N}{d}\right)^{-\frac{1}{2}}
	C_u,
\end{equation}
here we have
\begin{equation}\label{C_uBarron}
	C_u := \int_{\mathbb R^d}|\omega|_{\Omega}|\hat u(\omega)|d\omega,
\end{equation}
where $\hat u$ is the Fourier transform of any extension of the
original function defined in $\Omega$ to the entire space $\mathbb
R^d$.  Here we need to point that $C_u$ might scale with dimension $d$.
The dependence on $d$ is improved by \cite{kurkova2002comparison, mhaskar2004tractability}. Especially, 
\cite{mhaskar2004tractability} improve this constant to be polynomial in $d$.

\subsection{An 1D example: a two point-boundary value problem}
As a proof of concept, let us discuss a very simple one dimensional
example. We focus on the following model problem:
\begin{equation}\label{eq:1d}
	\begin{split}
		-u''(x) = f,&\qquad x\in (0,1).\\
		\qquad u(0)=u(1) = 0.&
	\end{split}
\end{equation}

The exact solution $u\in H^1(0,1)$ satisfies that
\begin{equation}\label{eq:1d:exat}
	u = \mathop{\arg\min}_{v\in H_0^1(0,1)} E(v),
\end{equation}
where
$$
E(v) = \int_{0}^{1}(\frac{1}{2}|v(x)'|^2 -fv(x))dx.
$$

Given a grid
$$
\mathcal{T}_N : 0 = t_{0} <t_1< ... <t_{N+1} = 1.
$$
We define the space of $\relu$  DNNs with one hidden layer as follows:
$$
U = \{ u(x;t,\theta)|u(x;t,\theta) = \sum_{i=0}^{N} (\theta_{i+1}-\theta_i)\relu(x-t_i) \},
$$
where $\theta_i$ is the slope of piecewise linear function in
$[t_{i-1},t_i]$. In order to satisfy the condition $u(1;t,\theta)=0$, we have the constraint
$$
\theta_0 = 0,\quad \sum_{i=0}^{N+1}\theta_{i+1}(t_{i+1}-t_i)=0.
$$
We minimize the energy norm
$$
(t,\theta) = \mathop{\arg \min}_{t,\theta}   \int_{0}^{1}(\frac{1}{2}|u'(x;t,\theta)|^2 -fu(x;t,\theta))dx,\qquad u(x;t,\theta)\in U.
$$
where $t = (t_0,t_1,...,t_{N+1}),\theta =
(\theta_0,\theta_1,...,\theta_{N+1})$. We do the alternate iteration
as below,
\[
\begin{split}
\theta^{k+1} & =\mathop{\arg\min}_{\theta}\int_{0}^{1}\left(\frac{1}{2}|u'(x;t^k,\theta)|^2 -fu(x;t^k,\theta)\right)dx,\\
t^{k+1}      & =t^{k} - \eta  \nabla_{t} \left(\int_{0}^{1}\left(\frac{1}{2}|u'(x;t,\theta^{k+1})|^2 -fu(x;t,\theta^{k+1})\right)dx\right),
\end{split}
\]
where $\eta$ is the step-length.  Once $t$ is fixed, the minimization
problem is a quadratic optimization, which is the traditional finite element method. So we solve the FEM solution $u(x;t^k,\theta^{k+1})$ on grid $t$ and then compute the slope $\theta_i$ on each $[t_{i-1},t_i]$.

\begin{algorithm}
	\caption{Simulation $1D$ PDE}\label{algorithm:1dpde}  
	\begin{algorithmic}
		\STATE \textbf{Data: }{Grid $t$, Max iteration step $M$}.
		\STATE \textbf{Result: }{Optimal solution $u(x;t^*,\theta^*)$}. 
		\STATE Solve $\theta$ on the grid $t$;
		\WHILE {$k\leq M$}
		\STATE	$g = \nabla_{t} \left(\int_{0}^{1}\left(\frac{1}{2}|u'(x;t,\theta)|^2 -fu(x;t,\theta)\right)dx\right)$;
		\STATE		Find $\eta$ by line search;
		\STATE		$t\leftarrow t - \eta g $;
		\STATE		Solve $\theta$ on the grid $t$;
		\STATE		$k\leftarrow k+1$;
		\ENDWHILE
	\end{algorithmic}
\end{algorithm}
We choose the exact solution as
$$
u(x) = x(e^{-(x-\frac{1}{3})^2/K}-e^{-\frac{4}{9}/K}),
$$
with $K=0.01$.
In this numerical experiment,  the learning rate $\eta=0.5$, the max iteration
step is $200$, and the degrees of freedom $N=53$.

\begin{table}[htbp]
	\centering
	\begin{tabular}{ |c|c|c|c|c|c|c|  }
		\hline
		$N$ & $|u_{uFEM}-u|_1$ & $|u_{AFEM}-u|_{1}$& $|u_{DNN}-u|_{1}$ & $E(u_{uFEM})$ & $E(u_{AFEM})$ & $E(u_{DNN})$ \\
		\hline
		23 & 0.2779 &  0.1375 & 0.1094 & -0.7047 & -0.7338 &   -0.7373\\
		\hline
		37 & 0.1717 & 0.0760 & 0.0663 &  -0.7285 & -0.7404 & -0.7411\\
		\hline
		53 & 0.1193 & 0.0511 & 0.0456 & -0.7362 &  -0.7420 &-0.7422\\
		\hline
	\end{tabular}
	\caption{The $H^1$ semi-norm error and energy}
	\label{table:error}
\end{table}

\begin{figure}[htbp]
	\centering
	\begin{minipage}[t]{0.45\textwidth}
		\centering
		\includegraphics[width=6cm]{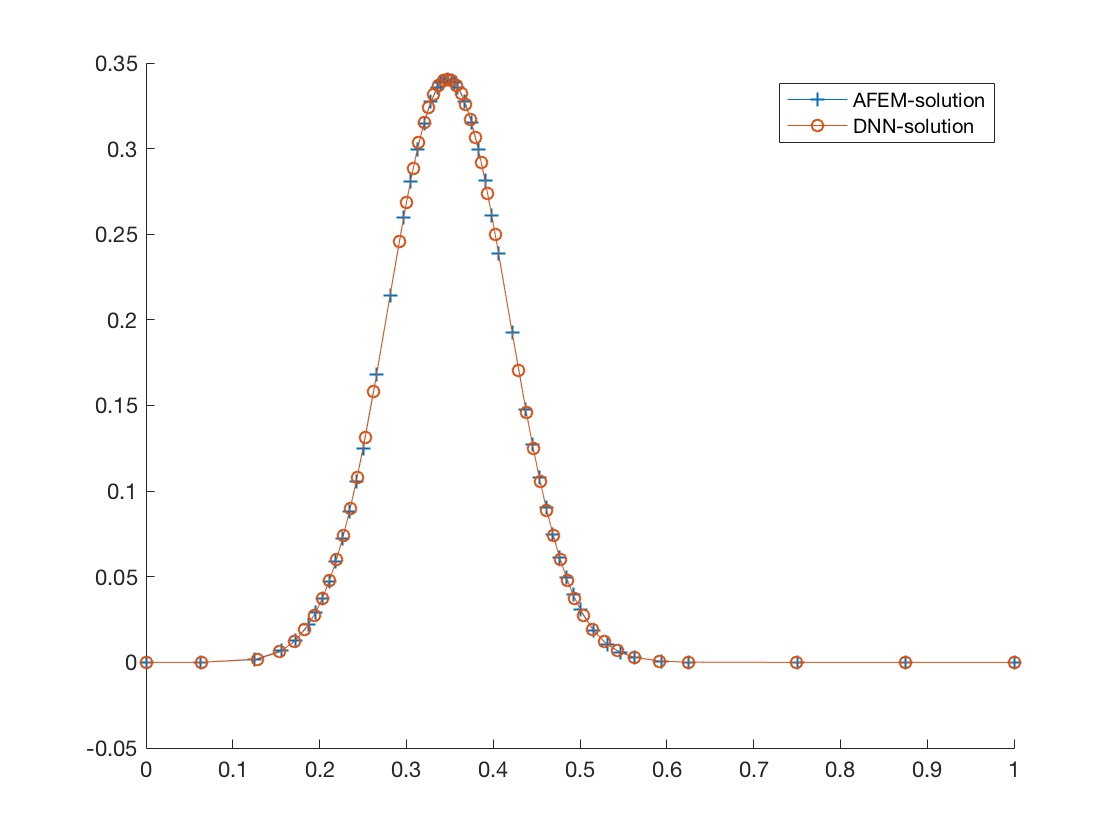}
	\end{minipage}
	\begin{minipage}[t]{0.45\textwidth}
		\centering
		\includegraphics[width=6cm]{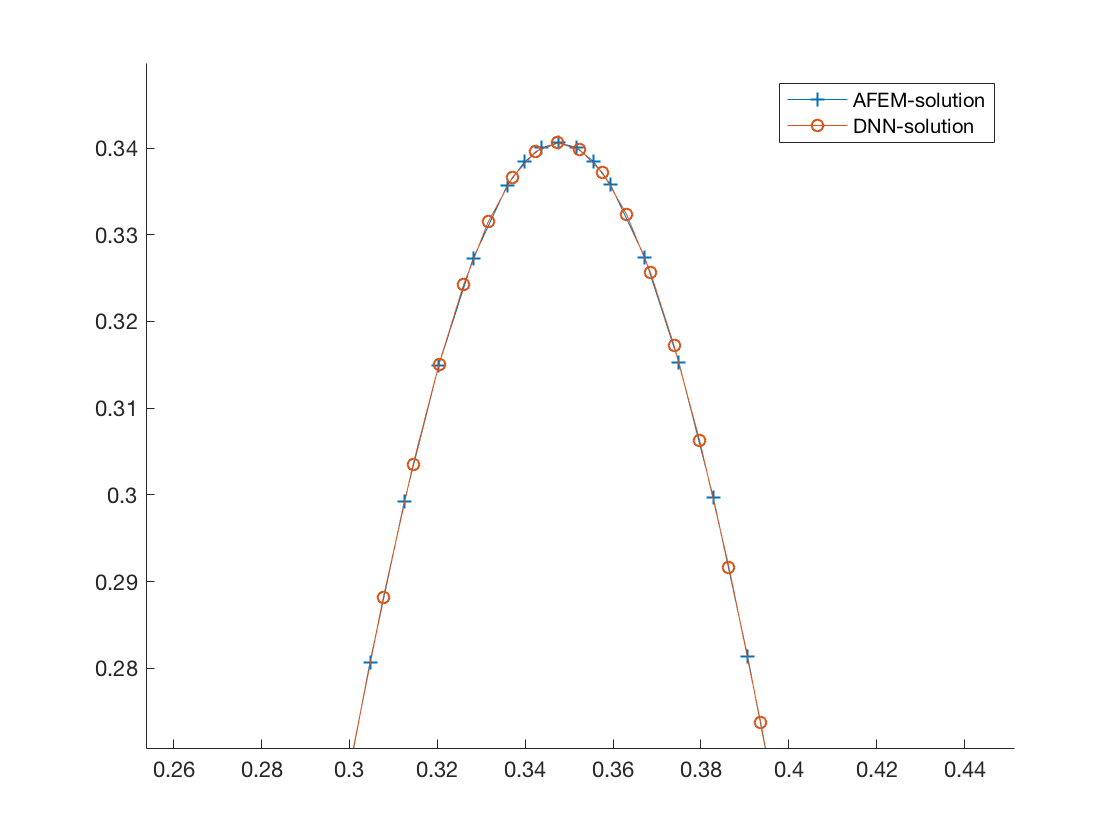}
	\end{minipage}
	\caption{The grid of AFEM and DNN(the right figure is the top of left).}
	\label{fig:1dsimu}
\end{figure}

At the beginning of the simulation, we use the
adaptive finite element method(AFEM) to get the adaptive grid from the
uniform grid. Next we construct DNN solution with the same degrees of
freedom. Then we minimize the energy and get the DNN solution. We
compare the energy and $H^1$ semi-norm error of uniform grid
solution(uFEM), AFEM solution and DNN solution. From Table~\ref{table:error}, the energy and $H^1$ semi-norm of the DNN
solution are smaller than $u_{AFEM}$ and $u_{uFEM}$, which implies that the DNNs can find better solution than AFEM. 
Figure~\ref{fig:1dsimu} shows the two different grid points on the
same graph, we can easily see the grid points are moving.

\section{Conclusion}\label{sec:conclusion}
By relating ReLU DNN models with linear finite element functions, we
provide some theoretical insights on why and how deep neural networks
work.  It is shown that ReLU DNN models with sufficiently many layers
(at least two) can reproduce all the linear finite element functions.
This in some sense provides some theoretical explanation of the
expressive power of deep learning models and also the necessity of
using deep layers in deep learning applications.

Two different approaches are discussed in this paper on the
representation of continuous piecewise linear functions by ReLU DNNs.
The first approach, as proposed in \cite{arora2016understanding} and
described in \S\ref{sec:cpwl}, leads to a DNN representation with a
relatively shallow network with $\lceil\log_2(d+1)\rceil$ hidden
layers but a relatively larger number of neurons.  The second
approach, presented in this paper and described in
\S\ref{sec:FEM_DNN}, leads to a representation that has a relatively
deeper network with $\lceil\log_2 k_h\rceil+1$ hidden layers (see
\eqref{kh}).  Further investigations are needed in the future to
combine these two approaches to obtain a more balanced representation.

The DNN representation of linear finite element functions opens a
door for theoretical explanation and possible improvement on the
application of the quantized weights in a convolution neural networks
(see \cite{yin2018binaryrelax}).  

One theoretically interesting question addressed in this paper
concerns the minimal number of layers that are needed in a DNN model
to reproduce general continuous piecewise linear functions.
Theorem~\ref{lowerbound} provides a partial answer to this question,
namely the minimal number of layers is at least $2$.  As a result, the
number of layers $\lceil\log_2(d+1)\rceil$ as given in
Theorem~\ref{main0} is optimal for $1\le d\le 3$.  It is still an open
question if this number is also optimal for $d\ge 4$.

This paper also briefly touches upon the application of DNN in
numerical solution of partial differential equations, which is a
topic that was investigated in the literature in 1990s and has
attracted much attention recently.  Our focus is on the comparison of
Galerkin type of discretization methods provided by adaptive linear
finite element methods and by deep neural networks.  When the
dimension $d$ is large, asymptotic approximation properties are
compared for these two different approaches in terms of the number of
the dimension $d$ and the number of the degrees of freedom.  When $d$ is
small, we use the simplest case $d=1$ to demonstrate that the deep
neural network would lead to a more accurate Galerkin approximation to
a differential equation solution than the adaptive finite element
method would under the assumption that the degrees of freedom are the
same in both cases but without comparing their computational costs.  This
preliminary study seems to indicate that deep neural network may
provide a potentially viable approach to the numerical solution of
partial differential equations for both high and low dimensions
although the underlying computational cost is a serious issue that may
or may not be properly addressed by further studies in the future.
\newpage

\begin{appendices}
	
\section{Lattice representation}
In this section, we will discuss the lattice representation of CPWL funtions in Theorem~\ref{lattice}. To begin with, let us recall Lemma~\ref{unique}, where we have $M$ subdomains $\{\tilde{\Omega}_k\}$ such that on each subdomain the arrangement of the $m$ local functions are fixed. We denote this kind of domain partition as unique-order region partition.

\begin{lemma}
	\label{1dcase}
	Let $p(t)$ be a continuous piecewise linear function and the unique-order region partition is
	$$0 = t_0 < t_1 < ... < t_{r+1} = 1$$
	assume the linear function on $[t_i,t_{i+1}]$ is $l_i(x) = k_i t+b_i$. If the parameters satisfy
	$$ b_0 > b_r,\qquad k_0 + b_0 > k_r + b_r $$
	which means
	$$l_0(t)> l_r(t)\quad \mbox{on}\quad [t_0,t_1]$$
	$$l_0(t)> l_r(t)\quad \mbox{on}\quad [t_r,t_{r+1}]$$
	Then there exists $l_p(t) = k_p t+ b_p$, such that
	$$b_p \ge b_0,\qquad k_p+b_p\le k_r+b_r$$  
	That is to say,
	$$l_0(t)\le l_p(t)\quad \mbox{on}\quad [t_0,t_1]$$
	$$l_p(t)\le l_r(t)\quad \mbox{on}\quad [t_r,t_{r+1}]$$
\end{lemma}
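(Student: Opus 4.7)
My plan is to construct $l_p$ explicitly as the secant line of the graph of $p$ through the two points $\bigl(t_1, p(t_1)\bigr)$ and $\bigl(t_r, p(t_r)\bigr)$. By continuity of $p$ at the interior nodes, these are exactly $\bigl(t_1, l_0(t_1)\bigr)$ and $\bigl(t_r, l_r(t_r)\bigr)$, so the chord is determined entirely by the data of $l_0$ and $l_r$. A preliminary remark is that the hypothesis $l_0 > l_r$ on $[0,1]$ combined with continuity at $t_1$ forces $r \ge 2$: otherwise continuity would give $l_0(t_1) = l_1(t_1) = l_r(t_1)$, contradicting the strict inequality. Hence $t_1 < t_r$, and the chord is well defined with slope $k_p = \bigl(l_r(t_r) - l_0(t_1)\bigr)/(t_r - t_1)$ and intercept making it pass through $\bigl(t_1, l_0(t_1)\bigr)$.

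The core of the argument is then to verify the two scalar inequalities $b_p \ge b_0$ and $k_p + b_p \le k_r + b_r$. Since $l_p - l_0$ is linear and vanishes at $t_1$ by construction, the first is equivalent to the slope condition $k_p \le k_0$, which after one line of algebra reduces to $l_r(t_r) \le l_0(t_r)$. Similarly, since $l_r - l_p$ is linear and vanishes at $t_r$, the second is equivalent to $k_p \le k_r$, which reduces to $l_r(t_1) \le l_0(t_1)$. Both reduced inequalities are immediate from the hypothesis that $l_0 > l_r$ pointwise on $[0,1]$, so both constraints are met simultaneously by this single chord.

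The interval forms of the conclusion then come for free from linearity: $l_p - l_0$ is a linear function that is nonnegative at $t_0$ (just proved) and vanishes at $t_1$, so it is nonnegative throughout $[t_0, t_1]$; the argument for $l_p \le l_r$ on $[t_r, t_{r+1}]$ is symmetric. I expect the main obstacle to be recognizing which two points to anchor the chord on: the more obvious candidates such as the secant from $\bigl(0, l_0(0)\bigr)$ to $\bigl(1, l_r(1)\bigr)$, or a horizontal shift of $l_0$, both fail the interval inequalities. Once the chord is anchored at $\bigl(t_1, l_0(t_1)\bigr)$ and $\bigl(t_r, l_r(t_r)\bigr)$, the required slope bounds line up exactly with the comparisons $l_r(t_r) \le l_0(t_r)$ and $l_r(t_1) \le l_0(t_1)$ that are directly available from the hypothesis, and the rest is elementary.
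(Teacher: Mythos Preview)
Your secant construction is internally sound and does produce a linear function satisfying all four inequalities, but it misses the point of the lemma: the $l_p$ being asserted is meant to be one of the \emph{pieces} $l_0,\dots,l_r$ of $p$, not an arbitrary affine function. The subscript notation already hints at this, and the paper's own proof makes it explicit by taking $p$ to be the index realising $k_p=\min_i k_i$. More decisively, look at how the lemma is invoked in the proof of Theorem~\ref{lattice}, case~(2b): one needs an index $t\notin\{k,k'\}$ with $l_t\ge l_k$ on $\tilde\Omega_k$ and $l_t\le l_{k'}$ on $\tilde\Omega_{k'}$, precisely so that $t\in s_k$ and hence $\Phi_k(x)=\min_{i\in s_k}l_i(x)\le l_t(x)$. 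A chord that is not one of the local linear functions cannot enter the index set $s_k$, so your conclusion would be useless there. (If $l_p$ were allowed to be arbitrary the statement would be nearly vacuous and would not need the continuity of $p$ or the partition at all.)

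Your chord happens to coincide with a piece when $r=2$ (it is then forced to equal $l_1$), but for $r\ge 3$ it generically does not. The paper instead chooses $p=\arg\min_i k_i$ and uses continuity at the nodes to write $b_p=b_0+\sum_{i=0}^{p-1}(k_i-k_p)\Delta t_i\ge b_0$, with a symmetric telescoping bound for $k_p+b_p\le k_r+b_r$. The hypotheses then force $k_p\neq k_0$ and $k_p\neq k_r$, so $p\notin\{0,r\}$, which is exactly the ``$t\ne k,k'$'' required downstream. Your remark that $r\ge 2$ is correct and is implicitly used in the paper as well.
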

\begin{figure}[th]
	\centering
	\includegraphics[width=0.7\linewidth]{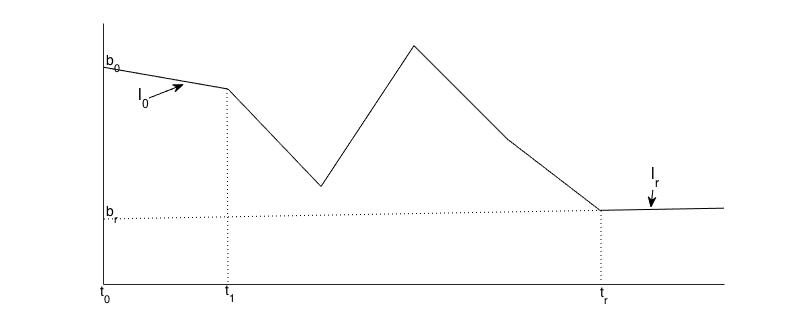}
\end{figure}

\begin{proof}
	
	Let $k_p=\min_i\{k_i\}$, $\Delta t_i=t_{i+1}-t_i$. 
	
	Since here we only involve linear functions, so we can represent each point $(t_i,y_i)$ by using $k_i$'s and $\Delta t_i$'s.
	
	Then the point $(t_p,b_0+\sum_{i=0}^{p-1}k_i\Delta t_i)$ is on $y=k_p t+b_p$, and we can write $b_p$ as following:
	$$b_p=b_0+\sum_{i=0}^{p-1}k_i\Delta t_i-k_p t_p=b_0+\sum_{i=0}^{p-1}(k_i-k_p)\Delta t_i$$
	Since here $k_p$ is the minimum, we have:
	$$b_p=b_0+\sum_{i=0}^{p-1}(k_i-k_p)\Delta t_i\ge b_0$$		
	And
	\begin{equation*}
	\begin{aligned}
	k_p+b_p&=k_p+b_0+\sum_{i=0}^{p-1}(k_i-k_p)\Delta t_i\\
	k_r+b_r&=k_r+b_0+\sum_{i=0}^{r-1}(k_i-k_p)\Delta t_i\\
	(k_r+b_r)-(k_p+b_p)&=k_r-k_p+\sum_{i=p}^{r-1}(k_i-k_p)\Delta t_i\ge 0
	\end{aligned}
	\end{equation*}
	which means we find the desired pair of $k$ and $p$.
	
	Notice here $k_p\ne k_0$ and $k_p\ne k_r$ by the assumptions in the lemma.
	
\end{proof}

\bigskip

The proof of Theorem~\ref{lattice} is as below.

\begin{proof}
	
	In each $\tilde \Omega_{k}$, consider the functions lie completely above $l_k$, and define the convex polynomial
	$$\Phi_{k}=\min_{i\in s_k} l_i$$
	here $s_k=\{i: l_i\ge l_k\ \mathrm{on}\ \tilde\Omega_{k}\}$.

	It's easy to see that in each $\tilde\Omega_{k}$, $\Phi_{k}=l_k=f$. Then define
	$$\Phi=\max_{k} \Phi_{k}$$
	and next we show that $\Phi_{k}(x)\le f(x)$ for all x and every $k$:

	For any fixed $k$, $\forall x\in\mathbb{R}^n$, if $x\in\tilde\Omega_{k}$, then $\Phi_{k}(x)=l_k(x)= f(x)$.
	
	If $x\notin\tilde\Omega_{k}$, then we suppose $x\in \tilde\Omega_{k'}$, so $f(x)=l_{k'}(x)$. Notice that here we have unique-order region, thus in each $\tilde\Omega_i$, the order of $l_k$ and $l_{k'}$ is fixed. 
	
	There're several situations:
	\begin{itemize}
		\item[(1)] $l_{k'}\ge l_{k}$ in $\tilde\Omega_{k}$, then $\Phi_{k}(x)\le l_{k'}(x)=f(x)$.
		\item[(2)] $l_{k'}< l_{k}$  in $\tilde\Omega_{k}$, then we consider the domain $\tilde\Omega_{k'}$:
		\begin{itemize}
			\item[(2a)]
			
			$l_{k'}\ge l_k$ in $\tilde\Omega_{k'}$. In this case we have:
			\begin{equation*}
			\Phi_{k}(x)\le l_k(x)\le l_{k'}(x)=f(x)
			\end{equation*}
			\item[(2b)]
			$l_{k'}<l_{k}$  in $\tilde\Omega_{k'}$. We take $x\in \tilde\Omega_{k}^\circ, x' \in \tilde\Omega_{k'}^\circ$. Then we have a path $L(\theta)$, the coordinate of the path is defined as $(x + \theta(x'-x),f(x + \theta(x'-x)))$ with $\theta\in[0,1]$(see Figure~\ref{parameterization}). It is just a piecewise linear function with the parameter $\theta$. Notice that the domain partition is now unique-order. So if we want to compare the order of the linear function,  we just compare one point value in that region. 
			\begin{figure}[th]
				\label{parameterization}
				\centering
				\includegraphics[width=0.7\linewidth]{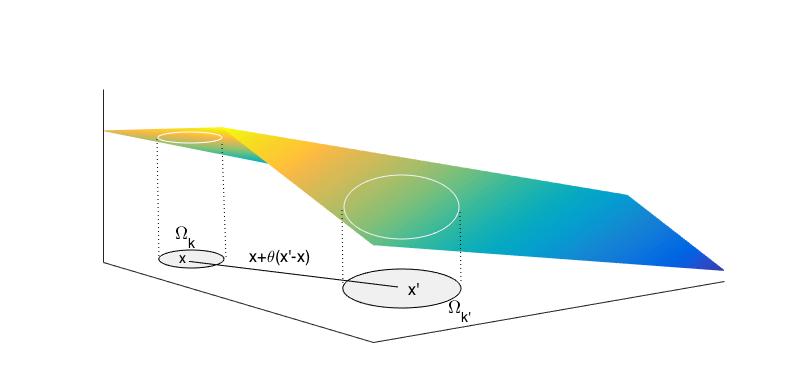}
				\caption{Parameterization of the path.}
			\end{figure}

			Then by Lemma \ref{1dcase}, there must exist $l_t$ with $t\ne k,k'$ and
			\begin{equation*}
			\begin{aligned}
			&l_t\le l_{k'}\quad \mathrm{on}\ \tilde\Omega_{k'}\\
			&l_t\ge l_{k}\quad \mathrm{on}\ \tilde\Omega_{k}
			\end{aligned}
			\end{equation*}
			Then we should have:
			$$\Phi_{k}(x)\le l_t(x)\le l_{k'}(x)=f(x)$$

			Thus for every $\Phi_{k}$, we have $\Phi_{k}(x)\le f(x)$ for all $x$. So if we take the maximum over all these functions, we should have:
			$$f(x)=\max_{k}\min_{i\in s_k}\{l_i\}$$
			This is exactly the desired form. Here $|s_k|\le m$, and the number of $\Phi_{k}$ depends on the domain partition we do. 
		\end{itemize}
	\end{itemize}
\end{proof}

The drawback of this representation is that the number of $M$ may be too large, so we want to deal with other domain partitions, for example, partitions that produce a set of convex regions. The following theorem is an improvement of Theorem ~\ref{lattice}.

\bigskip
\begin{theorem}
	Let $f:\mathbb{R}^d\to\mathbb{R}$ be a continuous piecewise linear function with finite pieces defined by the distinct local linear functions $l_i$, $1\le i\le m$. Let $\{C_k\}$ be a set of $S$ convex regions of $f$ that determine a domain partition, so that the function $f$ is described in each region $C_k$ by the same linear function $l_k$. Then there exist finite non-empty subsets of $\{1,2,\dots,m\}$, say $s_k$, $1\le k\le S$, such that 
	\begin{equation}
	f(x)=\max_{1\le k\le S}\{\min_{i\in s_k} l_i\}
	\end{equation}
\end{theorem}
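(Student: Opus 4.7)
The plan is to mirror the argument for Theorem~\ref{lattice}. For each convex region $C_k$, set
\[
s_k=\{i\in\{1,\dots,m\}: l_i\ge l_k\text{ on }C_k\},\qquad \Phi_k=\min_{i\in s_k}l_i,
\]
and show $f=\max_{1\le k\le S}\Phi_k$. The lower bound $\max_k\Phi_k\ge f$ is immediate: $k\in s_k$ and $l_i\ge l_k$ on $C_k$ for every $i\in s_k$ together give $\Phi_k\equiv l_k\equiv f$ on $C_k$, so the inequality holds pointwise by choosing $k$ to index the region containing the point.

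For the upper bound I must show $\Phi_k\le f$ for every $k$. Fix $x\in\mathbb{R}^d$, let $C_j$ be the region containing $x$ (so $f(x)=l_j(x)$), and look for $i\in s_k$ with $l_i(x)\le l_j(x)$. Two easy subcases dispatch themselves: if $l_j\ge l_k$ on $C_k$ then $j\in s_k$ works; and if $l_k(x)\le l_j(x)$ then $k$ (which is always in $s_k$) works. The nontrivial subcase combines $l_k(x)>l_j(x)$ with the existence of $y_0\in C_k$ where $l_k(y_0)>l_j(y_0)$. Since $l_k-l_j$ is affine and strictly positive at the two endpoints of $[y_0,x]$, it is strictly positive on the whole segment. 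Restricting $f$ to this segment yields a one-dimensional CPWL function whose first piece is $l_k$ and whose last piece is $l_j$, with $l_k>l_j$ at both endpoints, and Lemma~\ref{1dcase} then produces an intermediate piece $l_t$ (with $t\neq k,j$) satisfying $l_t(y_0)\ge l_k(y_0)$ and $l_t(x)\le l_j(x)$.

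The main obstacle is promoting the pointwise inequality $l_t(y_0)\ge l_k(y_0)$ to the global inequality $l_t\ge l_k$ on all of $C_k$, which is what is required in order to conclude $t\in s_k$. In the unique-order setting of Theorem~\ref{lattice} this upgrade is automatic, since the pairwise order of any two local functions is fixed throughout a unique-order region; here only convexity of $C_k$ is available. My plan to recover this step is to exploit that the affine function $l_t-l_k$ is nonnegative on the convex set $C_k$ if and only if it is nonnegative at every extreme point of $C_k$. Accordingly, I would apply Lemma~\ref{1dcase} not only at $y_0$ but at every extreme point $v$ of $C_k$ where $l_k(v)>l_j(v)$, obtain candidate indices $t(v)$, and then argue by finiteness: whenever the current candidate $t$ fails at a new vertex $v'$, the 1-D lemma applied to $[v',x]$ delivers a genuinely new index from the finite pool $\{1,\dots,m\}$, so the process must terminate after at most $m$ steps with a single $t\in s_k$ satisfying $l_t(x)\le l_j(x)$. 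Extreme points of $C_k$ on which $l_k\le l_j$ are handled by applying the same construction to the (still convex) subset $C_k\cap\{l_k>l_j\}$ in place of $C_k$, to which the hypotheses of Lemma~\ref{1dcase} apply directly. Once such a $t$ is exhibited, $\Phi_k(x)\le l_t(x)\le l_j(x)=f(x)$, closing the upper bound.
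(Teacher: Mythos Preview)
Your setup of $s_k$ and $\Phi_k$ and the lower bound $\max_k\Phi_k\ge f$ are correct, and you have correctly isolated the real obstacle: in the nontrivial subcase, Lemma~\ref{1dcase} applied to the segment $[y_0,x]$ only yields $l_t(y_0)\ge l_k(y_0)$ at the single point $y_0$, whereas membership $t\in s_k$ requires $l_t\ge l_k$ throughout $C_k$. However, your proposed repair via extreme points and iteration does not close this gap. The assertion that each new application of Lemma~\ref{1dcase} on a segment $[v',x]$ produces a ``genuinely new'' index is only half-justified: from $l_{t'}(v')\ge l_k(v')>l_t(v')$ you get $t'\ne t$, but nothing prevents $t'$ from coinciding with an \emph{earlier} candidate. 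The process can cycle (for instance $t_1$ fails at $v_2$, yielding $t_2$; then $t_2$ fails at $v_1$, yielding $t_1$ again), so finiteness of $\{1,\dots,m\}$ does not force termination. Even granting termination, the terminal index is only verified at the vertices actually visited, not at all extreme points. And since the regions $C_k$ partition $\mathbb{R}^d$, some $C_k$ are unbounded polyhedra, for which nonnegativity of the affine function $l_t-l_k$ is not controlled by extreme points alone (one must also handle recession directions).

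The paper circumvents this difficulty by a reduction rather than a direct attack. It refines each convex region $C_k$ into its unique-order subregions $\tilde\Omega_i$, notes that on every such subregion the active piece is still $l_k$, and then relates $\Psi_k$ to the functions $\Phi_i$ already constructed in Theorem~\ref{lattice}: by lattice properties, $\max_{1\le k\le S}\Psi_k=\max_{1\le i\le M}\Phi_i$, and the right-hand side equals $f$ by Theorem~\ref{lattice}. This completely sidesteps the pointwise-to-regionwide upgrade you are attempting, at the cost of invoking the unique-order machinery one more time.
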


\begin{proof}
	In each $C_k$, define:
	$$\Psi_{k}=\min_{i\in s_k} l_i$$
	here $s_k=\{i: l_i\ge l_k\ \mathrm{on}\ C_{k}\}$.
	Then define:
	$$\Psi=\max_{1\le k\le S}\Psi_k$$
	We should have $f=\Psi$ for all $x$.

	If the arrangement inside a convex region $C_k$ is the same, then the proof should be the same as Theorem ~\ref{lattice}. If not,  the contribution of each region $C_k$ can be considered as the union of the contributions of its unique-order subsets: $\Psi_k=\max_{1\le i\le M_k}\Phi_{i}$, here $\Phi_i$ is defined as in Theorem \ref{lattice}. We can see that $\sum_{k=1}^{S}M_k$ is no less than $M$, the number of unique-order regions. By applying properties of lattices, we have:
	$$\Psi=\max_{1\le k\le S}\Psi_k=\max_{1\le i\le M}\Phi_i$$
	so according to Theorem \ref{lattice}, $\Psi=f$ for all $x$ in the domain.
\end{proof}

	\section{Proof of Lemmas}
	
	In this section, we will show the proofs of the lemmas used in previous sections.
	
	\subsection{Proof of Lemma~\ref{bar-alpha}}
	\begin{proof}
		We have the identity
		\begin{equation}\label{trans}
		\begin{aligned}
		\max\{f,g,\alpha g+h\}&=\max\{f,\max\{g,\alpha g+h\}\}\\
		&=\max\{f,\max\{g-\bar{\alpha}h,\alpha g+h-\bar{\alpha}h\}+\bar{\alpha}h\}\\
		&=\max\{f,\max\{g-\bar{\alpha}h,\alpha (g-\bar{\alpha}h)\}+\bar{\alpha}h\}
		\end{aligned}
		\end{equation}
		
		When $\alpha>1$, if $g-\bar{\alpha}h\ge0$, then (\ref{trans}) becomes
		$$
		\max\{f,\max\{1,\alpha\}\max\{g-\bar{\alpha}h,0\}+\bar{\alpha}h\}
		$$
		and the right hand side (RHS) of (\ref{fir}) becomes the same.
		
		If $g-\bar{\alpha}h\le0$, then (\ref{trans}) becomes 
		$$
		\max\{f,\min\{1,\alpha\}\min\{g-\bar{\alpha}h,0\}+\bar{\alpha}h\}
		$$
		and the RHS of (\ref{fir}) is also this expression.
		
		For $\alpha<1$, similarly, the identity (\ref{fir}) always holds. Further, if consider the cases $\alpha>1$, $0<\alpha<1$ and $\alpha<0$ respectively, we have the following important identity:
		
		\begin{equation}
		\max\{f,g,\alpha g+h\}=	\sigma_1\max\{f,g,\bar{\alpha}h\}+\sigma_2\max\{f,\alpha g+h,\bar{\alpha}h\}+\sigma_3\max\{f,\bar{g}\}
		\end{equation}
		here $\sigma_k\in\{1,-1\}$ and $\bar{g}$ is one of $g$, $\alpha g+h$ or $\bar{\alpha}h$.
	\end{proof}
	
	\subsection{Proof of Lemma~\ref{reduce}}
	\begin{proof}
		Let $l_i(x)=a_{i0}+\bar{a}_i^T x$ with $a_{i0}\in\mathbb{R}$ and $\bar{a}_i\in\mathbb{R}^n$ for $1\le i\le L$. Assume there are at most $\bar{n}$ linearly independent $\bar{a}_i$, $\bar{n}\le n$. Without loss of generality, assume $\bar{a}_1,\dots,\bar{a}_{\bar{n}}$ are linearly independent. Then by basic linear algebra, we know that 
		$$
		l_L=\sum_{j=1}^{\bar{n}}\alpha_j l_j+\alpha_0.
		$$
		Denote
		$$
		\mu(x)=\max\{l_{\bar{n}+1},\dots,l_{L-1}\}.
		$$
		Then
		\begin{equation}\label{1}
		\max\{c_0,l_1,\dots,l_L\}=\max\{c_0,l_1,\dots,l_{\bar{n}},\mu(x),l_L\}.
		\end{equation}
		
		If $\alpha_j = 0$ for each $1\le j\le \bar{n}$, then by taking $\max\{c_0,\alpha_0\}$, we already make the RHS of (\ref{1}) as the RHS of (\ref{goal}). Otherwise, $\exists \alpha_j\ne0$ for some $1\le j\le \bar{n}$, we can assume that $\alpha_{\eta} \neq 0, \alpha_{\eta+1}= ... = \alpha_{\bar{n}}$, so $$l_L=\sum_{j=1}^{\eta}\alpha_j l_j+\alpha_0,\qquad \eta\le \bar{n}.$$
		
		If $\alpha_i=1$ for each $1\le i\le\eta$, we can do the following linear transformation $(l'_1,\dots,l'_{\bar{n}})^T=A(l_1,\dots,l_{\bar{n}})^T$, where
		\begin{equation*}
		\begin{aligned}
		l'_i&=l_i,\qquad &\mathrm{for}\ i\ne\eta,\\
		l'_i&=\sum_{j=1}^{\eta} l_j+\alpha_0,\qquad &\mathrm{for}\ i=\eta.\\
		\end{aligned}
		\end{equation*}
		
		Then (\ref{1}) equals 
		$$
		\max\{c_0,l'_1,\dots,l'_{\bar{n}},\mu(x),-\alpha_0-l'_1-\dots-l'_{\eta-1}+l'_{\eta}\}.
		$$
		in this case, the coefficients of $l'_1,\dots,l'_{\eta-1}$ are not 1.
		
		So we can assume there is at least one $\alpha_i\ne1$ for $1\le i\le\eta$, say $\alpha_{\eta}\ne1$. Let
		\begin{equation*}
		\begin{aligned}
		f&=\max\{c_0,\mu(x),l_1,\dots,l_{\eta-1},l_{\eta+1},\dots,l_{\bar{n}}\},\\
		g&=l_{\eta},\\
		h&=\sum_{j=1}^{\eta-1}\alpha_j l_j+\alpha_0.
		\end{aligned}
		\end{equation*}
		by (\ref{key-reduce}) we have:
		\begin{eqnarray}\label{2}
		\max\{c_0,l_1,\dots,l_{\bar{n}},\mu(x),l_L\}&&=\max\{f,g,\alpha_{\eta} g+h  \}\\ 
		\label{3}
		&&=\sigma_1\max\{f,g,\frac{\sum_{j=1}^{\eta-1}\alpha_j l_j+\alpha_0}{1-\alpha_{\eta}}  \}\\ 
		\label{4}
		&&+\sigma_2\max\{f,\alpha_{\eta} g+h,\frac{\sum_{j=1}^{\eta-1}\alpha_j l_j+\alpha_0}{1-\alpha_{\eta}}   \}\\ 
		\label{5}
		&&+\sigma_3\max\{f,\bar{g} \}.
		\end{eqnarray}
		(\ref{5}) is already the desired form, because now we only take maximum  over $L-1$ linear functions and one constant.
		
		As for the (\ref{3}), notice that now we have eliminated $l_{\eta}$ in the third expression. So continue this procedure, at last we will only have constant in the last expression, by taking maximum of this constant and $c_0$, we can reduce one term in the max expression.
		
		For (\ref{4}), consider the linear transformation $(l''_1,\dots,l''_{\bar{n}})^T=B(l_1,\dots,l_{\bar{n}})^T$:
		\begin{equation*}
		\begin{aligned}
		l''_i&=l_i,\qquad &\mathrm{for}\ i\ne\eta,\\
		l''_i&=\sum_{j=1}^{\eta}\alpha_{j} l_j+\alpha_0,\qquad &\mathrm{for}\ i=\eta.\\
		\end{aligned}
		\end{equation*}
		So (\ref{4}) becomes
		$$
		\max\{c_0,\mu,l''_1,\dots,l''_{\bar{n}},\sum_{j=1}^{\eta-1}\alpha_j l''_j+\alpha_0\}.
		$$
		Then it is the same as (\ref{3}). Follow the same steps as for \eqref{3}, we can achieve the desired result.
	\end{proof}
	
	\begin{figure}[ht]
		\label{Redu}
		\begin{center}
			\begin{tikzpicture}[>=triangle 45,font=\sffamily]
			\node (X)  {(\ref{2})};
			\node (Y) [below left=1cm and 2cm of X]  { (\ref{3})};
			\node (Z) [below right=1cm and 2cm of X] {(\ref{4})};
			\node (O) [below =1 cm of X] {(\ref{5})};
			\node (U) [below left=1cm and 1.2cm of Y] { ...};
			\node (V) [below right=1cm and 1.2cm of Y] {... };
			\node (W) [below =1cm of Y] { };
			\node (U1)[below left=1cm and 1.2cm of Z] {...};
			\node (V1)[below right=1cm and 1.2cm of Z] {...};
			\node (W1) [below =1cm of Z] {};
			\node (T1)[below left=1cm and 0.5cm of U] { };
			\node (T2)[below right=1cm and 0.5cm of U] { };
			\node (T3)[ below =1cm of U] { };
			\node (T4)[ below left=1cm and 0.5cm of V] { };
			\node (T5)[ below right=1cm and 0.5cm of V] { };
			\node (T6)[ below =1cm of V] { };
			\node (T7)[below right=1cm and 0.5cm of U1] { };
			\node (T8)[ below left=1cm and 0.5cm of U1] { };
			\node (T9)[below =1cm of U1] { };
			\node (T10)[below right=1cm and 0.5cm of V1] { };
			\node (T11)[ below left=1cm and 0.5cm of V1] { };
			\node (T12)[below =1cm of V1] { };
			\draw [semithick,->] (X) -- (Y);
			\draw [semithick,->] (X) -- (Z);
			\draw [semithick,->] (X) -- (O);
			\draw [semithick,->] (Y) -- (U);
			\draw [semithick,->] (Y) -- (V);
			\draw [semithick,->] (Y) -- (W);
			\draw [semithick,->] (Z) -- (U1);
			\draw [semithick,->] (Z) -- (V1);
			\draw [semithick,->] (Z) -- (W1);
			\draw [semithick,->] (U) -- (T1);
			\draw [semithick,->] (U) -- (T2);
			\draw [semithick,->] (U) -- (T3);
			\draw [semithick,->] (V) -- (T4);
			\draw [semithick,->] (V) -- (T5);
			\draw [semithick,->] (V) -- (T6);
			\draw [semithick,->] (U1) -- (T7);
			\draw [semithick,->] (U1) -- (T8);
			\draw [semithick,->] (U1) -- (T9);
			\draw [semithick,->] (V1) -- (T10);
			\draw [semithick,->] (V1) -- (T11);
			\draw [semithick,->] (V1) -- (T12);
			\end{tikzpicture}
			\caption{The process of reducing one term.}
		\end{center}
	\end{figure}
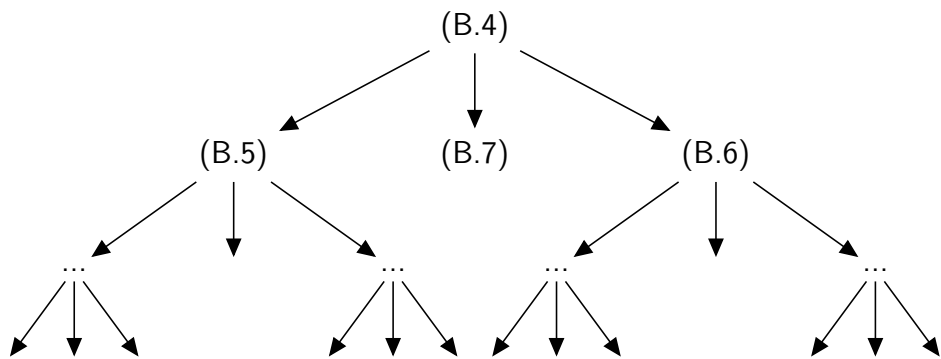
	\begin{remark}
		\label{timesestimate}
		Whenever we eliminate one $l_i$ in the expression of $l_L$, we will gain 3 terms, which is (\ref{3}-\ref{5}). Among these three terms, (\ref{5}) is in desired form, and we need to continue to use (\ref{key-reduce}) for (\ref{3}) and (\ref{4}) until we only have constant. Note that in the proof, $\eta\le n$.  By this procedure, we will gain at most $2^{n+1}-1$ terms (see Figure~\ref{Redu}). 
	\end{remark}
	
	\subsection{Proof of Lemma~\ref{size} }
	\begin{proof}
		We prove the lemma by induction. When $m=1$, the case is trivial. When $m\ge2$, consider $g_1:=\max\{f_1,\dots,f_{\lfloor \frac{m}{2}\rfloor}\}$ and $g_2:=\max\{f_{\lceil \frac{m}{2}\rceil},\dots,f_m\}$. By indcution hypothesis, $g_1$ and $g_2$ can be represented by ReLU DNNs of depths at most $\max\{k_1,\dots,k_{\lfloor \frac{m}{2}\rfloor}\}+\lceil\log({\lfloor \frac{m}{2}\rfloor})\rceil+1$ and $\max\{k_{\lceil \frac{m}{2}\rceil},\dots,k_m\}+\lceil\log({\lceil \frac{m}{2}\rceil})\rceil+1$ respectively, and sizes at most $s_1+\dots+s_{\lfloor\frac{m}{2}\rfloor}+4(2\lfloor\frac{m}{2}\rfloor-1)$ and $s_{\lceil\frac{m}{2}\rceil}+4(2\lceil\frac{m}{2}\rceil-1)$ respectively. Then consider the function $F:\mathbb{R}^n\to\mathbb{R}^2$ defined as $F(x)=(g_1(x),g_2(x))$, this can be represented by a ReLU DNN with depth at most $\max\{k_1,\dots,k_m\}+\lceil\log({\lceil \frac{m}{2}\rceil})\rceil+1$ and size at most $s_1+\dots+s_m+4(2m-2)$.
		
		Since $\max\{x,y\}$ can be represented by a 2-layer ReLU DNN with size 4, we know that $f$ can be represented by a ReLU DNN of depth at most $\max\{k_1,\dots,k_m\}+\lceil\log(m)\rceil+1$  and size at most $s_1+\dots+s_m+4(2m-1)$.
	\end{proof}
\end{appendices}

\end{document}